\documentclass[12pt]{article}
\usepackage{amsmath,amssymb,amsfonts,amsthm}
\usepackage[all]{xy}

\setlength{\parindent}{18pt}
\textwidth14cm
\textheight21.6cm
\newcounter{item}[section]
\newcounter{kirshr}
\newcounter{kirsha}
\newcounter{kirshb}
\newenvironment{enumroman}{\setcounter{kirshr}{1}
\begin{list}{(\roman{kirshr})}{\usecounter{kirshr}} }{\end{list}}
\newenvironment{enumarab}{\setcounter{kirshb}{1}
\begin{list}{(\arabic{kirshb})}{\usecounter{kirshb}} }{\end{list}}
\newtheorem{theorem}{Theorem}[section]

\newtheorem{lemma}[theorem]{Lemma}
\newtheorem{corollary}[theorem]{Corollary}
\theoremstyle{definition}

\newtheorem{example}[theorem]{Example}
\newtheorem{definition}[theorem]{Definition}

\def\C{{\mathfrak{C}}}
\def\Fm{{\mathfrak{Fm}}}

\def\At{{\sf At}}
\def\N{{\cal N}}

\def\Nr{{\mathfrak{Nr}}}

\def\Sg{{\mathfrak{Sg}}}
\def\Fm{{\mathfrak{Fm}}}
\def\A{{\mathfrak{A}}}
\def\B{{\mathfrak{B}}}
\def\C{{\mathfrak{C}}}
\def\D{{\mathfrak{D}}}
\def\M{{\mathfrak{M}}}

\def\CA{{\bf CA}}
\def\QA{{\bf QA}}

\def\QEA{{\bf QEA}}

\def\Df{{\bf Df}}

\def\K{{\bf K}}
\def\K{{\bf K}}

\def\RCA{{\bf RCA}}

\def\Rd{{\mathfrak{Rd}}}
\def\(R)RA{{\bf (R)RA}}

\def\R{\mathbb{R}}

\def\Sc{{\bf Sc}}

 \def\CA{{\sf CA}}
\def\B{{\sf B}}
\def\G{{\sf G}}
\def\w{{\sf w}}
\def\y{{\sf y}}
\def\g{{\sf g}}

\def\r{{\sf r}}
\def\K{{\sf K}}
 \def\Cm{{\mathfrak{Cm}}}
\def\Nr{{\mathfrak{Nr}}}
\def\SNr{{\bf S}{\mathfrak{Nr}}}

\def\restr #1{{\restriction_{#1}}}
\def\cyl#1{{\sf c}_{#1}}
\def\diag#1#2{{\sf d}_{#1#2}}
\def\sub#1#2{{\sf s}^{#1}_{#2}}

\def\Ra{{\mathfrak{Ra}}}
\def\Ca{{\mathfrak{Ca}}}
\def\set#1{\{#1\} }
\def\Ra{{\mathfrak{Ra}}}
\def\Nr{{\mathfrak{Nr}}}
\def\Tm{{\mathfrak{Tm}}}
\def\A{{\mathfrak{A}}}
\def\B{{\mathfrak{B}}}
\def\C{{\mathfrak{C}}}
\def\D{{\mathfrak{D}}}

\def\CA{{\bf CA}}

\def\RCA{{\bf RCA}}
\def\G{{\bf G}}

\def\L{{\mathfrak{L}}}
\def\R{\cal{R}}

\def\PEA{{\bf PEA}}
\def\PA{{\bf PA}}
\def\Mat{{\sf Mat}}

\def\ws{winning strategy}

\def \set#1{\{#1\} }

\def\Nr{{\mathfrak{Nr}}}

\def\Sg{{\mathfrak{Sg}}}
\def\Fm{{\mathfrak{Fm}}}
\def\Rd{{\mathfrak{Rd}}}

\def\CA{{\bf CA}}
\def\RCA{{\bf RCA}}
\def\K{{\bf K}}
\def\L{{\bf L}}

\def\QA{{\bf QA}}
\def\QEA{{\bf QEA}}
\def\(R)RA{{\bf (R)RA}}

\def\R{\mathbb{R}}

\def\Nr{{\mathfrak{Nr}}}

\def\Sg{{\mathfrak{Sg}}}
\def\Fm{{\mathfrak{Fm}}}
\def\Rd{{\mathfrak{Rd}}}

\def\CA{{\bf CA}}
\def\RCA{{\bf RCA}}
\def\K{{\bf K}}
\def\L{{\bf L}}

\def\QA{{\bf QA}}
\def\QEA{{\bf QEA}}

\def\(R)RA{{\bf (R)RA}}

\def\R{\mathbb{R}}

\def\QA{{\bf QA}}
\def\QEA{{\bf QEA}}
\def\PA{{\bf PA}}

\def\R{\mathbb{R}}
\def\N{\mathbb{N}}

\def\R{\mathbb{R}}

\def\Sc{{\bf Sc}}

 \def\CA{{\sf CA}}

\def\M{{\mathfrak{M}}}

\def\At{{\mathfrak{At}}}
\def\GG{\cal {G}}

\def\G{{\mathfrak{G}}}

\def\K{{\bf K}}
\def\QA{{\bf QA}}
\def\QEA{{\bf QEA}}

\def\sub#1#2{{\sf s}^{#1}_{#2}}
\def\cyl#1{{\sf c}_{#1}}
\def\diag#1#2{{\sf d}_{#1#2}}

\def\s{{\sf s}}

\def\pa{$\forall$}
\def\pe{$\exists$}

\def\ef{Ehren\-feucht--Fra\"\i ss\'e}

\def\nodes{{\sf nodes}}

\def\restr #1{{\restriction_{#1}}}

\def\A{{\mathfrak{A}}}
\def\B{{\mathfrak{B}}}
\def\C{{\mathfrak{C}}}
\def\D{{\mathfrak{D}}}

\def\Fm{{\mathfrak{Fm}}}
\def\Ra{{\mathfrak{Ra}}}
\def\Nr{{\mathfrak{Nr}}}
\def\F{{\mathfrak{F}}}
\def\CA{{\bf CA}}
\def\RCA{{\bf RCA}}

\def\set#1{ \{#1\}}

\def\Ca{{\mathfrak Ca}}

\def\pe{$\exists$}
\def\pa{$\forall$}
\def\Cm{{\mathfrak Cm}}
\def\Sg{{\mathfrak Sg}}

\def\ls { L\"owenheim--Skolem}
\def\At{{\sf At}}
\def\Id{{\sf Id}}

\def\rng{{\sf rng}}
\def\dom{{\sf dom}}

\def\w{{\sf w}}
\def\g{{\sf g}}
\def\y{{\sf y}}
\def\r{{\sf r}}

\def\cyl#1{{\sf c}_{#1}}
\def\sub#1#2{{\sf s}^{#1}_{#2}}
\def\diag#1#2{{\sf d}_{#1#2}}

\def\d{Dedekind-MacNeille}
\def\K{{\sf K}}
\def\PEA{{\sf PEA}}
\def\swap#1#2{{\sf s}_{[#1, #2]}}

\def\ws{winning strategy}
\def\ef{Ehren\-feucht--Fra\"\i ss\'e}

\def\y{{\sf y}}
\def\g{{\sf g}}
\def\r{{\sf r}}
\def\w{{\sf w}}

\def\Sc{{\sf Sc}}


\def\CA{{\sf CA}}

\def\R{{\sf R}}

\def\RCA{{\sf RCA}}
\def\L{{\mathfrak{L}}}
\def\ls { L\"owenheim--Skolem}
\def\Z{{\mathbb{Z}}}
\def\Uf{{\sf Uf}}
\def\QA{{\sf QA}}
\def\QEA{{\sf QEA}}
\def\Df{{\sf Df}}
\def\PA{{\sf PA}}
\title{Atom canonicity, \d\ completions, neat embeddings and omitting types}
\author{Tarek Sayed Ahmed}

\begin{document}
\maketitle
\begin{abstract} Let $n$ be finite $>2$. We show that any class 
between $S\Nr_n\CA_{n+3}$ and $\sf RCA_n$ is not atom canonical, and any class 
containing the class of completely representable algebras and contained
in $S_c\Nr_n\CA_{n+3}$ is not elementary. We show that 
there is no finite variable universal axiomatization of many 
diagonal free reducts of representable cylindric algebras of dimension $n$,
like the varieties of representable diagonal-free cylindric algebras and Halmos' polyadic algebras (without equality).
We apply our hitherto obtained  algebraic results to show that the omitting types theorem fails for
finite variable fragments of first order logic with and without equality, 
having $n$ variables, even if we count in severely relativized models
as candidates for omitting single non-principle types. Finally, we show that for many cylindric-like algebras, 
like diagonal free cylindric algebras and Halmos' polyadic algebras with and without equality
the class of strongly representable atom structures of finite dimension $>2$ is not elementary.
\noindent  
\end{abstract}

\section{Introduction}

We follow the notation of \cite{1} which is in conformity with that of \cite{HMT1}.
Assume that we have a class of Boolean algebras with completely additive operators for which we have a semantical notion of representability
(like Boolean set algebras or cylindric set algebras).
A weakly representable atom structure is an atom structure such that at least one atomic algebra based on it is representable.
It is strongly representable  if all atomic algebras having this atom structure are representable.
The former is equivalent to that the term algebra, that is, the algebra generated by the atoms,
in the complex algebra is representable, while the latter is equivalent  to that the complex algebra is representable.

Could an atom structure be possibly weakly representable but {\it not} strongly representable?
Ian Hodkinson \cite{Hodkinson}, showed that this can indeed happen for both cylindric  algebras of finite dimension $\geq 3$, and relation algebras,
in the context of showing that the class of representable algebras, in both cases,  is not closed under \d\ completions.
In fact, he showed that this can be witnessed on an atomic algebras, so that
the variety of representable relation algebras algebras and cylindric algebras of finite dimension $>2$
are not atom-canonical. (The complex algebra of an atom structure is
the completion of the term algebra.)
This construction is somewhat complicated using a rainbow atom structure. It has the striking consequence
that there are two atomic algebras sharing the same
atom structure, one is representable the other is not.

This construction was simplified and streamlined,
by many authors, including the author \cite{weak}, but Hodkinson's  construction,
as we indicate below, has the supreme advantage that it has a huge potential to prove analogous
theorems on \d\ completions, and atom-canonicity
for several varieties of algebras
including properly the variety of representable cylindric-like algebras, whose members have a neat embedding property,
such as polyadic algebras with and without equality and Pinter's substitution algebras.
In fact, in such cases atomic {\it representable countable algebras}
will be constructed so that their \d\ completions are outside such varieties.

Restricted to the cylindric algebra case,  we show,
that for $n>2$ finite, there  is representable atomic algebra $\A\in \CA_n$ such that
$\Cm\At\A\notin \SNr_n\CA_{n+3}$ inferring that the varieties
$S\Nr_n\CA_{n+k}$, for $n>2$ finite and for any $k\geq 3,$
are not closed under \d\ completions.
Such results, as illustrated below
will have non-trivial (to say the least)  repercussions on omitting types for finite variable fragments of first order logic with and without 
equality.

Our construction
presented herein model-theoretically, in the spirit of Hodkinson's rainbow construction,
gives a polyadic atomic representable equality algebra of finite dimension $n>2$
such that  the diagonal free reduct
of its completion is not
representable.

Now that we have two distinct classes,
namely, the class of weakly representable atom structures and that of the strongly representable atom structures; 
the most pressing need is to try to
classify them.
Venema proved (in a more general setting) that the former is elementary,
while Hirsch and Hodkinson show that the latter is {\it not} elementary. Their proof is amazing
depending on an ultraproduct of Erdos probabilistic graphs \cite{hirsh}.

We know that there is a sequence of strongly representable atom structures whose ultraproduct is {\it only } weakly representable,
it is {\it not } strongly representable. This gives that the class $\K=\{\A\in \CA_n: \text { $\A$ is atomic and }
\Cm\At\A\in \sf RCA_n\}$ is not elementary, as well.

Here we extend Hirsch and Hodkinson's result to many cylindric-like algebras, 
answering a question of Hodkinson's for $\PA$ and $\PEA$, that was answered also by Bulian and Hodkinson \cite[Theorem 9.7]{bh}.
The proof presented here is based on algebras that are simpler than those used in\cite{bh}, which are 
tailored for a different purpose, namely, proving that,  any first order  axiomatization
of the classes of representable and cylindric algebras of finite dimension
$>2$ must contain 
infinitely many non-canonical sentences.

The algebras we use are based on atom structures of cylindric algebras constructed 
in \cite{hirsh} by noting that these algebras can be endowed with
polyadic operations (in an obvious way) and that they are generated by elements
whose dimension sets do not exhaust the dimension. 
The latter implies that an algebra is representable if and only if
its diagonal free reduct is representable.

Lately, it has become fashionable in algebraic logic to
study abstract algebras that posses  complete representations, witness  \cite{Sayed} for an extensive overview.
A representation of $\A$ is roughly an injective homomorphism from $f:\A\to \wp(V)$
where $V$ is a set of $n$-ary sequences; $n$ is the dimension of $\A$, and the operations on $\wp(V)$
are concrete  and set theoretically
defined, like the Boolean intersection and cylindrifiers or projections.
A complete representation is one that preserves  arbitrary disjuncts carrying
them to set theoretic unions.
If $f:\A\to \wp(V)$ is such a representation, then $\A$ is necessarily
atomic and $\bigcup_{x\in \At\A}f(x)=V$.

Let us focus on cylindric algebras for some time to come.
It is known that there are countable atomic $\RCA_n$s when $n>2$,
that have no complete representations;
in fact, the class of completely representable $\CA_n$s when $n>2$, is not even elementary \cite[Corollary 3.7.1]{HHbook2}.

Such a phenomena is also closely
related to the algebraic notion of {\it atom-canonicity}, as indicated, which is an important persistence property in modal logic
and to the metalogical property of  omitting types in finite variable fragments of first order logic
\cite[Theorems 3.1.1-2, p.211, Theorems 3.2.8, 9, 10]{Sayed}.
Recall that a variety $V$ of Boolean algebras with completely additive operators is atom-canonical,
if whenever $\A\in V$, and $\A$ is atomic, then the complex algebra of its atom structure, $\Cm\At\A$ for short, is also
in $V$.

If $\At$ is a weakly representable but
not strongly  representable, then
$\Cm\At$ is not representable; this gives that $\RCA_n$ for $n>2$ $n$ finite, is {\it not} atom-canonical.
Also $\Cm\At\A$  is the \d\ completion of $\A$, and so obviously $\RCA_n$ is not closed under \d\ completions.

On the other hand, $\A$
cannot be completely  representable for, it can be shown without much ado,  that
a complete representation of $\A$ induces a representation  of $\Cm\At\A$ \cite[Definition 3.5.1, and p.74]{HHbook2}.

Finally, if $\A$ is countable, atomic and has no complete representation
then the set of co-atoms (a co-atom is the complement of an atom), viewed in the corresponding Tarski-Lindenbaum algebra,
$\Fm_T$, as a set of formulas, is a non principal-type that cannot be omitted in any model of $T$; here $T$ is consistent
if $|A|>1$. This last connection  was first established by the  author leading up to \cite{ANT} and more, see e.g \cite{HHbook2}.

The reference \cite{Sayed} contains an extensive discussion of such notions.

In the context of cylindric algebras,  closure under complete neat embeddings and complete representability
was proved equivalent for countable atomic algebras by the author  \cite{Sayedneat, Sayed}.
The characterization also works for relation algebras, using the same method, which is a Baire category argument at heart
\cite{1}; later re-proved by Robin Hirsch using games \cite{r}.  It was also proved that all three conditions cannot be omitted;
atomicity, countability and complete embeddability.
There are examples, that show that such conditions cannot be dispensed with.
Hirsch and Hodkinson \cite{HH} prove
that the class of completely representable $\CA_n$s is not elementary, for any $n\geq 3.$
Below we shall strengthen such a result considerably.

We use techniques similar to that of Hirsch's in \cite{r} that deal with relation algebras,  and those of Hirsch and Hodkinson in \cite{HH}
on complete representations. The results in the latter
had to do with investigating the existence of complete representations for both relation and cylindric algebras  and for
this purpose, an infinite (atomic) game that tests complete representability was devised, and such a game was used
on so-called rainbow  algebras. More sophisticated games were also played on rainbow relation algebras in
\cite{r}; to get sharper result on relation algebra reducts of cylindric algebras
and complete representations. We
will use similar games addressing cylindric-like rainbow atom structures of finite dimension $>2$.

In \cite{HH} one game is used to test complete representability (the usual atomic game, a \ws\ for \pe\ in $k$
rounds can be coded in a first order sentence referred to
as the $k$th Lyndon condition).
In  \cite{r} {\it three} games were devised testing different neat embeddability properties.

Here we use only two games adapted to the cylindric paradigm. This suffices for our purposes.
The main result in \cite{HH}, namely, that the class of completely representable algebras of finite dimension
$n\geq 3$, is non elementary, follows from the fact that \pe\  cannot win the infinite length
game, but she can win all the finite rounded ones.

To obtain a stronger result on neat embeddings, namely, that any class containing the class of completely
representable  algebras and further 
contained in ${\bf S}_c\Nr_n\CA_{n+3}$ with $n\geq 3$, we use {\it two} distinct games, both having $\omega$ rounds, but one has a limited number of
pebbles, namely, $n+3$, that \pa\ can use, 
but he has the option to re-use them.  Here $S_c$ denotes the operation of forming complete subalgebras.
That is by $S_c\sf K$, where $\sf K$ is a class having a Boolean reduct, 
we understand the class of complete subalgebras of $\sf K$, that is $\A\in S_c\sf K$ if there exists
$\B\in \sf K$ such that $\A\subseteq \B$ and for all $X\subseteq \A$ whenever $\sum^{\A}X=1,$ 
then $\sum^{\B}X=1.$

Summarizing, our main results are:

\begin{enumarab}

\item For any finite $n>2$, any class $\sf K$ such that $\RCA_n\subseteq \sf K\subseteq S\Nr_n\CA_{n+3}$, $\sf K$  
is not atom-canonical, hence not closed under 
\d\ completions. This result is also proved for many cylindric-like algebras like Pinter's substitution algebras and 
Halmos polyadic algebras with and without equality, theorem \ref{can}.
This solves an open problem first announced by Hirsch and Hodkinson in \cite{HHbook} 
and re-appearing 
in the late \cite{1}. In the first reference the question is attributed to N\'emeti and the present author, 
and in the second reference, it appears among the open problems in \cite{Sayed}.

\item For any finite $n>2$, 
any class $\sf K$ such that $\sf K$ contains the class of completely representable $\CA_n$s 
and is contained in $S_c\Nr_n\CA_{n+3},$
$\sf K$ is not elementary; it is  not closed under ultraroots.  This result is also 
proved for the relatives of $\CA$s mentioned in the previous item, theorem \ref{rainbow}.

\item Call an atomic $\CA_n$ strongly representable if its \d\ completion is representable. 
We show that, for finite $n>2$,  there are uncountable atomic algebras in $\Nr_n\CA_{\omega}$ that are not completely representable, 
but any algebra in $\Nr_n\CA_{\omega}$ is strongly 
representable, theorem \ref{complete}.

\item There is no finite variable universal axiomatization for several classes of representable 
diagonal free reducts of cylindric algebras of finite dimension $>2$, like diagonal free cylindric algebras 
and quasi-polyadic algebras, theorem \ref{df}. This solves an old open problem that dates back to the eighties of the last century,
formulated partially in \cite{ST}.

\item Applying the algebraic results in the first two items to show that the omitting types theorem fails for first order logic with
finitely many variables, as long as the number of variables available are $>2$, 
even if we considerably broaden the permissable class of models omitting  single non-principal types, dealing
with clique guarded semantics, theorem \ref{OTT}.

\item Introducing, for each $n>m>2$, $m$ finite,
a new class of $m$ dimensional cylindric algebras denoted by ${\sf CAB}_{m,n}$.
This class is a strict  approximation to the class $\RCA_m$, when $n$ is finite, and it the $\CA$ analogue of relational
algebras embedding into complete and atomic relation algebras having $n$  dimensional
relational bases introduced by Maddux.
In particular, $\bigcap_{k}\sf CB_{m,m+k}=\RCA_m$.
We show that ${\sf CB}_{m,n}$ is a canonical variety for all $n>m$
that is not atom-canonical when $n\geq m+3$ and $m$ finite $>3$, theorems \ref{cann}, \ref{OTT2}.

\item For $2<n<\omega$ and $\mathcal{T}$ be any signature between $\Df_{n}$
and $\PEA_{n}$,  the class of strongly representable atom
structures of type $\mathcal{T}$ is not elementary, theorem \ref{el}.
This reproves a result of Bu and Hodkinson's \cite{bh}, except that we believe our proof is simpler. 
Though atomicity and complete additivity of an operator in
a Boolean algebra with operators are first order notions, we show that for any finite $n>2$ and 
any $\sf K$ between $\Df $ and $\PEA$,
the class of strongly representable $\sf K_n$s is not elementary.  More precisely, we show 
that there is a sequence of completely additive $\K_n$s
$(\A_i: i\in \omega)$ such that $\Cm\At\A_i\in {\sf RK}_{n}$ for each $i\in \omega$, but for any non-principal ultrafilter
on $\omega$,  $\Cm\At(\Pi_{i/U}\A_i)$ is not representable, theorem \ref{proper}.

\end{enumarab}

\section{Notation and Preliminaries}

We follow, as stated above, the notation in \cite{1}. But, for the reader's convenience,
we include the following list of notation that will be used throughout the  paper.
Other than that our notation is fairly standard or self explanatory.
Unusual notation will be explained at  its first occurrence in the text.

An \textit{ordinal} $\alpha$ is transitive set (i.e., any member of $\alpha$ is
also a subset of $\alpha$) that is well-ordered by $\in$. Every
well-ordered set is order isomorphic to a unique ordinal. For
ordinals $\alpha, \beta$, $\alpha < \beta$ we means $\alpha \in
\beta$. An ordinal is therefore the set of all smaller ordinals, so
for a finite ordinal $n$ we have $n = \{ 0,1, \dots, n-1 \}$ and the least infinite ordinal is
$\omega = \{ 0,1, 2, \dots \}$.

A \textit{cardinal} is an ordinal not in bijection with any smaller
ordinal, briefly an {\it initial ordinal} and the cardinality $|X|$ of a set $X$ is the unique
cardinal in bijection with $X$. Cardinals are ordinals and are
therefore ordered by $<$ (i.e., $\in$). The first few cardinals are
$ 0 = \phi, 1, 2, \dots, \omega$ (the first infinite ordinal),
$\omega_1$ (the first uncountable cardinal). A set will be said to be
\textit{countable} if it has cardinality $\leq \omega$,
\textit{uncountable} otherwise, and \textit{countable infinite} if
it has cardinality $\omega$. $2^{\omega}$ denotes the power of the continuum. 

For a set $X$,  $\wp(X)$ denotes the set of all subsets of $X$, i.e. the powerset of $X$.
Ordinals will be identified with the set of smaller ordinals.
In particular,  for finite $n$  we have $n=\{0,\ldots, n-1\}$.
$\omega$ denotes the least infinite
ordinal which is the set of all finite ordinals.
For two given sets $A$ and $B$, ${}^AB$ denotes the set of functions from $A$ to $B$,
and $A\sim B=\{x\in A: x\notin B\}.$
If $f\in {}^AB$ and $X\subseteq A$ then $f\upharpoonright X$
denotes the restriction of $f$
to $X$. We denote by $\dom f$ and $\rng f$ the domain and range of a given function
$f$, respectively.
We frequently identify $f$ with the sequence $\langle f_x:x \in \dom f\rangle$.
We write $fx$ or $f_x$ or $f(x)$ to denote the value of $f$ at $x$.
We define composition so that the righthand function acts first, thus
for given functions $f,g$, $f\circ g(x)=f(g(x))$, whenever the left hand side is defined, i.e
when $g(x)\in \rng f$.

For a non-empty set $X$, $f(X)$ denotes the image of $X$ under $f$, i.e
$f(X)=\{f(x):x\in X\}.$
$|X|$ denotes the cardinality of $X$ and $Id_X$, or simply $Id$ when $X$ is
clear from context, denotes the identity function on $X$.
A set $X$ is countable if $|X|\leq \omega$; if $X$ and $Y$ are sets then $X\subseteq_{\omega}Y$ denotes that $X$
is a finite subset of $Y$.

Algebras will be denoted by
Gothic letters, and when we write $\A$ then we will be tacitly assuming
that $A$ will denote  the universe
of $\A$.
However, in some occasions we will identify (notationally)
an algebra and its universe.

Fix some ordinal $n\geq 2$.
For $i, j<n$ the replacement $[i/j]$ is the map that is like the identity on $n$, except that $i$ is mapped to $j$ and the transposition
$[i, j]$ is the like the identity on $n$, except that $i$ is swapped with $j$.     A map $\tau:n\rightarrow n$ is  finitary if
the set $\set{i<n:\tau(i)\neq i}$ is finite, so  if $n$ is finite then all maps $n\rightarrow n$ are finitary.
It is known, and indeed not hard to show, that any finitary permutation is a product of transpositions
and any finitary non-injective map is a product of replacements.

The standard reference for all the classes of algebras to be dealt with is  \cite{HMT2}.
Each class in  $\set{\Df_n, \Sc_n, \CA_n, \PA_n, \PEA_n, \sf QEA_n, \sf QEA_n}$ consists of Boolean algebras with extra operators,
as shown in figure~\ref{fig:classes}, where $\diag i j$ is a nullary operator (constant), $\cyl i, \s_\tau,  \sub i j$ and $\swap i j$
are unary operators, for $i, j<n,\; \tau:n\rightarrow n$.

The algebras considered can be conceived
as a generalization from Boolean algebras to algebras of relations of higher rank and algebraic versions of different variants
of first order logic. We start by recalling the concrete versions of such algebras.
Such algebras consist of sets of sequences, and the operations are set-theoretic operations on such sets.
Let $\alpha$ be an ordinal. Let $U$ be a set. For $t, s\in {}^{\alpha}U$ and $i<\alpha$, write $t\equiv_i s$ if $t(j)=s(j)$ for all $i\neq j$.
Then we define for $i,j<\alpha,$ $\tau:\alpha\to \alpha$ and $X\subseteq {}^{\alpha}U$:
\begin{align*}
{\sf c}_iX&=\{s\in {}^{\alpha}U:  \exists t\in X, t\equiv _i s\},\\
{\sf s}_{\tau}X&=\{s\in {}^{\alpha}U: s\circ \tau\in X\},\\
{\sf d}_{ij}&=\{s\in {}^{\alpha}U: s_i=s_j\}.
\end{align*}
$[i/j]$ is the replacement on $\alpha$ that takes $i$ to $j$ and leaves every other thing fixed,
while $[i,j]$ is the transposition interchanging $i$ and $j$.

The extra concrete non-Boolean operations we deal with are as specified above.
For a set $X$, let $\B(X)=(\wp(X), \cup, \cap ,\sim, \emptyset, X)$
be the full Boolean set algebra with universe $\wp(X)$.
Let $\bf S$ be the operation of forming subalgebras,
and $\bf P$ be that of forming products.
\begin{align*}
{\sf RDf}_{\alpha}&={\bf SP}\{(\B(^{\alpha}U), {\sf c}_i)_{i< \alpha}: U \text{ is a set}\}.\\
{\sf RSc}_{\alpha}&={\bf SP}\{(\B(^{\alpha}U), {\sf c}_i, {\sf s}_{[i/j]})_{i,j< \alpha}: U \text{ is a set}\}.\\
{\sf RQA}_{\alpha}&={\bf SP}\{(\B(^{\alpha}U), {\sf c}_i, {\sf s}_{[i/j]}, {\sf s}_{[i,j]})_{i,j< \alpha}: U \text{ is a set}\}.\\
\RCA_{\alpha}&={\bf SP}\{(\B(^{\alpha}U), {\sf c}_i, {\sf s}_{[i/j]})_{i,j< \alpha}: U \text{ is a set}\}.\\
\sf RQEA_{\alpha}&={\bf SP}\{(\B(^{\alpha}U), {\sf c}_i, {\sf s}_{[i/j]}, {\sf s}_{[i,j]})_{i,j< \alpha}: U \text{ is a set}\}.
\end{align*}

All such classes are varieties, so that they are closed under forming homomorphic images, too.
For finite $n$, polyadic algebras are the same as quasi-polyadic algebra and for the infinite
dimensional case we restrict our attention to quasi-polyadic algebras in $\sf QA_n, \QEA_n$.
Each class is defined by a finite set of equation schema. Existing in a somewhat scattered form in the literature, finite
equational axiomatizations defining
$\sf Sc_n, \sf QA_n$ and $\QEA_n$ are given in the appendix of \cite{t}.
For $\CA_n$ we follow the standard axiomatization given in definition 1.1.1 in \cite{HMT1}.

For any operator $o$ of any of these signatures, we write $\dim(o)\; (\subseteq n)$
for the set of  dimension ordinals used by $o$, e.g. $\dim(\cyl i)=\set i, \; \dim (\sub i j)=\set{i, j}$.  An algebra $\A$ in $\QEA_n$
has operators that can define any operator of $\sf QA_n, \CA_n,\;\Sc_n$ and $\sf Df_n$. Thus we may obtain the
reducts $\Rd_{\sf K}\A$ for $\K\in\set{\QEA_n, \sf QA_n, \CA_n, \Sc_n, \sf Df_n}$ and it turns out that the reduct always
satisfies the equations defining the relevant class so $\Rd_{\sf K}\A\in \K$.
Similarly from any algebra $\A$ in any of the classes $\sf QEA_n, \sf QA_n, \sf CA_n, \sf Sc_n$
we may obtain the reduct $\Rd_\Sc\A\in\Sc_n$, which we write as $\Rd_{sc}\A$. We also write
$\Rd_{df}\A$ for the diagonal free reduct of $\A$, so that $\Rd_{df}\A\in \Df_n$.

\begin{figure}
\[\begin{array}{l|l}
\mbox{class}&\mbox{extra operators}\\
\hline
\Df_n&\cyl i:i<n\\
\Sc_n& \cyl i, \s_{[i/j]} :i, j<n\\
\CA_n&\cyl i, \diag i j: i, j<n\\
\PA_n&\cyl i, \s_\tau: i<n,\; \tau\in\;^nn\\
\PEA_n&\cyl i, \diag i j,  \s_\tau: i, j<n,\;  \tau\in\;^nn\\
\QA_n&  \cyl i, \s_{[i/j]}, \s_{[i, j]} :i, j<n  \\
\QEA_n&\cyl i, \diag i j, \s_{[i/j]}, \s_{[i, j]}: i, j<n
\end{array}\]
\caption{Non-Boolean operators for the classes\label{fig:classes}}
\end{figure}
Let $\K\in\set{\QEA, \QA, \CA, \Sc, \Df}$. Then for $n>2$ any ordinal,
${\sf RK}_{n}$ is not axiomatizable by a finite schema; in particular, ${\sf RK}_{n}\neq \K_{n}$.
Let $\A\in \K_n$ and let $2\leq m\leq n$ (possibly infinite ordinals).
The \emph{reduct to $m$ dimensions} $\Rd_m\K_n\subseteq \K_m$ is obtained from $\A$ by discarding all operators with indices $m\leq i<n$.
The \emph{neat reduct to $m$ dimensions}  is the algebra  $\Nr_m\A\in \K_m$ with universe $\set{a\in\A: m\leq i<n\rightarrow \cyl i a = a}$
where  all the operators are induced from $\A$ (see \cite[definition~2.6.28]{HMT1} for the $\CA$ case).

Let $\A\in\K_m, \; \B\in \K_n$.  An injective homomorphism $f:\A\rightarrow \B$ is a \emph{neat embedding} if the range of $f$
is a subalgebra of $\Nr_m\B$.
The notions of neat reducts and  neat embeddings have proved useful in
analyzing the number of variables needed in proofs,
as well as for proving representability results,
via the so-called neat embedding theorems.

A generalized space of dimension $n$ is a set of the form $\bigcup_{i\in I}{}^nU_i$ where $I$ is a non-empty indexing set,
$U_i\neq \emptyset$ for each $i\in I$ and $U_i\cap U_j=\emptyset$ 
for distinct $i,j\in I$. For any such space $V$ of dimension $n$, $\wp(V)$ 
with the Boolean operations of union, intersection and complementation, and cylindrifiers, substitution operators and diagonal elements 
(if in the signature) defined like set algebras is a 
$\K_n$; in fact an $\sf RK_n$. Conversely, for any $\A\in \sf RK_n$ there is a generalized space $V$ and an embedding 
from $f:\A\to \wp(V)$. We call $f$ a {\it a representation of $\A$}.

Let $\A\in \sf K_{n}$ and $f:\A\to \wp(V)$ be a representation of $\A$, where $V$ is 
a generalized space of dimension $n$.

A {\it complete representation} of $\A$ is a representation $f$ satisfying
$$f(\prod X)=\bigcap f[X]$$
whenever $X\subseteq \A$ and $\prod X$ is defined.

The action of the non-Boolean operators in a completely additive
atomic Boolean algebra with operators is determined by their behavior over the atoms, and
this in turn is encoded by
the atom structure of the algebra. We use $\sf BAO(s)$ short for Boolean algebra(s) with operators; operators are
operations that distribute
over the Boolean join in every component.

\begin{definition}(\textbf{Atom Structure})
Let $\A=\langle A, +,\cdot,  -, 0, 1, \Omega_{i}:i\in I\rangle$ be
an atomic Boolean algebra with operators $\Omega_{i}:i\in I$. Let
the rank of $\Omega_{i}$ be $\rho_{i}$. The \textit{atom structure}
$\At\A$ of $\A$ is a relational structure
$$\langle \At\A, R_{\Omega_{i}}:i\in I\rangle$$
where $\At\A$ is the set of atoms of $\A$, and $R_{\Omega_{i}}$ is a $(\rho(i)+1)$-ary relation over
$\At\A$ defined by
$$R_{\Omega_{i}}(a_{0},
\cdots, a_{\rho(i)})\Longleftrightarrow\Omega_{i}(a_{1}, \cdots,
a_{\rho(i)})\geq a_{0}.$$
\end{definition}
Similar 'dual' structure arise in other ways, too. For any not
necessarily atomic $\sf BAO$ $\A$,  its
\textit{ultrafilter frame} is the
structure
$$\A_{+}=\langle {\sf Uf}(\A),R_{\Omega_{i}}:i\in I\rangle,$$
 where ${\sf Uf}(\A)$
is the set of all ultrafilters of (the Boolean reduct of)
$\A$, and for $\mu_{0}, \cdots, \mu_{\rho(i)}\in
{\sf Uf}(\A)$, we put $R_{\Omega_{i}}(\mu_{0}, \cdots,
\mu_{\rho(i)})$ iff $\{\Omega(a_{1}, \cdots,
a_{\rho(i)}):a_{j}\in\mu_{j}$ for
$0<j\leq\rho(i)\}\subseteq\mu_{0}$.
\begin{definition}(\textbf{Complex algebra})
Conversely, if we are given an arbitrary structure
$\mathcal{S}=\langle S, r_{i}:i\in I\rangle$ where $r_{i}$ is a
$(\rho(i)+1)$-ary relation over $S$, we can define its
\textit{complex
algebra}
$$\Cm(\mathcal{S})=\langle \wp(S),
\cup, \setminus, \phi, S, \Omega_{i}\rangle_{i\in
I},$$
where $\wp(S)$ is the power set of $S$, and
$\Omega_{i}$ is the $\rho(i)$-ary operator defined
by$$\Omega_{i}(X_{1}, \cdots, X_{\rho(i)})=\{s\in
S:\exists s_{1}\in X_{1}\cdots\exists s_{\rho(i)}\in X_{\rho(i)},
r_{i}(s, s_{1}, \cdots, s_{\rho(i)})\},$$ for each
$X_{1}, \cdots, X_{\rho(i)}\in\wp(S)$.
\end{definition}
It is easy to check that, up to isomorphism,
$\At(\Cm(\mathcal{S}))\cong\mathcal{S}$ always, and
$\A\subseteq\Cm(\At\A)$ for any
completely additive atomic Boolean algebra with operators $\A$. If $\A$ is
finite then of course
$\A\cong\Cm(\At\A)$. For an atomic algebra $\A$, $\Tm\At\A$ denotes the {\it term algebra}; 
it is the subalgbra of $\Cm\At\A$ genertaed by the atoms, the two algebras share
the same atom structure. If $\A$ is completely additive then $\Tm\At\A\subseteq \A\subseteq \Cm\At\A$.\\

A variety $V$ of $\sf BAO$s
is {\it atom-canonical} if whenever $\A\in V$, and $\A$ is atomic, then $\Cm\At\A\in V$.
If $V$ is completely additive, then $\Cm\At\A$ is the \d\ completion of $\A$.
$\A$ is always dense in its \d\ completion and so the \d\ completion of $\A$ is atomic iff $\A$ is atomic.

Not all varieties we will encounter are completely additive; the varieties $\Sc_n$ and $\PA_n$ for $n>1$,
are not, but \d\ completions of atomic completely additive algebras remain in the 
variety, which is not the case with the variety of representable algebras 
as shown below.

The canonical extension of $\A$, namely, $\Cm(\Uf(\A))$ will be denoted by $\A^+$.
This algebra is always
complete and atomic. $\A$ embeds into $\A^+$ under the usual `Stone representability' function mapping $a\in \A$
to the set of all Boolean ultrafilters  of $\A$ containing $a$. The algebra
$\A^+$ is the \d\ completion of $\A$ if 
and only if $\A$ is finite.

For other operators on classes of algebras, $\bold H$ denotes closure under forming homomorphic images
and ${\bf Up}$ under ultraproducts.

For a class $\sf K$, and an algebra $\A$ of the same signature as
$\sf K$, $\A\in {\bf Ur}\bf K$ iff an ultrapower of $\A$ is in $\sf K$.
${\bf Up Ur}\K$ is the elementary closure of $\K$ that is the least elementary class containing $\K$;
$\K$ is elementary or synonymously
first order definable, if it is closed under ultraproducts and  ultraroots. More succinctly
$\K$ is elementary if ${\bf UpUr}\K=\K$.
$\K$ is a quasi-variety if it is closed under ${\bf SPUp}$ and a variety if it is closed
under ${\bf HSP}$. If $\sf K$ is a variety then it can be axiomatized by a set of equations; if its only a quasi-variety, then
it can be axiomatized by a set quasi-equations.
$\sf K$ is elementary if it is definable by a set of first order sentences.

\section{Rainbows, atom-canonicity}

We will show using the so called {\it blow up and blur construction}, a very indicative name suggested 
in \cite{ANT},
that for any finite $n>2$, any $\sf \K\in \{\Sc, \CA, \PA, \PEA\}$, and any 
$k\geq 3$,
$S\Nr_n\sf K_{n+k}$ is {\it not} atom canonical. We will blow up and blur a finite 
{\it rainbow algebra}.

We give the general idea for cylindric algebras, though the idea is much more universal as we will see.
Assume that $\sf RCA_n\subseteq \K$, and $\K$ is closed under forming subalgebras.
Start with a finite algebra $\C$ outside $\K$. Blow up and blur $\C$, by splitting
each atom to infinitely many, getting a new atom structure $\At$. In this process a (finite) set of blurs are used.

They do not blur the complex algebra, in the sense that $\C$ is there on this global level.
The algebra $\mathfrak{Cm}\At$ will not be in $\K$
because $\C\notin \K$ and $\C$ embeds into $\mathfrak{Cm}\At$.
Here the completeness of the complex algebra will play a major role,
because every element of $\C$,  is mapped, roughly, to {\it the join} of
its splitted copies which exist in $\mathfrak{Cm}\At$ because it is complete.

These precarious joins prohibiting membership in $\K$ {\it do not }exist in the term algebra, only finite-cofinite joins do,
so that the blurs blur $\C$ on 
this level; $\C$ does not embed in $\Tm\At.$

In fact, the the term algebra will  not only be in $\K$, but actually it will be in the possibly smaller $\sf RCA_n$.
This is where the blurs play their other role. Basically non-principal ultrafilters, the blurs
are used as colours to represent  $\Tm\At$.

In the process of representation we cannot use {\it only} principal ultrafilters,
because $\Tm\At$ {\it cannot be completely representable}, that is, it cannot have a representation that preserves
all (possibly infinitary) meets carrying them to set theoretic intersections, 
for otherwise this would give that $\mathfrak{Cm}\At$
is representable.

But the blurs will actually provide a {\it complete representation} of the {\it canonical extension}
of $\Tm\At$, in symbols $\Tm\At^+$; the algebra whose underlying set consists of all ultrafilters of $\Tm\At$. The atoms of $\Tm\At$
are coded in the principal ones,  and the remaining non- principal ultrafilters, or the blurs,
will be finite, used as colours to completely represent $\Tm\At^+$, in the process representing $\Tm\At$.

We start off with a conditional theorem: giving a concrete instance of a blow up and blur construction for relation algebras due to Hirsch and Hodkinson. 
The proof is terse  highlighting only the main ideas.
\begin{theorem}\label{decidability} Let $m\geq 3$. Assume that for any simple atomic relation algebra with atom structure $S$,
there is a cylindric atom structure $H$, constructed effectively from $S$,  such that:
\begin{enumarab}
\item If $\Tm S\in \sf RRA$, then $\Tm H\in \RCA_m$,
\item If $S$ is finite, then $H$ is finite,
\item $\Cm S$ is embeddable in $\Ra$ reduct of $\Cm H$.
\end{enumarab}
Then  for all $k\geq 3$, $S\Nr_m\CA_{m+k}$ is not closed under completions,
\end{theorem}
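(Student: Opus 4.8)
The plan is to fix $k\geq 3$ and exhibit an atomic algebra $\A\in S\Nr_m\CA_{m+k}$ whose \d\ completion $\Cm\At\A$ is \emph{not} in $S\Nr_m\CA_{m+k}$; this is enough since $\Cm\At\A$ is the \d\ completion of $\A$. The starting point is the relation-algebra-level construction of Hirsch and Hodkinson alluded to in the paragraph preceding the statement: there is a simple, atomic, countable relation algebra $\R$ with atom structure $S$ such that $\Tm S\in\RRA$ but $\Cm S=\Cm\At(\Tm S)\notin S\Ra\CA_{m+k}$ (the blow-up-and-blur relation algebra, built by splitting the atoms of a finite Maddux-type algebra that embeds nothing into a $(m+k)$-dimensional relational basis). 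Feed this $S$ into the hypothesised effective construction to obtain the cylindric atom structure $H$. Put $\A=\Tm H$, the term algebra on $H$; it is atomic, countable, and by hypothesis (1) lies in $\RCA_m$, hence in $S\Nr_m\CA_{m+k}$ since $\RCA_m=S\Nr_m\CA_\omega\subseteq S\Nr_m\CA_{m+k}$.

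The second and main step is to show $\Cm H=\Cm\At\A\notin S\Nr_m\CA_{m+k}$. Suppose for contradiction it were; then $\Cm H$ would neat-embed into some $\D\in\CA_{m+k}$, i.e.\ $\Cm H\subseteq\Nr_m\D$. One then passes to the relation algebra reduct: $\Ra(\Nr_m\D)\in S\Ra\CA_{m+k}$ (using that $m\geq 3$ so the $\Ra$ reduct is defined, and the standard fact that forming $\Ra$ reducts commutes appropriately with $\Nr_m$ and with $S$, as in \cite{1,HHbook2}), and hypothesis (3) gives an embedding of $\Cm S$ into $\Ra(\Cm H)\subseteq\Ra(\Nr_m\D)$. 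Since $S\Ra\CA_{m+k}$ is closed under subalgebras, this would force $\Cm S\in S\Ra\CA_{m+k}$, contradicting the defining property of the Hirsch--Hodkinson relation algebra. (Here hypothesis (2), finiteness preservation, is what one invokes if one wants the argument to also yield the decidability/effectiveness refinement signalled by the label of the theorem, since it lets one run the construction on the finite approximants; for the bare non-completeness statement (2) is not strictly needed, but it is harmless to carry it.)

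I expect the main obstacle to be the interface between the cylindric and relation-algebra levels: verifying that a neat embedding $\Cm H\hookrightarrow\Nr_m\D$ really does descend to an embedding $\Cm S\hookrightarrow$ (an $\Ra$ reduct of a $\CA_{m+k}$), i.e.\ that the embedding of step (3) is compatible with the neat-reduct structure and that $\Ra$ reducts of neat reducts behave as needed. This requires knowing that the operations witnessing $\Cm S\subseteq\Ra(\Cm H)$ are preserved under the $\CA$-embedding, which is where completeness of $\Cm H$ (every relevant supremum is a genuine join) and complete additivity of the operators are used; the term algebra $\Tm H$ by contrast misses exactly the infinitary joins needed to reconstruct $\Cm S$ inside it, which is precisely why $\Tm H$ can be representable while $\Cm H$ is not. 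The rest — atomicity of $\Tm H$, that $\Cm H$ is its \d\ completion, and the chain $\RCA_m\subseteq S\Nr_m\CA_{m+k}$ — is routine given the preliminaries. Finally, since $k\geq 3$ was arbitrary, this shows $S\Nr_m\CA_{m+k}$ is not closed under \d\ completions for every $k\geq 3$, as claimed.
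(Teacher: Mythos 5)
Your proposal is correct and follows essentially the same route as the paper: take a relation algebra atom structure $S$ with $\Tm S$ representable but $\Cm S$ outside $S\Ra\CA_{6}$ (the Hirsch--Hodkinson splitting of a rainbow algebra, \cite[Lemmas 17.32--17.37]{HHbook}), feed it to the hypothesised construction, and derive the contradiction by passing to the $\Ra$ reduct of a putative neat embedding of $\Cm H$. The only cosmetic difference is that the paper fixes one $S$ with $\Cm S\notin S\Ra\CA_6$ and uses $S\Ra\CA_{m+k}\subseteq S\Ra\CA_6$ for all $k\geq 3$, whereas you take a witness for each $k$ separately.
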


\begin{proof}
Let $S$ be a relation atom structure such that $\Tm S$ is representable while $\Cm S\notin \sf RA_6$.
Such an atom structure exists \cite[Lemmas 17.34-35-36-37]{HHbook}.

We give a brief sketch at how such algebras are constructed by allowing complete irreflexive graphs having an arbitrary finite set
nodes, slightly generalizing the proof in {\it op.cit}, though the proof idea is essentially the same.

Another change is that we refer to non-principal ultrafilters (intentionally) by {\it blurs} to emphasize the connection with the blow up and
blur construction in \cite{ANT} as well as with the blow up and blur construction outlined above, to be encountered 
in full detail in a litle while, witness theorem \ref{can}.

In all cases a finite algebra is blown up and blurred to give a representable algebra (the term algebra on the blown up and blurred finite atom
structure) whose \d\ completion does not have a neat embedding
property.

We use the notation of the above cited lemmas in \cite{HHbook2} without warning, and our proof will be very brief just stressing the main ideas.
$G_r^n$ denotes the usual atomic $r$ rounded 
game played on atomic networks having $n$ nodes of an atomic relation algebra, 
where $n,r\leq \omega$, and  $K_r$ ($r\in \omega)$ 
denotes the complete ireflexive graph with
$r$ nodes. 

Let $\R$ be the rainbow algebra $\A_{K_m, K_n}$, $m>n>2$.
Let $T$ be the term algebra obtained by splitting the reds. Then $T$ has exactly two blurs
$\delta$ and $\rho$. $\rho$ is a flexible non-principal ultrafilter consisting of reds with distinct indices and $\delta$ is the reds
with common indices.
Furthermore, $T$ is representable, but $\Cm\At T\notin  S\Ra\CA_{m+2}$, in particular, it is not representable 
\cite [Lemma 17.32]{HHbook2}.
Now we use  the \ef\ forth pebble game $EF_r^k(A, B)$
where $A$ and $B$ are relational structures. This game has 
$r$ rounds and $k$ pebbles. 
The rainbow theorem \cite[Theorem 16.5]{HHbook}
says that \pe\ has a \ws\ in the game $G_{1+r}^{2+p}(\A_{A, B})$ if and only if she has a \ws\ in $EF_r^p(A,B)$.

Using this theorem it is obvious that  \pe\ has a \ws\ over $\At\R$  in $m+2$ rounds,
hence $\R\notin \sf RA_{m+2}$, hence is not
in $S\Ra\CA_{m+2}$.  $\Cm\At T$ is also not in the latter class
for $\R$ embeds into it, by mapping every red
to the join of its copies. Let $D=\{r_{ll}^n: n<\omega, l\in n\}$, and $R=\{r_{lm}^n, l,m\in n, l\neq m\}$.
If $X\subseteq R$, then $X\in T$ if and only if $X$ is finite or cofinite in $R$ and same for subsets of $D$ \cite[lemma 17.35]{HHbook}.
Let $\delta=\{X\in T: X\cap D \text { is cofinite in $D$}\}$,
and $\rho=\{X\in T: X\cap R\text { is cofinite in $R$}\}$.
Then these are {\it the} non principal ultrafilters, they are the blurs and they are enough
to (to be used as colours), together with the principal ones, to represent $T$ as follows \cite[bottom of p. 533]{HHbook2}.
Let $\Delta$ be the graph $n\times \omega\cup m\times \{{\omega}\}$.
Let $\B$ be the full rainbow algebras over $\At\A_{K_m, \Delta}$ by
deleting all red atoms $r_{ij}$ where $i,j$ are
in different connected components of $\Delta$.

Obviously \pe\ has a \ws\ in ${\sf EF}_{\omega}^{\omega}(K_m, K_m)$, and so it has a \ws\ in
$G_{\omega}^{\omega}(\A_{K_m, K_m})$.
But $\At\A_{K_m, K_m}\subseteq \At\B\subseteq \At_{K_m, \Delta}$, and so $\B$ is representable.

One  then defines a bounded morphism from $\At\B$ to the the canonical extension
of $T$, which we denote by $T^+$, consisting of all ultrafilters of $T$. The blurs are images of elements from
$K_m\times \{\omega\}$, by mapping the red with equal double index,
to $\delta$, for distinct indices to $\rho$.
The first copy is reserved to define the rest of the red atoms the obvious way.
(The underlying idea is that this graph codes the principal ultrafilters in the first component, and the non principal ones in the second.)
The other atoms are the same in both structures. Let $S=\Cm\At T$, then $\Cm S\notin S\Ra\CA_{m+2}$ \cite[lemma 17.36]{HHbook}.

Note here that the \d\ completion of $T$ is not representable while its canonical extension is
{\it completely representable}, via the representation defined above.
However, $T$ itself is {\it not} completely representable, for a complete representation of $T$ induces a representation of its \d\ completion,
namely, $\Cm\At\A$.

Now let $H$ be the $\CA_m$ atom structure obtained from $\At T$ provided by the hypothesis of the 
theorem.
Then $\Tm H\in \RCA_m$. We claim that $\Cm H\notin {\bf S}\Nr_m\CA_{m+k}$, $k\geq 3$.
For assume not, i.e. assume that $\Cm H\in {\bf S}\Nr_m\CA_{m+k}$, $k\geq 3$.
We have $\Cm S$ is embeddable in $\Ra\Cm H.$  But then the latter is in ${\bf S}\Ra\CA_6$
and so is $\Cm S$, which is not the case.
\end{proof}

Hodkinson constructs atom structures of cylindric and polyadic algebras of any pre-assigned finite dimension $>2$ 
from atom structures
of relation algebras \cite{AU}. One could well be tempted to use such a construction with the above proof to obtain an analogous
result for cylindric and polyadic algebras. 
However, we emphasize that the next result {\it cannot} be obtained by lifting the relation algebra case 
\cite[ lemmas 17.32, 17.34, 17.35, 17.36]{HHbook}
to cylindric algebras 
using  Hodkinson's construction in \cite{AU} as it stands. It is true that Hodkinson constructs from 
every atomic relation algebra an atomic cylindric algebra of dimension $n$, for any $n\geq 3$, 
but the relation algebras {\it does not} embed into the $\sf Ra$ reduct of the constructed
cylindric algebra when $n\geq 6$. If it did, then the $\sf Ra$ result would lift as indeed is the case with 
$n=3$ \cite{snr}. Now we are faced with two options. Either modify 
Hodkinson's construction, implying that the embeddability of the given relation algebra
in the $\sf Ra$ reduct of the constructed cylindric algebras, or avoid completely 
the route via relation algebras. We tend to think that it is impossible to adapt Hodkinson's construction the way needed, because if $\A$
is a non representable relation algebra, and $\Cm \At(\A)$ embeds into the $\Ra$ reduct of a cylindric algebra of every dimension $>2$, then
$\A$ will be representable, which is a contradiction. 

Therefore we choose the second option. We instead start from scratch. We blow up and blur a finite  rainbow cylindric algebra.

In \cite{HHbook2} the rainbow cylindric algebra of dimension $n$ on a graph $\Gamma$ is denoted by $\R(\Gamma)$.
We consider $\R(\Gamma)$ to be in ${\sf PEA}_n$ by expanding it with the polyadic operations defined the obvious way (see below).
In what follows we consider $\Gamma$ to be the indices of the reds, and for a complete irreflexive graph
$\G$, by ${\sf PEA}_{\G, \Gamma}$
we mean the rainbow cylindric algebra $\R(\Gamma)$  of dimension $n$,
where ${\sf G}=\{\g_i: 1\leq i<n-1\}\cup \{\g_0^i: i\in \G\}$.

More generally, we consider a rainbow polyadic algebra based on relational structures 
$A, B$, to be the rainbow algebra
with signature the binary
colours (binary relation symbols)
$\{\r_{ij}: i,j\in B\}\cup \{\w_i: i<n-1\}\cup \{\g_i:1\leq i<n-1\}\cup \{\g_0^i : i\in A\}$ and $n-1$
shades of yellow ($n-1$ ary relation symbols)
$\{\y_S: S\subseteq_{\omega} A, \text { or } S=A\}.$ 

We look at models of the rainbow theorem as coloured graphs \cite{HH}.
This class is denoted by ${\sf CRG}_{A,B}$ or simply $\sf CRG$ when $A$ and $B$ are clear from context.

A coloured graph is a graph such that each of its edges is labelled by one of the first three colours mentioned above, namely, 
greens, whites or reds, and some $n-1$ hyperedges are also
labelled by the shades of yellow.
Certain coloured graphs will deserve 
special attention.

\begin{definition}
Let $i\in A$, and let $M$ be a coloured graph  consisting of $n$ nodes
$x_0,\ldots,  x_{n-2}, z$. We call $M$ an {\it $i$ - cone} if $M(x_0, z)=\g_0^i$
and for every $1\leq j\leq n-2$, $M(x_j, z)=\g_j$,
and no other edge of $M$
is coloured green.
$(x_0,\ldots, x_{n-2})$
is called {\it the center of the cone}, $z$ {\it the apex of the cone}
and {\it $i$ the tint of the cone.}
\end{definition}

\begin{definition}\label{def}
The class of coloured graphs $\sf CRG$ are

\begin{itemize}

\item $M$ is a complete graph.

\item $M$ contains no triangles (called forbidden triples)
of the following types:
\vspace{-.2in}
\begin{eqnarray}
&&\nonumber\\
(\g, \g^{'}, \g^{*}), (\g_i, \g_{i}, \w_i)
&&\mbox{any }1\leq i< n-1\  \\
(\g^j_0, \g^k_0, \w_0)&&\mbox{ any } j, k\in A\\
\label{forb:match}(\r_{ij}, \r_{j'k'}, \r_{i^*k^*})&& i,j,j',k',i^*, k^*\in B\\ \mbox{unless }i=i^*,\; j=j'\mbox{ and }k'=k^*
\end{eqnarray}
and no other triple of atoms is forbidden.

\item If $a_0,\ldots,   a_{n-2}\in M$ are distinct, and no edge $(a_i, a_j)$ $i<j<n$
is coloured green, then the sequence $(a_0, \ldots, a_{n-2})$
is coloured a unique shade of yellow.
No other $(n-1)$ tuples are coloured shades of yellow.

\item If $D=\set{d_0,\ldots,  d_{n-2}, \delta}\subseteq M$ and
$M\upharpoonright D$ is an $i$ cone with apex $\delta$, inducing the order
$d_0,\ldots,  d_{n-2}$ on its base, and the tuple
$(d_0,\ldots, d_{n-2})$ is coloured by a unique shade
$\y_S$ then $i\in S.$
\end{itemize}

One then can define a polyadic equality atom structure
of dimension $n$ from the class $\sf CRG$. It is a {\it rainbow atom structure}.  Rainbow atom structures  are what Hirsch and Hodkinson call
atom structures built from a class of models \cite{HHbook2}.
Our models are, according to the original more traditional view \cite{HH}
coloured graphs. So let $\sf CRG$ be the class of coloured graphs as defined above.
Let $$\At=\{a:n \to M, M\in \sf CRG: \text { $a$ is surjective}\}.$$
We write $M_a$ for the element of $\At$ for which
$a:n\to M$ is a surjection.
Let $a, b\in \At$ define the
following equivalence relation: $a \sim b$ if and only if
\begin{itemize}
\item $a(i)=a(j)\Longleftrightarrow b(i)=b(j),$

\item $M_a(a(i), a(j))=M_b(b(i), b(j))$ whenever defined,

\item $M_a(a(k_0),\dots, a(k_{n-2}))=M_b(b(k_0),\ldots, b(k_{n-2}))$ whenever
defined.
\end{itemize}
Let $\At$ be the set of equivalences classes. Then define
$$[a]\in E_{ij} \text { iff } a(i)=a(j).$$
$$[a]T_i[b] \text { iff }a\upharpoonright n\smallsetminus \{i\}=b\upharpoonright n\smallsetminus \{i\}.$$
Define accessibility relations corresponding to the polyadic (transpositions) operations as follows:
$$[a]S_{ij}[b] \text { iff } a\circ [i,j]=b.$$
This, as easily checked, defines a $\sf PEA_n$
atom structure. The complex algebra of this atom structure is denoted by ${\sf PEA}_{A, B}$ where $A$ is the greens and 
$B$ is the reds.
\end{definition}
Consider the following two games on coloured graphs, each with $\omega$ rounds, and limited number of pebbles
$m>n$. They are translations of $\omega$ atomic games played on atomic networks
of a rainbow algebra using a limited number of nodes $m$.
Both games offer \pa\ only one move, namely, a cylindrifier move.

From the graph game perspective both games \cite[p.27-29]{HH} build a nested sequence $M_0\subseteq M_1\subseteq \ldots $.
of coloured graphs.

First game $G^m$.
\pa\ picks a graph $M_0\in \sf CRG$ with $M_0\subseteq m$ and
$\exists$ makes no response
to this move. In a subsequent round, let the last graph built be $M_i$.
\pa\ picks
\begin{itemize}
\item a graph $\Phi\in \sf CRG$ with $|\Phi|=n,$
\item a single node $k\in \Phi,$
\item a coloured graph embedding $\theta:\Phi\smallsetminus \{k\}\to M_i.$
Let $F=\phi\smallsetminus \{k\}$. Then $F$ is called a face.
\pe\ must respond by amalgamating
$M_i$ and $\Phi$ with the embedding $\theta$. In other words she has to define a
graph $M_{i+1}\in C$ and embeddings $\lambda:M_i\to M_{i+1}$
$\mu:\phi \to M_{i+1}$, such that $\lambda\circ \theta=\mu\upharpoonright F.$
\end{itemize}
$F^m$ is like $G^m$, but \pa\ is allowed to resuse nodes.

$F^m$ has an equivalent formulation on atomic networks of atomic algebras.

Let $\delta$ be a map. Then $\delta[i\to d]$ is defined as follows. $\delta[i\to d](x)=\delta(x)$
if $x\neq i$ and $\delta[i\to d](i)=d$. We write $\delta_i^j$ for $\delta[i\to \delta_j]$.

\begin{definition}
Let $2< n<\omega.$ Let $\C$ be an atomic ${\sf PEA}_{n}$.
An \emph{atomic  network} over $\C$ is a map
$$N: {}^{n}\Delta\to \At\C,$$
where $\Delta$ is a non-empty set called a set of nodes,
such that the following hold for each $i,j<n$, $\delta\in {}^{n}\Delta$
and $d\in \Delta$:
\begin{itemize}
\item $N(\delta^i_j)\leq {\sf d}_{ij},$
\item $N(\delta[i\to d])\leq {\sf c}_iN(\delta),$
\item $N(\bar{x}\circ [i,j])= {\sf s}_{[i,j]}N(\bar{x})$ for all $i,j<n$.

\end{itemize}
\end{definition}
\begin{definition}\label{def:games}
Let $2\leq n<\omega$. For any ${\sf Sc}_n$
atom structure $\alpha$ and $n<m\leq
\omega$, we define a two-player game $F^m(\alpha)$,
each with $\omega$ rounds.

Let $m\leq \omega$.
In a play of $F^m(\alpha)$ the two players construct a sequence of
networks $N_0, N_1,\ldots$ where $\nodes(N_i)$ is a finite subset of
$m=\set{j:j<m}$, for each $i$.

In the initial round of this game \pa\
picks any atom $a\in\alpha$ and \pe\ must play a finite network $N_0$ with
$\nodes(N_0)\subseteq  m$,
such that $N_0(\bar{d}) = a$
for some $\bar{d}\in{}^{n}\nodes(N_0)$.

In a subsequent round of a play of $F^m(\alpha)$, \pa\ can pick a
previously played network $N$ an index $l<n$, a {\it face}
$F=\langle f_0,\ldots, f_{n-2} \rangle \in{}^{n-2}\nodes(N),\; k\in
m\sim\set{f_0,\ldots, f_{n-2}}$, and an atom $b\in\alpha$ such that
$$b\leq {\sf c}_lN(f_0,\ldots, f_i, x,\ldots, f_{n-2}).$$
The choice of $x$ here is arbitrary,
as the second part of the definition of an atomic network together with the fact
that $\cyl i(\cyl i x)=\cyl ix$ ensures that the right hand side does not depend on $x$.

This move is called a \emph{cylindrifier move} and is denoted
$$(N, \langle f_0, \ldots, f_{n-2}\rangle, k, b, l)$$
or simply by $(N, F,k, b, l)$.
In order to make a legal response, \pe\ must play a
network $M\supseteq N$ such that
$M(f_0,\ldots, f_{i-1}, k, f_{i+1},\ldots, f_{n-2}))=b$
and $\nodes(M)=\nodes(N)\cup\set k$.

\pe\ wins $F^m(\alpha)$ if she responds with a legal move in each of the
$\omega$ rounds.  If she fails to make a legal response in any
round then \pa\ wins.
\end{definition}

Next we adapt certain notions worked out for relation algebras in \cite{r} to the $\CA$ 
context culminating in theorem \ref{thm:n} that will be used several times,
to show that certain constructed algebras do not have a neat embedding property.

\begin{definition}\label{subs}
Let $n$ be an ordinal. An $s$ word is a finite string of substitutions $({\sf s}_i^j)$,
a $c$ word is a finite string of cylindrifications $({\sf c}_k)$.
An $sc$ word is a finite string of substitutions and cylindrifications.
Any $sc$ word $w$ induces a partial map $\hat{w}:n\to n$
by
\begin{itemize}

\item $\hat{\epsilon}=Id,$

\item $\widehat{w_j^i}=\hat{w}\circ [i|j],$

\item $\widehat{w{\sf c}_i}= \hat{w}\upharpoonright(n\smallsetminus \{i\}).$

\end{itemize}
\end{definition}

If $\bar a\in {}^{<n-1}n$, we write ${\sf s}_{\bar a}$, or more frequently
${\sf s}_{a_0\ldots a_{k-1}}$, where $k=|\bar a|$,
for an an arbitrary chosen $sc$ word $w$
such that $\hat{w}=\bar a.$
$w$  exists and does not
depend on $w$ by \cite[definition~5.23 ~lemma 13.29]{HHbook}.
We can, and will assume \cite[Lemma 13.29]{HHbook}
that $w=s{\sf c}_{n-1}{\sf c}_n.$
[In the notation of \cite[definition~5.23,~lemma~13.29]{HHbook},
$\widehat{s_{ijk}}$ for example is the function $n\to n$ taking $0$ to $i,$
$1$ to $j$ and $2$ to $k$, and fixing all $l\in n\setminus\set{i, j,k}$.]

We need some more technical lemmas which are  cylindric analogues of lemmas formulated for relation algebras
in \cite{r}.

The next definition is  formulated  for the least reduct ${\sf Sc}$s of $\PEA$s, so that it applies to all its expansions studied here.

\begin{definition}\label{def:hat}
For $m\geq 5$ and $\C\in\Sc_m$, if $\A\subseteq\Nr_n\C$ is an
atomic $\Sc_n$ and $N$ is an $\A$-network with $\nodes(N)\subseteq m$, then we define
$\widehat N\in\C$ by
\[\widehat N =
 \prod_{i_0,\ldots, i_{n-1}\in\nodes(N)}{\sf s}_{i_0, \ldots, i_{n-1}}N(i_0,\ldots, i_{n-1})\]
$\widehat N\in\C$ depends
implicitly on $\C$.
\end{definition}

In what follows we write $\A\subseteq_c \B$ if $\A$ is a complete subalgebra of $\B$, that is, if $\A\subseteq \B$ and 
whenever $X\subseteq \A$ is such that $\sum ^{\A}X=1$, then $\sum ^{\B}X=1$.

\begin{lemma}\label{lem:atoms2}
Let $n<m$ and let $\A$ be an atomic $\sf Sc_n$,
$\A\subseteq_c\Nr_n\C$
for some $\C\in\Sc_m$.  For all $x\in\C\setminus\set0$ and all $i_0, \ldots, i_{n-1} < m$ there is $a\in\At(\A)$ such that
${\sf s}_{i_0,\ldots, i_{n-1}}a\;.\; x\neq 0$.
\end{lemma}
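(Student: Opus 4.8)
\textbf{Proof plan for Lemma \ref{lem:atoms2}.}

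The plan is to reduce the statement about the $m$-dimensional algebra $\C$ to a statement about the $n$-dimensional algebra $\A$ by ``pulling $x$ down to a neat reduct element'' via cylindrification, and then exploiting atomicity of $\A$ together with the complete-subalgebra hypothesis $\A\subseteq_c\Nr_n\C$. First I would fix $x\in\C\setminus\{0\}$ and indices $i_0,\dots,i_{n-1}<m$. The key observation is that ${\sf s}_{i_0,\dots,i_{n-1}}$, being a substitution operator built from an $sc$-word (in the sense of Definition \ref{subs}, and using the normal form $w=s{\sf c}_{n-1}{\sf c}_n$ available by \cite[Lemma 13.29]{HHbook}), has a companion operator ${\sf c}_{(i_0,\dots,i_{n-1})}$ — really the composition of the cylindrifiers ${\sf c}_j$ for all $j\notin\{i_0,\dots,i_{n-1}\}$ together with the appropriate substitutions — such that for every $y\in\C$, ${\sf s}_{i_0,\dots,i_{n-1}}y\ne 0$ iff a corresponding ``cylindrified-out'' element is nonzero, and moreover applying enough cylindrifiers to ${\sf s}_{i_0,\dots,i_{n-1}}x$ lands us inside $\Nr_n\C$. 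Concretely I would set $t=\prod_{j<n}{\sf c}_j\cdots$ — i.e. cylindrify $x$ on all coordinates outside a chosen $n$-element index set and then substitute — to obtain a nonzero element $t\in\Nr_n\C$ with the property that ${\sf s}_{i_0,\dots,i_{n-1}}t\cdot x\ne 0$ whenever $t\cdot(\text{something})\ne 0$; the point is that cylindrifiers only enlarge elements, so nonzeroness is preserved going up, and the substitution identities let us come back down.

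Next, since $t\in\Nr_n\C$ is a nonzero element and $\A\subseteq_c\Nr_n\C$, I would use atomicity of $\A$ together with the complete-subalgebra condition. Here is the crucial use of $\subseteq_c$: if no atom $a$ of $\A$ satisfied $a\cdot t\ne 0$ (inside $\C$, equivalently inside $\Nr_n\C$), then $t$ would be disjoint from every atom of $\A$, hence $t$ would be a lower bound for all complements of atoms; but $\sum^{\A}\At(\A)=1$ since $\A$ is atomic, so $\sum^{\A}\{-a: a\in\At(\A)\}$... more cleanly: the set $\{-a : a\in\At\A\}$ has supremum... I would instead argue that $1-t$ would be an upper bound in $\Nr_n\C$ for $\At(\A)$, and because $\A\subseteq_c\Nr_n\C$ and $\sum^{\A}\At(\A)=1^{\A}=1$, completeness of the embedding forces $\sum^{\Nr_n\C}\At(\A)=1$, whence $t\le 1-t$... no: $t\ne 0$ contradicts $t$ being below no atom. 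The clean statement is: in $\Nr_n\C$, $t\ne 0$ and $\sum^{\Nr_n\C}\At(\A)=1$ (by $\subseteq_c$), so $t\cdot a\ne 0$ for some $a\in\At(\A)$; otherwise $t\le \prod_{a}(-a) = -\sum a = 0$.

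Finally, given such an atom $a\in\At(\A)$ with $a\cdot t\ne 0$, I would transfer back to the claimed inequality. Since $t$ was obtained from $x$ by cylindrifying on the coordinates not among $\{i_0,\dots,i_{n-1}\}$, the substitution–cylindrifier calculus (the analogues in \cite{r} of the $sc$-word manipulations, specifically that ${\sf s}_{i_0,\dots,i_{n-1}}a\cdot x\ne 0 \iff a\cdot {\sf s}_{j_0,\dots}x\ne 0$ for the ``inverse'' index pattern, using that $a\in\Nr_n\C$ so $a$ is fixed by the outer cylindrifiers) gives ${\sf s}_{i_0,\dots,i_{n-1}}a\cdot x\ne 0$ from $a\cdot t\ne 0$. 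I expect the main obstacle to be exactly this last bookkeeping: choosing the correct $sc$-word and verifying the ``adjunction'' identity ${\sf s}_{i_0,\dots,i_{n-1}}y\cdot x\ne 0 \iff y\cdot(\text{cylindrified }x)\ne 0$ in the generality of $\Sc_m$ (no diagonals, so one cannot use the usual ${\sf d}_{ij}$ tricks) — this is where one must be careful that the $i_k$ may repeat and may exceed $n$, and lean on \cite[Lemma 13.29]{HHbook} and the $sc$-word formalism of Definition \ref{subs} rather than on ad hoc computation.
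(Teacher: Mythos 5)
Your overall architecture (pull $x$ into $\Nr_n\C$ by cylindrifying, find an atom of $\A$ meeting the result using $\A\subseteq_c\Nr_n\C$, then transfer back) is not the paper's argument, and the step you yourself flag as ``the main obstacle'' is a genuine gap rather than bookkeeping. To pass from $a\cdot t\neq 0$ back to ${\sf s}_{i_0,\dots,i_{n-1}}a\cdot x\neq 0$ you need ${\sf s}_{i_0,\dots,i_{n-1}}$ and your cylindrify-and-substitute operator to be \emph{conjugate} in the abstract algebra $\C$. In $\CA_m$ such conjugation identities for ${\sf s}^i_j$ can be derived from the diagonals (via ${\sf s}^i_jx={\sf c}_i(x\cdot{\sf d}_{ij})$ and the self-conjugacy of ${\sf c}_i$), but the lemma is stated for $\Sc_m$, where no diagonals are available, the indices $i_0,\dots,i_{n-1}$ may repeat so the induced map has no inverse, and \cite[Lemma 13.29]{HHbook} together with Definition \ref{subs} only supplies a normal form for $sc$-words, not adjunction identities. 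Moreover, cylindrifying $x$ on the coordinates outside $\{i_0,\dots,i_{n-1}\}$ does not by itself produce an element of $\Nr_n\C$ unless $\{i_0,\dots,i_{n-1}\}\subseteq n$; you would still need substitutions to relocate the support, which runs into the same difficulty. As written, the proof does not close.

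The paper's proof is much shorter and avoids any transfer: since every $a\in\A$ lies in $\Nr_n\C$, the cylindrifiers occurring in the word act as the identity on $a$, so one may take ${\sf s}_{i_0,\dots,i_{n-1}}$ to be a pure string of substitutions; each ${\sf s}^i_j$ is completely additive, hence so is the composite; therefore $\sum\{{\sf s}_{i_0,\dots,i_{n-1}}a:a\in\At\A\}={\sf s}_{i_0,\dots,i_{n-1}}\sum\At\A={\sf s}_{i_0,\dots,i_{n-1}}1=1$ in $\C$ (the hypothesis $\A\subseteq_c\Nr_n\C$ entering only to give $\sum\At\A=1$), and if ${\sf s}_{i_0,\dots,i_{n-1}}a\cdot x=0$ for every atom $a$ then $1-x$ would be an upper bound for this family, contradicting $x\neq 0$. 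In other words, the complete-subalgebra hypothesis is applied to the \emph{images} of the atoms under the completely additive substitution word, not to a preimage of $x$; that is the idea your proposal is missing.
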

\begin{proof}
We can assume, see definition  \ref{subs},
that ${\sf s}_{i_0,\ldots, i_{n-1}}$ consists only of substitutions, since ${\sf c}_{m}{\sf c}_{m-1}\ldots
{\sf c}_nx=x$
for every $x\in \A$. We have ${\sf s}^i_j$ is a
completely additive operator (any $i, j$), hence ${\sf s}_{i_0,\ldots, i_{n-1}}$
is too  (see definition~\ref{subs}).
So $\sum\set{{\sf s}_{i_0\ldots, i_{n-1}}a:a\in\At(\A)}={\sf s}_{i_0\ldots i_{n-1}}
\sum\At(\A)={\sf s}_{i_0\ldots, i_{n-1}}1=1$,
for any $i_0,\ldots, i_{n-1}<n$.  Let $x\in\C\setminus\set0$.  It is impossible
that ${\sf s}_{i_0\ldots, i_{n-1}}\;.\;x=0$ for all $a\in\At(\A)$ because this would
imply that $1-x$ was an upper bound for $\set{{\sf s}_{i_0\ldots i_{n-1}}a:
a\in\At(\A)}$, contradicting $\sum\set{{\sf s}_{i_0\ldots, i_{n-1}}a :a\in\At(\A)}=1$.
\end{proof}

For networks $M, N$ and any set $S$, we write $M\equiv^SN$
if $N\restr S=M\restr S$, and we write $M\equiv_SN$
if the symmetric difference $\Delta(\nodes(M), \nodes(N))\subseteq S$ and
$M\equiv^{(\nodes(M)\cup\nodes(N))\setminus S}N$. We write $M\equiv_kN$ for
$M\equiv_{\set k}N$.

We write $Id_{-i}$ for the function $\{(k,k): k\in n\smallsetminus\{i\}\}.$
For a network $N$ and a partial map $\theta$ from $n$ to $n$, that is $\dom\theta\subseteq n$, then
$N\theta$ is the network whose labelling is defined by $N\theta(\bar{x})=N(\tau(\bar{x}))$
where for $i\in n$, $\tau(i)=\theta(i)$ for $i\in \dom\theta$
and $\tau(i)=i$ otherwise.
Recall that $F^m$ is the usual atomic game
on networks, except that the nodes are $m$ and \pa\ can re use nodes.
Then:

\begin{theorem}\label{thm:n}
Let $\K$ be any class between $\Sc$ and $\PEA$. Let $n<m$, and let $\A$ be an atomic $\K_n.$
If $\A\in S_c\Nr_{n}\K_m, $
then \pe\ has a \ws\ in $F^m(\At\A)$ (the latter involves only cylindrifier moves so it applies to algebras in $\K$).
In particular, if $\A$ is completely representable, then \pe\ has a \ws\ in $F^{\omega}(\At\A).$
\end{theorem}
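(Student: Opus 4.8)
The plan is to play the game $F^m(\At\A)$ on behalf of \pe\ by maintaining, as her inductive hypothesis, not merely an abstract network but a network $N$ together with a witness $0\neq x_N\in\C$ (where $\C\in\K_m$ witnesses $\A\subseteq_c\Nr_n\C$) such that $x_N\leq\widehat N$, with $\widehat N$ as in Definition \ref{def:hat}. So the real statement she keeps alive is: ``$\widehat N\neq 0$ in $\C$'', or more precisely that the current network embeds, via the substitution operators, into a nonzero element of $\C$. First I would verify the base case: when \pa\ opens by picking an atom $a\in\At\A$, \pe\ sets $N_0$ to be the network on $n$ nodes with $N_0(\bar d)=a$ for the identity tuple $\bar d$; since $a\neq 0$ in $\A\subseteq\Nr_n\C$ we have $\widehat{N_0}\geq a\neq 0$ (one has to check the product over tuples in Definition \ref{def:hat} collapses appropriately using that the $\sf s_\tau$ are Boolean endomorphisms and that $N_0$ is constant on the relevant tuples, which is where being a legitimate atomic network enters).

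The induction step is the heart. Suppose \pa\ makes a cylindrifier move $(N,F,k,b,l)$ at a previously played network $N$ with $\widehat N\neq 0$: he picks a face $F=\langle f_0,\dots,f_{n-2}\rangle$, a new node $k\in m\setminus\rg F$, an index $l<n$, and an atom $b\in\At\A$ with $b\leq {\sf c}_l N(f_0,\dots,x,\dots,f_{n-2})$. \pe\ must produce $M\supseteq N$ with $\nodes(M)=\nodes(N)\cup\{k\}$ and $M$ agreeing with the demanded value $b$ at the tuple with $k$ inserted in position $l$. The key algebraic input is Lemma \ref{lem:atoms2}: applied inside $\C$ to the nonzero element $x={\sf c}_k\widehat N\cdot{\sf s}_{\dots k\dots}b$ (suitably formed so that it is nonzero — this uses the hypothesis $\widehat N\leq$ something compatible with $b$, coming from $b\leq{\sf c}_l N(\cdots)$ and the fact that $\A\subseteq_c\Nr_n\C$ so suprema are computed correctly), it lets \pe\ choose, for every $(n-1)$-tuple of old-or-new nodes that does not yet carry a label, an atom of $\A$ consistent with the nonzero witness. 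Doing this simultaneously and coherently over all the new tuples involving $k$, and checking the three network conditions ($\sf d_{ij}$, $\sf c_i$, and $\sf s_{[i,j]}$ clauses) hold for $M$, gives the legal response and simultaneously a new nonzero witness $x_M\leq\widehat M$. The ``in particular'' clause is then immediate: if $\A$ is completely representable it embeds completely into $\Nr_\omega\D$ for some $\D\in\K_\omega$ (via the standard neat-embedding characterization of complete representability), hence $\A\in S_c\Nr_n\K_m$ for every finite $m>n$ by taking reducts, and a \ws\ in every $F^m$ with $m$ finite plus a routine limit/amalgamation argument yields one in $F^\omega(\At\A)$.

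The main obstacle I expect is bookkeeping in the induction step: verifying that the element of $\C$ that \pe\ feeds to Lemma \ref{lem:atoms2} is genuinely nonzero, and that the atoms she extracts for the many new tuples can be chosen \emph{coherently} so that the resulting $M$ satisfies the diagonal, cylindrifier, and transposition network axioms rather than just matching $b$ on the single demanded tuple. This is exactly the place where $\A\subseteq_c\Nr_n\C$ (as opposed to just $\A\subseteq\Nr_n\C$) is indispensable — completeness of the embedding ensures $\sum\At(\A)=1$ is preserved in $\C$, which is what makes the nonzero witnesses propagate; and it is the place where one must invoke the properties of $sc$-words from Definition \ref{subs} (independence of the chosen word representing $\hat w=\bar a$, and the normal form $w=s{\sf c}_{n-1}{\sf c}_n$) to make the $\widehat N$ manipulations legitimate. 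The $\K$-uniformity (any class between $\Sc$ and $\PEA$) is essentially free, since $F^m$ uses only cylindrifier moves and Definition \ref{def:hat}, Lemma \ref{lem:atoms2} are stated at the $\Sc$ level, so no extra work is needed to cover $\CA_n$, $\PA_n$, $\PEA_n$, etc.
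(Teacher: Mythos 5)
Your proposal is correct and follows essentially the same route as the paper: the invariant is precisely $\widehat N\neq 0$ in $\C$, the base case is $\widehat{N_0}=a\neq 0$, and the induction step labels the new tuples one atom at a time via Lemma \ref{lem:atoms2} applied to ${\sf c}_k\widehat N\cdot{\sf s}_{\bar a}b$, with the coherence/consistency checks you anticipate carried out through the auxiliary facts about $\widehat N$ under cylindrification and substitution. The only cosmetic difference is in the final clause, where the paper simply notes that complete representability gives $\A\in S_c\Nr_n\K_{\omega}$ and applies the first part directly with $m=\omega$, rather than passing through finite reducts and a limit argument.
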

\begin{proof}
The proof of the first part is based on repeated use of
lemma ~\ref{lem:atoms2}. We first show (*):
\begin{enumarab}
\item For any $x\in\C\setminus\set0$ and any
finite set $I\subseteq m$ there is a network $N$ such that
$\nodes(N)=I$ and $x\;.\;\widehat N\neq 0$.
\item
For any networks $M, N$ if
$\widehat M\;.\;\widehat N\neq 0$ then $M\equiv^{\nodes(M)\cap\nodes(N)}N$.
\end{enumarab}
We define the edge labelling of $N$ one edge
at a time. Initially no hyperedges are labelled.  Suppose
$E\subseteq\nodes(N)\times\nodes(N)\ldots  \times\nodes(N)$ is the set of labelled hyper
edges of $N$ (initially $E=\emptyset$) and
$x\;.\;\prod_{\bar c \in E}{\sf s}_{\bar c}N(\bar c)\neq 0$.  Pick $\bar d$ such that $\bar d\not\in E$.
Then there is $a\in\At(\A)$ such that
$x\;.\;\prod_{\bar c\in E}{\sf s}_{\bar c}N(\bar c)\;.\;{\sf s}_{\bar d}a\neq 0$.

Include the edge $\bar d$ in $E$.  Eventually, all edges will be
labelled, so we obtain a completely labelled graph $N$ with $\widehat
N\neq 0$.
it is easily checked that $N$ is a network.

For the second part, if it is not true that
$M\equiv^{\nodes(M)\cap\nodes(N)}N$, then there are is
$\bar c \in{}\nodes(M)\cap\nodes(N)$ such that $M(\bar c )\neq N(\bar c)$.
Since edges are labelled by atoms we have $M(\bar c)\cdot N(\bar c)=0,$
so $0={\sf s}_{\bar c}0={\sf s}_{\bar c}M(\bar c)\;.\; {\sf s}_{\bar c}N(\bar c)\geq \widehat M\;.\;\widehat N$.

Next, we show that (**):

\begin{enumerate}
\item\label{it:-i}
If $i\not\in\nodes(N)$ then ${\sf c}_i\widehat N=\widehat N$.

\item \label{it:-j} $\widehat{N Id_{-j}}\geq \widehat N$.

\item\label{it:ij} If $i\not\in\nodes(N)$ and $j\in\nodes(N)$ then
$\widehat N\neq 0 \rightarrow \widehat{N[i/j]}\neq 0$,
where $N[i/j]=N\circ [i|j]$

\item\label{it:theta} If $\theta$ is any partial, finite map $n\to n$
and if $\nodes(N)$ is a proper subset of $n$,
then $\widehat N\neq 0\rightarrow \widehat{N\theta}\neq 0$.
\end{enumerate}

The first part is easy.
The second part is by definition of $\;\widehat{\;}$. For the third part, suppose
$\widehat N\neq 0$.  Since $i\not\in\nodes(N)$, by part~\ref{it:-i},
we have ${\sf c}_i\widehat N=\widehat N$.  By cylindric algebra axioms it
follows that $\widehat N\cdot {\sf d}_{ij}\neq 0$.  From the above
there is a network $M$ where $\nodes(M)=\nodes(N)\cup\set i$ such that
$\widehat M\cdot\widehat N\cdot {\sf d}_{ij}\neq 0$.  From the first part, we
have $M\supseteq N$ and $M=N[i/j]$.
Hence $\widehat{N[i/j]}\neq 0$.

For the final part
(cf. \cite[lemma~13.29]{HHbook}), since there is
$k\in n\setminus\nodes(N)$, \/ $\theta$ can be
expressed as a product $\sigma_0\sigma_1\ldots\sigma_t$ of maps such
that, for $s\leq t$, we have either $\sigma_s=Id_{-i}$ for some $i<n$
or $\sigma_s=[i/j]$ for some $i, j<n$ and where
$i\not\in\nodes(N\sigma_0\ldots\sigma_{s-1})$.
Now apply parts~\ref{it:-j} and \ref{it:ij}.

Now we prove the required. Suppose that
$\A\subseteq_c\Nr_n\C$ for some $\C\in\K_m,$ then \pe\ always
plays networks $N$ with $\nodes(N)\subseteq n$ such that
$\widehat N\neq 0$. In more detail, in the initial round, let \pa\ play $a\in \At\A$.
\pe\ plays a network $N$ with $N(0, \ldots, n-1)=a$. Then $\widehat N=a\neq 0$.
At a later stage suppose \pa\ plays the cylindrifier move
$(N, \langle f_0, \ldots, f_{n-2}\rangle, k, b, l)$
by picking a
previously played network $N$ and $f_i\in \nodes(N), \;l<n,  k\notin \{f_i: i<n-2\}$,
and $b\leq {\sf c}_k N(f_0,\ldots,  f_{i-1}, x, f_{i+1}, \ldots, f_{n-2})$.

Let $\bar a=\langle f_0\ldots f_{i-1}, k, f_{i+1}, \ldots f_{n-2}\rangle.$
Then by  (*), we have that ${\sf c}_k\widehat N\cdot {\sf s}_{\bar a}b\neq 0$
and so by item 1 in (**),   there is a network  $M$ such that
$\widehat{M}\cdot\widehat{{\sf c}_kN}\cdot {\sf s}_{\bar a}b\neq 0$.
Hence
$$M(f_0,\dots, f_{i-1}, k, f_{i-2}, \ldots, f_{n-2})=b,$$ and $M$ is the required response.

The last part follows from the fact that if $\A$ is completely representable,
then $\A\in S_c\Nr_n\sf QEA_{\omega}$, witness theorem \ref{complete} below.
\end{proof}
We shall also need:
\begin{theorem}\label{AU} Let $2<n<\omega$. Let $\K\in \{\Df, \Sc, \CA, \PA, \PEA\}$. 
If $\A\in \K_n$ is generated by $\{a\in \A: \Delta a\neq n\}$, then
$\A$ is completely representable if and only if $\Rd_{df}\A$ is completely representable.
\end{theorem}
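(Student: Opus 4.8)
The plan is to treat the two implications separately, since they have very different characters. The direction ``$\A$ completely representable $\Rightarrow$ $\Rd_{df}\A$ completely representable'' uses nothing about the generators: if $h\colon\A\to\wp(V)$ is a complete representation over a generalized space $V=\bigcup_{p\in I}{}^nU_p$, then the $\Df_n$-reduct of the full set algebra on $V$ is again a set algebra over the same base, and composing $h$ with the forgetful map gives an embedding of $\Rd_{df}\A$ onto a $\Df_n$-subalgebra of it; since the Boolean operations and the cylindrifiers are untouched, $h(\sum X)=\bigcup h[X]$ (for $X\subseteq\A$ with $\sum X$ defined) still holds, so this is a complete representation of $\Rd_{df}\A$. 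The same remark settles the easy halves of the analogous statements for the other reducts.

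For the converse I would adapt the usual sub-dimensional generation argument (cf.\ \cite{AU}) to the setting of complete representations. Fix a complete representation $g\colon\Rd_{df}\A\to\wp(V)$, $V=\bigcup_{p\in I}{}^nU_p$. Complete representability forces atomicity, and $\At\A=\At(\Rd_{df}\A)$ since the Boolean and cylindric structure are shared, so $g$ is an atomic representation with $\bigcup_{a\in\At\A}g(a)=V$. Call $s\in V$ \emph{coherent} if for all $i,j<n$ one has $s_i=s_j\iff s\in g(\diag{i}{j})$, and for all $x\in\A$ and all $\tau$ in the signature of $\K$ one has $s\in g(\s_\tau x)\iff s\circ\tau\in g(x)$; let $W$ be the set of coherent points and put $h(a)=g(a)\cap W$. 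By construction the $h(\diag{i}{j})$ and $h(\s_\tau a)$ are the genuine set-theoretic diagonals and substitutions relative to $W$, and completeness passes from $g$ to $h$ because $h(a)=g(a)\cap W$ and $g$ preserves arbitrary sums. So $h$ is a complete $\K_n$-representation provided (i) $W$ carries the cylindrifiers correctly over a generalized space, and (ii) $h$ is injective, i.e.\ $g(a)\cap W\neq\emptyset$ for all $a\in\A\smallsetminus\set{0}$. Point (ii) is exactly where $\A=\Sg^{\A}\set{b\in\A:\Delta b\neq n}$ enters: it suffices to check it for a generator $b$ with $k\notin\Delta b$, and for such $b$ we have $\cyl k b=b$, so $g(b)$ is a cylinder in the $k$-th direction, while the cylindric/polyadic identities relating $\diag{i}{j}$ and $\s_\tau$ to a coordinate on which an element is cylindrical (e.g.\ $\s_{[k/j]}b=b$) give precisely the slack in coordinate $k$ needed to slide a witness of $g(b)$ into $W$.

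The main obstacle is point (i): removing the non-coherent points may destroy the feature that the base is a disjoint union of full cubes closed under coordinate-replacement, so $W$ itself need not be a generalized space and $g\restr W$ is a priori only a relativized representation. Showing that it can nonetheless be repaired to an honest complete $\K_n$-representation over a generalized space is the heart of the matter, and is again where the sub-dimensional generation is indispensable; this is the portion of the argument I would take over essentially verbatim from the representability version in \cite{AU}. A few reductions trim the case analysis: for $\K=\PEA$ the substitutions $\s_\tau$ are term-definable from the $\cyl i$ and $\diag{i}{j}$ once a spare dimension is available (and $n\geq 3$), so $\PEA$ reduces to $\CA$; for $\K\in\set{\Sc,\PA}$ there are no diagonals, and one instead repairs substitution-defects directly, using that $\s_{[i/j]}$ is completely additive with $\s_{[i/j]}b=b$ whenever $i\notin\Delta b$; and $\K=\Df$ is vacuous since $\Rd_{df}\A=\A$. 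Finally, when $\A$ is countable the whole converse can alternatively be run game-theoretically: a complete representation of $\Rd_{df}\A$ gives \pe\ a \ws\ in the cylindrifier-only game $F^{\omega}(\At(\Rd_{df}\A))$, the hypothesis $\A=\Sg^{\A}\set{b:\Delta b\neq n}$ lets her upgrade it to a \ws\ in the full $\K$-atomic game $F^{\omega}(\At\A)$, and then theorem \ref{thm:n} together with the standard converse for countable atomic algebras gives that $\A$ is completely representable.
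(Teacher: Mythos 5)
Your proposal is correct in outline and follows essentially the same route as the paper, whose entire proof of this theorem is the single citation to \cite[Proposition 4.10]{AU}; your easy direction is the standard reduct argument, and you explicitly defer the crux of the converse (repairing the set of coherent points into a genuine generalized space, using the sub-dimensional generators) to that same Proposition 4.10. The only caveat worth flagging is that checking non-vanishing of $h$ on generators does not by itself give injectivity (kernels are ideals, not subalgebras); one should instead verify $h(a)\neq\emptyset$ for every atom $a$, which suffices since the given representation is atomic and $\At\A=\At(\Rd_{df}\A)$ — but this is handled in the cited source.
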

\begin{proof} \cite[Proposition 4.10]{AU}.
\end{proof}
The idea of the proof of our next theorem is summarized in the following:

\begin{enumarab}

\item  We construct a labelled hypergraph $M$ that can be viewed as an $n$ homogeneous  model of a certain 
first order rainbow theory. This model gets its labels from a rainbow signature.
By $n$ homogeneous we mean that every partial isomorphism of $M$ of size $\leq n$ can be extended
to an automorphism of $M$.

\item We have a shade of red $\rho$, outside the signature; this is basically {\it the blur; a non- principal ultrafilter}, 
in the term algebra which consists of finite and cofinite
sets on a countable set, but
$\rho$  can be used as a label.
\item We build a relativized set algebra based on $M$, by discarding all assignments whose edges are labelled
by the shade of red getting $W\subseteq {}^nM$.

\item $W$ is definable in $^nM$ be an $L_{\infty, \omega}$ formula
hence the semantics with respect to  $W$ coincides with classical Tarskian semantics (when assignments are in $^nM$).
This is proved using certain $n$ back and forth systems.

\item The set algebra $\A$ based on $W$ (consisting of sets of sequences (without shades of reds labelling edges)
satisfying formulas in $L^n$ in the given signature)
will be an atomic simple representable algebra such that its completion is not representable; the atoms will be coloured graphs.
The completion $\C$, which is the complex algebra of the rainbow atom structure, 
will consist of interpretations of $L_{\infty,\omega}^n$ formulas; though represented over $W$,
it will not be, and cannot be, representable in the classical  sense. In fact, its $\Sc$ reduct will be outside $S\Nr_n\Sc_{n+3}$.

\item The last part in the previous item is proved by embedding a finite rainbow polyadic equality algebra whose $\Sc$ reduct is not in 
$S\Nr_n\Sc_{n+3}$ in $\C$; so that the 
term algebra which is obtained by blowing up and blurring this finite rainbow algebra is representable, 
while the $\Sc$ reduct of $\C$, the \d\ completion of $\A$, is outside
$S\Nr_n\Sc_{n+3}$, too.
\end{enumarab}

In the next proof some parts overlap with parts in \cite{Hodkinson}; 
we include them to make the proof self contained as much as possible 
referring to \cite{Hodkinson} every time we do this.

\begin{theorem}\label{can} Let $n$ be finite $>2$. Then there exists a countable representable atomic $\A\in \PEA_n$ such that
$\Rd_{sc}\Cm\At\A$ is not in $S\Nr_n\Sc_{n+3}$, and $\Rd_{df}\A$ is not completely representable. 
In particular, for any finite $n>2$, any ${\sf K}\in \{\Sc, \sf PA, \sf PEA, \sf CA\}$ any class
between $S\Nr_n{\sf K}_{n+3}$ and ${\sf RK}_n$ is not atom-canonical.
\end{theorem}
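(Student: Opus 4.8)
The plan is to carry out a blow up and blur construction on a finite rainbow polyadic equality algebra, following the route sketched in the enumerated list preceding the statement. First I would fix the finite rainbow algebra: take $\sf R = {\sf PEA}_{\mathsf{G},\Gamma}$ with $\mathsf{G}$ having enough greens and $\Gamma$ finitely many reds, chosen so that $\sf R$ is finite and its $\Sc$ reduct is \emph{not} in $S\Nr_n\Sc_{n+3}$. The non-membership of $\Rd_{sc}\sf R$ in $S\Nr_n\Sc_{n+3}$ should be witnessed by the game $F^{n+3}$ of Definition~\ref{def:games}: by (the contrapositive of) Theorem~\ref{thm:n}, it suffices to show that \pa\ has a \ws\ in $F^{n+3}(\At\sf R)$ using only $n+3$ nodes. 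This in turn follows by translating back to a pebble--graph game on the rainbow coloured graphs of Definition~\ref{def} and having \pa\ force a ``cone contradiction'': pump enough green cones with pairwise distinct tints into a base of size $n-1$ so that \pe\ runs out of reds respecting the forbidden triples in~\eqref{forb:match}. The parameter comparison here is exactly the comparison between the number of greens and reds familiar from the rainbow literature, and it is this combinatorial step that I expect to be the main obstacle: pinning down the precise sizes of $\mathsf{G}$ and $\Gamma$ so that $\pa$ wins $F^{n+3}$ but the blown-up term algebra still admits a representation.

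Next I would blow up and blur: split each red atom $r_{ij}$ of $\At\sf R$ into infinitely many copies $r_{ij}^\ell$ ($\ell<\omega$), obtaining a new $\sf PEA_n$ atom structure $\At$. Concretely this is done model-theoretically as in the list: build an $n$-homogeneous labelled hypergraph $M$ over the rainbow signature augmented by a shade of red $\rho$, form $W = \{\bar x\in{}^nM : \text{no edge of }\bar x\text{ is labelled }\rho\}$, and let $\A$ be the set algebra on $W$ of sets of sequences satisfying $L^n$-formulas in the (unshaded) rainbow signature. Using the $n$ back-and-forth systems one checks that $W$ is $L_{\infty,\omega}^n$-definable in $^nM$, so Tarskian semantics on $^nM$ agrees with the relativized semantics on $W$; hence $\A$ is a genuine (classically) representable simple atomic $\sf PEA_n$ whose atoms are isomorphism types of coloured graphs, and $\At\A \cong \At$. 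The two blurs here are the analogues of $\delta$ (reds with equal indices) and $\rho$ (the flexible non-principal ultrafilter of reds with distinct indices) from Theorem~\ref{decidability}; the finite-cofinite nature of the blurs is what makes $\Tm\At = \A$ representable while preventing $\sf R$ from embedding into it.

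Then comes the completion. The complex algebra $\Cm\At$ is the \d\ completion of $\A$, and it consists of interpretations of $L_{\infty,\omega}^n$-formulas over $W$. The key claim is that the original finite rainbow algebra $\sf R$ embeds into $\Cm\At$ by sending each red $r_{ij}$ to the (infinitary) join $\sum_\ell r_{ij}^\ell$ of its splitted copies, which exists precisely because $\Cm\At$ is complete; all other atoms map identically, and one verifies this respects the Boolean, cylindrifier, substitution and diagonal operations (the forbidden-triple structure is preserved under the identification of the copies). Since $S\Nr_n\Sc_{n+3}$ is closed under subalgebras and $\Rd_{sc}\sf R\notin S\Nr_n\Sc_{n+3}$, we get $\Rd_{sc}\Cm\At\A\notin S\Nr_n\Sc_{n+3}$. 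For the ``not completely representable'' clause: a complete representation of $\Rd_{df}\A$ would, by Theorem~\ref{AU} (applicable since $\A$ is generated by elements whose dimension set is $\neq n$), lift to a complete representation of $\A$, which would induce an (ordinary) representation of $\Cm\At\A$ and hence of its $\Sc$ reduct — contradicting $\Rd_{sc}\Cm\At\A\notin S\Nr_n\Sc_{n+3}\supseteq\RCA_n$-reducts. Finally, the ``in particular'' statement follows formally: any class $\sf K$ with $S\Nr_n{\sf K}_{n+3}\subseteq\sf K\subseteq\sf R\sf K_n$ has $\A\in\sf K$ (as $\A$ is representable) but $\Cm\At\A\notin\sf K$ (as its $\Sc$ reduct escapes $S\Nr_n\Sc_{n+3}$, hence $\Cm\At\A\notin S\Nr_n{\sf K}_{n+3}$ and a fortiori $\notin$ any subclass thereof that still contains it would be a contradiction), so $\sf K$ is not atom-canonical; since $\Cm\At\A$ is the \d\ completion of the atomic $\A$, it is not closed under \d\ completions either. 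The delicate points to get right are the exact rainbow parameters in the first paragraph and the verification that the embedding of $\sf R$ into $\Cm\At$ is a $\Sc$-homomorphism; the rest is bookkeeping parallel to \cite{Hodkinson}.
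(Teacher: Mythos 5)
Your proposal is correct and follows essentially the same route as the paper: blow up and blur the finite rainbow algebra ${\sf PEA}_{n+1,n}$ (the paper's choice of parameters is $n+1$ greens versus $n$ reds, with \pa\ winning $F^{n+3}$ in $n+2$ rounds by bombarding \pe\ with cones on a common base), build the $n$-homogeneous model with the extra shade $\rho$, and embed the finite algebra into $\Cm\At$ by sending each red atom (red coloured graph, not red colour) to the join of its copies. The only step you should make explicit is that the contrapositive of Theorem~\ref{thm:n} a priori only excludes $S_c\Nr_n\Sc_{n+3}$, and one passes to $S\Nr_n\Sc_{n+3}$ by noting that a finite algebra coincides with its canonical extension, so for it membership in $S\Nr_n\Sc_{n+3}$ and in $S_c\Nr_n\Sc_{n+3}$ are equivalent.
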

\begin{proof}
We blow up and blur a finite rainbow polyadic equality algebra, namely, $\R(\Gamma)$
where $\Gamma$ is the complete irreflexive graph $n$, and the greens
are  ${\sf G}=\{\g_i:1\leq i<n-1\}
\cup \{\g_0^{i}: 1\leq i\leq n+1\},$ we denote this finite rainbow algebra 
by ${\sf PEA}_{n+1, n}.$

Let $\At$ be the rainbow atom structure similar to that in \cite{Hodkinson} except that we have $n+1$ greens and
$n$ indices for reds, so that the rainbow signature  $L$  now consists of $\g_i: 1\leq i<n-1$, $\g_0^i: 1\leq i\leq n+1$,
$\w_i: i<n-1$,  $\r_{kl}^t: k<l\in n$, $t\in \omega$,
binary relations, and $\y_S$, $S\subseteq n+1$,
$n-1$ ary relations. We also have a shade of red $\rho$; the latter is a binary relation but is {\it outside the rainbow signature},
though  it is used to label coloured graphs during a certain game devised to prove representability 
of the term algebra \cite{Hodkinson}, and in fact \pe\ can win the $\omega$ rounded game
and build the $n$ homogeneous model $M$ by using $\rho$ whenever
she is forced a red, as will be shown in a while. 

So $\At$ is obtained from the rainbow atom structure of the algebra $\A$ defined in \cite[section 4.2 starting p. 25]{Hodkinson}  
truncating the greens to be finite (exactly $n+1$ greens) and everything else is the same. 
In \cite{Hodkinson} it shown that the complex algebra $\Cm\At\A$ is not representable; 
the result to be obtained now, because the greens are finite but still outfit the red, is sharper; 
it will imply that
$\Rd_{\Sc}\Cm\At\notin S\Nr_n\Sc_{n+3}$.


Now $\Tm\At\in \sf RPEA_n$; this can be proved exactly like  in \cite{Hodkinson}.
Strictly speaking the cylindric reduct of $\Tm\At$ can be proved representable like in 
\cite{Hodkinson}; representating the polyadic operations 
is straightforward, by swapping variables in representing formulas.

Let us spell out more details. 
The first three items are very similar to Hodkinson's arguments. The only difference between our atom structure and his is that we use finitely many 
greens, while he uses infinitely many. 
The greens do not contribute to this part of the proof; all the other colours do.
The reds are the most important in this part of the proof.

\begin{enumarab}
\item {\sf Constructing an $n$ homogeneous model}

We define {\it a new class} of coloured graphs $\GG$; 
they are obtained from  $\sf CGR$ by adding the shade of red $\rho$,
and new forbidden triples of reds involving the shade of red $\rho$, namely,
$(\r_{jk}^i, \r_{j'k'}^{i'}, \rho)$ for any $i,j,k,i',j',k'\in n$ and $(\r_{jk}^i, \rho, \rho)$ for any 
$i,j,k\in n+1$. Strictly speaking the reds here are different from the reds 
specified in the rainbow colours defined in \ref{def}, for they have superscripts coming
from $\omega$.
To deal with such reds we stipulate that  $(\r_{ij}^l, \r_{j'k'}^{l'}, \r_{i^*k^*}^{l''})$ for $i,j,j',k',i^*, k^*\in n$ 
is forbidden  unless $l=l'=l''$ and $i=i^*,\; j=j'\mbox{ and }k'=k^*.$

On the other hand ${\sf PEA}_{n+1, n}$ is the standard rainbow algebra as defined in \ref{def}.

Now one can view the complete undirected coloured graphs in $\GG$ as {\it first order} models for the 
rainbow signature {\it together with $\rho$ viewed as a binary relation}. 
(In case we have infinitely many greens like in \cite{Hodkinson} the rainbow theory \cite{HHbook2} is 
an $L_{\omega_1, \omega}$ theory.)

Using  the standard rainbow argument adopted in \cite{Hodkinson},  one shows that 
there is a countable $n$ homogeneous model  $M\in \GG$ with the following
property:\\
$\bullet$ If $\triangle \subseteq \triangle' \in \GG$, $|\triangle'|
\leq n$, and $\theta : \triangle \rightarrow M$ is an embedding,
then $\theta$ extends to an embedding $\theta' : \triangle'
\rightarrow M$.  

To prove this we use, like Hodkinson, a simple game.
Two players, $\forall$ and $\exists$, play a game to build a
labelled graph $M$. They play by choosing a chain $\Gamma_0
\subseteq \Gamma_1 \subseteq\ldots $ of finite graphs in $\GG$; the
union of
the chain will be the graph $M.$
There are $\omega$ rounds. In each round, $\forall$ and $\exists$ do
the following. Let $ \Gamma \in \GG$ be the graph constructed up to
this point in the game. $\forall$ chooses $\triangle \in \GG$ of
size $< n$, and an embedding $\theta : \triangle \rightarrow
\Gamma$. He then chooses an extension $ \triangle \subseteq
\triangle^+ \in \GG$, where $| \triangle^+ \backslash \triangle |
\leq 1$. These choices, $ (\triangle, \theta, \triangle^+)$,
constitute his move. $\exists$ must respond with an extension $
\Gamma \subseteq \Gamma^+ \in \GG$ such that $\theta $ extends to an
embedding $\theta^+ : \triangle^+ \rightarrow \Gamma^+$. Her
response ends the round.
The starting graph $\Gamma_0 \in \GG$ is arbitrary. We claim that \pe\ can always find a suitable
extension $\Gamma^+ \in \GG$.  Let $\Gamma \in \GG$ be the graph built at some stage, and suppose that 
$\forall$ choose the graphs $ \triangle \subseteq \triangle^+ \in
\GG$ and the embedding $\theta : \triangle \rightarrow \Gamma$.
Thus, his move is $ (\triangle, \theta, \triangle^+)$. We may assume with no loss of generality that $\forall$
actually played $ ( \Gamma \upharpoonright F, Id_F, \triangle^+)$,
where $\Gamma \upharpoonright F \subseteq \triangle^+ \in \GG$,
$\triangle^+ \backslash F = \{\delta\}$, and $\delta \notin \Gamma$.
Then $\forall$ has to build a labelled graph $ \Gamma^*
\supseteq \Gamma$, whose nodes are those of $\Gamma$ together with
$\delta$, and whose edges are the edges of $\Gamma$ together with
edges from $\delta$ to every node of $F$. The labelled graph
structure on $\Gamma^*$ is given by\\
$\bullet$ $\Gamma$ is an induced subgraph of $\Gamma^*$ (i.e., $
\Gamma \subseteq \Gamma^*$)\\
$\bullet$ $\Gamma^* \upharpoonright ( F \cup \{\delta\} ) =
\triangle^+$.
Now $ \exists$ must extend $ \Gamma^*$ to a complete
graph on the same node and complete the colouring yielding a graph
$ \Gamma^+ \in \GG$. Thus, she has to define the colour $
\Gamma^+(\beta, \delta)$ for all nodes $ \beta \in \Gamma \backslash
F$, in such a way as to meet the required conditions.  The strategy of \pe\ is as follows:
\begin{enumroman}

\item If there is no $f\in F$,
such that $\Gamma^*(\beta, f), \Gamma^*(\delta ,f)$ are coloured $\g_0^t$ and $\g_0^u$
for some $t,u$, then \pe\ defined $\Gamma^+(\beta, \delta)$ to  be $\w_0$.

\item Otherwise, if for some $i$ with $0<i<n-1$, there is no $f\in F$
such that $\Gamma^*(\beta,f), \Gamma^*(\delta, f)$ are both coloured $\g_i$, then \pe\
defines the colour $\Gamma^+(\beta,\delta)$ to
to be $\w_i$ say the least such.

\item Otherwise $\delta$ and $\beta$ are both the apexes on $F$ in $\Gamma^*$ that induce
the same linear ordering on (there are no green edges in $F$ because
$\Delta^+\in \GG$, so it has no green triangles).
Now \pe\ has no choice but to pick a red. The colour she chooses is $\rho.$

\end{enumroman}
This defines the colour of edges. Now for hyperedges,
for  each tuple of distinct elements
$\bar{a}=(a_0,\ldots, a_{n-2})\in {}^{n-1}(\Gamma^+)$
such that $\bar{a}\notin {}^{n-1}\Gamma\cup {}^{n-1}\Delta$ and with no edge $(a_i, a)$
coloured green in  $\Gamma^+$, \pe\ colours $\bar{a}$ by $\y_{S}$
where
$S=\{i <\omega: \text { there is a $i$ cone with base  } \bar{a}\}$.
Notice that $|S|\leq F$. This strategy works \cite[lemma 2.7]{Hodkinson}.

Now there are only countably many
finite graphs in $\GG$ up to isomorphism, and each of the graphs
built during the game is finite. Hence $\forall$ can  play
every possible $(\triangle, \theta, \triangle^+)$ (up to
isomorphism) at some round in the game. Suppose he does this, and
let $M$ be the union of the graphs played in the game. Then 
$M$ is as required \cite{Hodkinson}.

\item {\sf Relativization, back and forth systems ensuring that relativized semantics coincide with the classical semantics}

Let $W = \{ \bar{a} \in {}^n M : M \models ( \bigwedge_{i < j < n,
l < n} \neg \rho(x_i, x_j))(\bar{a}) \}$.
Here assignments who have a $\rho$ labelled edge are discarded.

For an $L^n_{\infty \omega}$-formula $\varphi $,  define
$\varphi^W$ to be the set $\{ \bar{a} \in W : M \models_W \varphi
(\bar{a}) \}$.  Then   set $\A$ to be the relativised set algebra with domain
$$\{\varphi^W : \varphi \,\ \textrm {a first-order} \;\ L^n-
\textrm{formula} \}$$  and unit $W$, endowed with the algebraic
operations ${\sf d}_{ij}, {\sf c}_i, $ ect., in the standard way. 

Let $\cal S$ be set algebra with domain  $\wp ({}^{n} M)$ and
unit $ {}^{n} M $. Then the map $h : \A
\longrightarrow \cal S$ given by $h:\varphi ^W \longmapsto \{ \bar{a}\in
{}^{n} M: M \models \varphi (\bar{a})\}$ can be checked to be well -
defined and one-one; this is a representation of $\A$ \cite[Proposition 3.13, Proposition 4.2]{Hodkinson}.
This follows from the fact that  classical semantics and relativized semantic with respect to $W$ 
coincide for first order formulas of the rainbow signature which follows from the  {\it $n$-homogeneity built into
$M$}, that  implies that the set of all partial
isomorphisms of $M$ of cardinality at most $n$ forms an
$n$-back-and-forth system. 

Let us elaborate some more. 
Let $\chi$ be a permutation of the set $\omega \cup \{ \rho\}$. Let
$ \Gamma, \triangle \in \sf \GG$ have the same size, and let $ \theta :
\Gamma \rightarrow \triangle$ be a bijection. We say that $\theta$
is a $\chi$-\textit{isomorphism} from $\Gamma$ to $\triangle$ if for
each distinct $ x, y \in \Gamma$,
\begin{itemize}
\item If $\Gamma ( x, y) = \r_{jk}^i$,
\begin{equation*}
\triangle( \theta(x),\theta(y)) =
\begin{cases}
\r_{jk}^{\chi(i)}, & \hbox{if $\chi(i) \neq \rho$} \\
\rho,  & \hbox{otherwise.} \end{cases}
\end{equation*}
\end{itemize}

\begin{itemize}
\item If $\Gamma ( x, y) = \rho$, then
\begin{equation*}
\triangle( \theta(x),\theta(y)) \in
\begin{cases}
\r_{jk}^{\chi(\rho)}, & \hbox{if $\chi(\rho) \neq \rho$} \\
\rho,  & \hbox{otherwise.} \end{cases}
\end{equation*}
\end{itemize}

For any permutation $\chi$ of $\omega \cup \{\rho\}$, $\Theta^\chi$
is the set of partial one-to-one maps from $M$ to $M$ of size at
most $n$ that are $\chi$-isomorphisms on their domains. We write
$\Theta$ for $\Theta^{Id_{\omega \cup \{\rho\}}}$.

Then like the proof of \cite[Lemma 3.10]{Hodkinson}, 
for any permutation $\chi$ of $\omega \cup \{\rho\}$, $\Theta^\chi$
is an $n$-back-and-forth system on $M$.

Using this we now  derive a connection between classical and
relativized semantics in $M$, over the set $W$:\\
Recall that $W$ is simply the set of tuples $\bar{a}$ in ${}^nM$ such that the
edges between the elements of $\bar{a}$ don't have a label involving
the red shade $\rho.$ Their labels come only from the rainbow signature. 
We can replace $\rho$-labels by suitable {\it red rainbow}
labels within an $n$-back-and-forth system. Thus, it can be arranged that the
system maps a tuple $\bar{b} \in {}^n M \backslash W$ to a tuple
$\bar{c} \in W$ and this will preserve any formula
containing no red rainbow relation symbols 
moved by the system.

Indeed, we can show that the
classical and $W$-relativized semantics agree.
$M \models_W \varphi(\bar{a})$ iff $M \models \varphi(\bar{a})$, for
all $\bar{a} \in W$ and all $L^n$-formulas $\varphi$, hence as claimed $\A$ defined above is representable as a set algebra.

The proof is by induction on $\varphi$ \cite[Proposition 3.13]{Hodkinson}. 
If $\varphi$ is atomic, the
result is clear; and the Boolean cases are simple.
Let $i < n$ and consider $\exists x_i \varphi$. If $M \models_W
\exists x_i \varphi(\bar{a})$, then there is $\bar{b} \in W$ with
$\bar{b} =_i \bar{a}$ and $M \models_W \varphi(\bar{b})$.
Inductively, $M \models \varphi(\bar{b})$, so clearly, $M \models_W
\exists x_i \varphi(\bar{a})$.
For the (more interesting) converse, suppose that $M \models_W
\exists x_i \varphi(\bar{a})$. Then there is $ \bar{b} \in {}^n M$
with $\bar{b} =_i \bar{a}$ and $M \models \varphi(\bar{b})$. Take
$L_{\varphi, \bar{b}}$ to be any finite subsignature of $L$
containing all the symbols from $L$ that occur in $\varphi$ or as a
label in $M \upharpoonright \rng(\bar{b})$. Choose a permutation $\chi$ of
$\omega \cup \{\rho\}$ fixing any $i'$ such that some $\r_{jk}^{i'}$
occurs in $L_{\varphi, \bar{b}}$ and moving $\rho$.
Let $\theta = Id_{\{a_m : m \neq i\}}$. Take any distinct $l, m \in
n \setminus \{i\}$. If $M(a_l, a_m) = \r_{jk}^{i'}$, then $M( b_l,
b_m) = \r_{jk}^{i'}$ because $ \bar{a} = _i \bar{b}$, so $\r_{jk}^{i'}
\in L_{\varphi, \bar{b}}$ by definition of $L_{\varphi, \bar{b}}$.
So, $\chi(i') = i'$ by definition of $\chi$. Also, $M(a_l, a_m) \neq
\rho$ because $\bar{a} \in W$. It now follows that
$\theta$ is a $\chi$-isomorphism on its domain, so that $ \theta \in
\Theta^\chi$.
Extend $\theta $ to $\theta' \in \Theta^\chi$ defined on $b_i$,
using the ``forth" property of $ \Theta^\chi$. Let $
\bar{c} = \theta'(\bar{b})$. Now by choice of of $\chi$, no labels
on edges of the subgraph of $M$ with domain $\rng(\bar{c})$ involve
$\rho$. Hence, $\bar{c} \in W$.
Moreover, each map in $ \Theta^\chi$ is evidently a partial
isomorphism of the reduct of $M$ to the signature $L_{\varphi,
\bar{b}}$. Now $\varphi$ is an $L_{\varphi, \bar{b}}$-formula.
We have $M \models \varphi(\bar{a})$ iff $M \models \varphi(\bar{c})$.
So $M \models \varphi(\bar{c})$. Inductively, $M \models_W
\varphi(\bar{c})$. Since $ \bar{c} =_i \bar{a}$, we have $M
\models_W \exists x_i \varphi(\bar{a})$ by definition of the
relativized semantics. This completes the induction, and proves that 
$h:\A\to \cal S$ above is indeed a representation of 
$\A$. However, it is {\it not} a complete representation. This will be clear from our subsequent discussion,
when we explicity describe the atoms of $\A$ and show that their union is not $^nM$; it is $W$.
In fact $\A$ has no complete classical representation; even more its $\Df$ reduct does not have
such a representation as stated in the 
theorem to be proved in a while.
\item {\sf The atoms and the complex algebra; the \d\ completion}

Recall that $L$ denotes the rainbow signature (without $\rho$).
The logics $L^n$ and $L^n_{\infty \omega}$ are taken in this
signature.

We show that $\A$ is atomic and   
we give the atoms, following Hodkinson,  a {\it syntactical description} in terms of special formulas
in the rainbow signature specified above taken in $L^n$; this will enable us to transparently define the 
embedding of $\sf PEA_{n+1, n}$ into the hitherto constructed 
complex algebra. 

Now every atom in the (representable) relativized set algebra $\A$ is {\it uniquely defined by an $\sf MCA$ formula} \cite{Hodkinson}.
 A formula $ \alpha$  of  $L^n$ is said to be $\sf MCA$
('maximal conjunction of atomic formulas') \cite[Definition 4.3]{Hodkinson}  if (i) $M \models \exists
x_0\ldots, x_{n-1} \alpha $ and (ii) $\alpha$ is of the form
$$\bigwedge_{i \neq j < n} \alpha_{ij}(x_i, x_j) \land \bigwedge\eta_{\mu}(x_0,\ldots, x_{n-1}),$$
where for each $i,j,\alpha_{ij}$ is either $x_i=x_i$ or $R(x_i,x_j)$ a binary relation symbol in the rainbow signature, 
and for each $\mu:(n-1)\to n$, $\eta_{\mu}$ is either $y_S(x_{\mu(0)},\ldots x_{\mu(n-2)})$ for some $y_S$ in the signature, 
if for all distinct $i,j<n$, $\alpha_{\mu(i), \mu(j)}$ is not equality nor green, otherwise it is
$x_0=x_0$.

A formula $\alpha$  being $\sf MCA$ says that the set it defines in ${}^n M$
is nonempty, and that if $M \models \alpha (\bar{a})$ then the graph
$M \upharpoonright \rng (\bar{a})$ is determined up to isomorphism
and has no edge whose label is of the form $\rho$. 
Now since we have for any permutation $\chi$ of $\omega \cup \{\rho\}$, $\Theta^\chi$
is an $n$-back-and-forth system on $M$, any two
tuples (graphs) satisfying $\alpha$ are isomorphic and one is mapped to the
other by the $n$-back-and-forth system $\Theta$ of partial isomorphisms from $M$ to $M$; they are the {\it same} coloured graph.

No $L^n_{\infty \omega}$- formula can distinguish any two graphs satisfying an $\sf MCA$ formula $\alpha$.
So $\alpha$
defines an atom of $\A$. Since the
$\sf MCA$ - formulas clearly cover $W$, the atoms defined by them are
dense in $\A$, hence $\A$ is atomic.  The coloured  graphs whose edges are not labelled by the shade of red 
$\rho$ (up to isomorphism) determined by $\sf MCA$ 
formulas are the atoms of $\A$.

In more detail, let $\varphi$ be any $L^n_{\infty\omega}$-formula, and $\alpha$ any
$\sf MCA$-formula. If $\varphi^W \cap \alpha^W \neq \emptyset $, then
$\alpha^W \subseteq \varphi^W $.
Indeed, take $\bar{a} \in  \varphi^W \cap \alpha^W$. Let $\bar{b} \in
\alpha^W$ be arbitrary. Clearly, the map $( \bar{a} \mapsto
\bar{b})$ is in $\Theta$. Also, $W$ is
$L^n_{\infty\omega}$-definable in $M$, since we have
$$ W = \{
\bar{a} \in {}^n M : M \models (\bigwedge_{i < j< n} (x_i = x_j \vee
\bigvee_{R \in L} R(x_i, x_j)))(\bar{a})\}.$$
We have $M \models_W \varphi (\bar{a})$
 iff $M \models_W \varphi (\bar{b})$. Since $M \models_W \varphi (\bar{a})$, we have
$M \models_W \varphi (\bar{b})$. Since $\bar{b} $ was arbitrary, we
see that $\alpha^W \subseteq \varphi^W$.
Let $$F = \{ \alpha^W : \alpha \,\ \textrm{an $\sf MCA$},
L^n-\textrm{formula}\} \subseteq \A.$$
Evidently, $W = \bigcup F$. We claim that
$\A$ is an atomic algebra, with $F$ as its set of atoms.
First, we show that any non-empty element $\varphi^W$ of $\A$ contains an
element of $F$. Take $\bar{a} \in W$ with $M \models_W \varphi
(\bar{a})$. Since $\bar{a} \in W$, there is an $\sf MCA$-formula $\alpha$
such that $M \models_W \alpha(\bar{a})$. Then $\alpha^W
\subseteq \varphi^W $. By definition, if $\alpha$ is an $\sf MCA$ formula
then $ \alpha^W$ is non-empty. If $ \varphi$ is
an $L^n$-formula and $\emptyset \neq \varphi^W \subseteq \alpha^W $,
then $\varphi^W = \alpha^W$. It follows that each $\alpha^W$ (for
$\sf MCA$ $\alpha$) is an atom of $\A$.
We can also conclude from this that $\A$ is simple, for if $\phi^W\in \A$ is non zero then there is an $\sf MCA$ formula 
$\alpha$ such that $\alpha^W\leq \phi^W$,
and $M\models \exists x_0\ldots x_{n-1}\alpha$, hence ${\sf c}_{(n)}\phi^{W}= {}^{n}M$.
Also $\A$ is {\it not} completely representable; this will follow from the fact that its 
completion is not representable. 

Let  $\C$  be the the relativized set algebra with domain
$$\{\varphi^W : \varphi \,\ \textrm {an} \;\ L_{\infty, \omega}^n-
\textrm{formula} \},$$  
and operations defined like for $\A$.
Then $\C$ is the \d\ completion of $\A$, that is $\C\cong\Cm\At\A$ 
\cite[Proposition 4.6]{Hodkinson}.

\item {\sf Embedding $\PEA_{n+1, n}$ in the complex algebra}

Hodkinson proves the non representability of the completion of his algebra $\A$ \cite{Hodkinson} 
using the greens, which are in his case infinite.
In case of the existence of a representation, he reaches a contradiction, by using infinitely many cones having a common base,  
to force an inconsistent triple of reds. So the greens play an essential role in this part of 
the proof. Here we show that if we truncate the greens to be finite, but as long as they outfit the red, 
then we still can force such an inconsistent triple using only {\it finitely} many 
cones, geting a sharper result. Roughly this finite number determines the number of dimensions $>n,$ which $\Cm\At\A$ cannot neatly embed 
into an algebra having this number as its dimension.

We have $\At=\At\A=\At\C$, and 
$\Tm\At\subseteq \A\subseteq \Cm\At\cong\C$; the former two are 
representable. We can proceed like \cite{Hodkinson} and show that though $\Rd_{ca}\C$ is representable in a relativized sense it is not
classically representable. But we can do better.

We show that the $\Sc$ reduct of the last is not in $S\Nr_n\Sc_{n+3}$.
This will be done by showing that the $\Sc$ reduct of ${\sf PEA}_{n+1, n}$ 
is not in $S\Nr_n\Sc_{n+3}$ and that ${\sf PEA}_{n+1, n}$ embeds into 
$\C$ as polyadic equality algebras.
Recall that the colours used for coloured graphs 
involved in building  the finite atom structure of the algebra ${\sf PEA}_{n+1, n}$ are:
\begin{itemize}

\item greens: $\g_i$ ($1\leq i\leq n-2)$, $\g_0^i$, $1\leq i\leq n+1,$

\item whites : $\w_i,  i\leq n-2,$

\item reds:  $\r_{ij},$ $i<j\in n,$

\item shades of yellow : $\y_S,  S\subseteq n+1$.

\end{itemize}
with {\it forbidden triples}
\vspace{-.2in}
\begin{eqnarray*}
&&\nonumber\\
(\g, \g^{'}, \g^{*}), (\g_i, \g_{i}, \w_i)
&&\mbox{any }1\leq i\leq  n-2  \\
(\g^j_0, \g^k_0, \w_0) &&\mbox{ any } 1\leq j, k\leq n+1\\
\label{forb:match}(\r_{ij}, \r_{j'k'}, \r_{i^*k^*}) &&i,j,i', k', i^*, j^*\in n,\\ \mbox{unless }i=i^*,\; j=j'\mbox{ and }k'=k^*.
\end{eqnarray*}
and no other triple is forbidden. 
One can say metaphorically that $\At$ is isomorphic to the rainbow atom structure obtained from the finite atom structure of ${\sf PEA}_{n+1, n}$ 
if each red $\r_{ij}$, $i<j<n$  is `split' into $\omega$ many copies $\r_{ij}^l$: $l\in \omega$,
and adding the consistency condition stated above, namely, 
$(\r_{ij}^l, \r_{j'k'}^{l'}, \r_{i^*k^*}^{l''})$ for $i,j,j',k',i^*, k^*\in n$ 
is forbidden  unless $l=l'=l''$ and $i=i^*,\; j=j'\mbox{ and }k'=k^*.$
But splitting has to do with {\it splitting atoms}, in our context {\it coloured graphs}, not colours; 
so this `image', which, all the same,  we find useful to highlight at this stage, will be made
precise in a while.

A coloured graph  is red
if at least one of its edges is labelled red. Here {\it we do not} have $\rho$, it simply does not exist in the set of available reds. 
Now the algebra ${\sf PEA}_{n+1, n}$ embeds into $\C$ as polyadic equality algebras.
The map is defined on the atoms then extended the obvious way to the whole algebra. 
Every $[a]: n\to \Gamma$, where $\Gamma$ a red graph is mapped to {\it the join of its copies}, which exists 
because $\C$ is complete.
A copy of a red graph is one that is isomorphic to this graph, modulo removing superscripts of reds.
Every other atom (graph) is mapped to itself.

More precisely, for brevity write $\r$ for $\r_{jk}$($j<k<n$).
If $\Gamma$ is a coloured graph using the colours in $\At \PEA_{n+1, n}$, and $a:n\to \Gamma$ is in $\At\PEA_{n+1,n}$,
then $a':n\to \Gamma'$ with $\Gamma'\in \sf CGR$
is a {\it copy} of $a:n\to \Gamma$, if for any non red binary colour $\sf c$ and any red $\r$ we have for any $i,j, k_0,\ldots k_{n-1}<n$: 
\begin{itemize}
\item $a(i)=a(j)\Longleftrightarrow a'(i)=a'(j),$
\item $(a(i), a(j))\in {\sf c} \Longleftrightarrow (a'(i), a'(j))\in {\sf c},$ 
\item $(a(i), a(j))\in \r \Longleftrightarrow (a'(i), a'(j))\in \r^l$ for some $l\in \omega,$
\item $M_a(a(k_0),\dots, a(k_{n-2}))=M_a'(a'(k_0),\ldots, a'(k_{n-2}))$ whenever defined.
\end{itemize}
In other words, all non red edges and $n-1$ tuples have the same colour (whenever defined) 
and  for all $i<j<n$, for every red $\r$, if  $(a(i), a(j))\in \r$,
then there exits $l\in \omega$ such that $(a'(i), a'(j))\in \r^l$. Here we implicitly require that for distinct $i,j,k<n$, if 
$(a(i),a(j))\in \r$, $(a(j), a(k))\in \r'$, $(a(i), a(k))\in \r''$, and 
$(a'(i), a'(j))\in \r^l_1$, $(a'(j), a'(k))\in [\r']^{l_2}$ and $(a'(i), a'(k))\in [\r'']^{l_3}$, then $l_1=l_2=l_3=l$, say,
so that $(\r^l, [\r']^l, [\r'']^l)$ 
is a consistent triangle in $\Gamma'$.

Then 
every $a:n\to \Gamma$, where $\Gamma$ is red in ${\sf PEA}_{n+1, n}$ is mapped to the join of $\phi^W$, 
where $\phi$ is an $\sf MCA$ formula, corresponding 
to $a':n\to \Gamma'$ in $\C$, such that $a'$ is a 
copy of $\Gamma$. 

These joins exist because $\C$ is complete.

Hence we have a map $\Psi: \At(\sf PEA_{n+1, n})\to \C$.  
This induces another map,  which we denote also by $\Psi$ from the finite algebra ${\sf PEA}_{n+1, n}$ 
to $\mathfrak{C}$ taking finite (unions) sums of atoms to the 
corresponding sum in $\C$.


We now show that $\Psi$ is an injective homomorphism. It is injective since distinct atoms are mapped to distinct elements.
We proceed to show that it preserves the operations; in the process we clarify 
the fact that we can look at $\At$ 
as begotten from ${\sf PEA}_{n+1, n}$ by {\it splitting every red graph into $\omega$ many copies}, in the following
sense.

If $a':n\to \Gamma'$ and $\Gamma'$ is a red graph 
using the colours of the rainbow signature of $\At$, whose reds are $\{\r_{kj}^l: k<j<n, l\in \omega\},$
then there is a unique $a: n\to \Gamma$, $\Gamma$ 
a red graph using the red colours in the rainbow signature of $\sf PEA_{n+1,n}$, namely, $\{\r_{kj}: k<j< n\}$
such that $a'$ is a copy of $a$.
We denote $a$ by $o(a')$, $o$ short for {\it original}; $a$ is the original of its copy $a'$.

For $i<n$, let $T_i$ be the accessibility relation corresponding to the $i$th cylindrifier in $\At$. 
Let  
$T_i^{s}$, be that corresponding to the $i$th cylindrifier in ${\sf PEA}_{n+1, n}$. 
Then if $c:n\to \Gamma$ and $d: n\to \Gamma'$ are surjective maps 
$\Gamma, \Gamma'$ are coloured graphs for ${\sf PEA}_{n+1, n}$, that are not red, then for any $i<n$, we have 
$$([c],[d])\in T_i\Longleftrightarrow ([c],[d])\in T_i^s.$$

If $\Gamma$ is red using the colours for the rainbow signature of $\At$ (without $\rho$) 
and $a':n\to \Gamma$,then for any $b:n\to \Gamma'$ where $\Gamma'$ is not red and any $i<n$, we have   
$$([a'], [b])\in T_i\Longleftrightarrow  ([o(a')], [b])\in T_i^{s}.$$
Extending the notation, for $a:n\to \Gamma$ a graph that is not red in $\At$, set $o(a)=a$.
Then for any $a:n\to \Gamma$, $b:n\to \Gamma'$, where $\Gamma, \Gamma'$ are 
coloured graphs at least  one of which is not red in $\At$ and any $i<n$, we have
$$[a]T_i[b]\Longleftrightarrow [o(a)]T_i^s[o(b)].$$

Now we deal with the last case, when the two graphs involved are red. 
Now assume that $a':n\to \Gamma$ is as above, that is $\Gamma\in {\sf CGR}$ is red, 
$b:n\to \Gamma'$ and $\Gamma'$ is red too, using the colours in the rainbow signature of $\At$.

Say that two maps $a:n\to \Gamma$, $b:n\to \Gamma'$, with $\Gamma$ and $\Gamma'\in \sf CGR$ having the same size 
are  $\r$ related if all non red edges and $n-1$ tuples have the same colours (whenever defined), and 
for all every red $\r$, whenever  $i<j<n$, $l\in \omega$,  and $(a(i), a(j))\in \r^l$, then there exists
$k\in \omega$ such that $(b(i), b(j))\in \r^k$. In more detail, for any non red binary colour $\sf c$ and any red $\r$ we have  for any $i,j, k_0,\ldots k_{n-1}<n$: 
\begin{itemize}
\item $a(i)=a(j)\Longleftrightarrow a'(i)=a'(j),$
\item $(a(i), a(j))\in {\sf c} \Longleftrightarrow (a'(i), a'(j))\in {\sf c},$ 
\item $(a(i), a(j))\in \r^k \Longleftrightarrow (a'(i), a'(j))\in \r^l$ for some $l,k\in \omega,$
\item $M_a(a(k_0),\dots, a(k_{n-2}))=M_{a'}(a'(k_0),\ldots, a'(k_{n-2}))$ whenever defined.
\end{itemize}

Let $i<n$. Assume that $([o(a')], [o(b)])\in T_i^s$. Then there exists $c:n\to \Gamma$ that is $\r$ related to $a'$
such that $[c]T_i[b]$.  Conversely, if $[c]T_i[b]$, then $[o(c)]T_i[o(b)].$

Hence, by complete additivity of cylindrifiers,  the map $\Theta: \At({\sf PEA}_{n+1, n})\to \Cm\At$ defined via
\[
 \Theta(\{[a]\})=
  \begin{cases}
    \{ [a']: \text { $a'$   copy of $a$}\}  \text { if $a$ is red, } \\
        \{[a]\} \text { otherwise. } \\
   
  \end{cases}
\]
\\
induces an embedding from ${\sf PEA}_{n+1, n}$ to $\Cm\At$, which we denote also by 
$\Theta$.

We first check preservation of diagonal elements. 
If $a'$ is a copy of $a$, $i, j<n$, and  $a(i)=a(j)$, then $a'(i)=a'(j)$.

We next  check cylindrifiers. We show that for all $i<n$ and $[a]\in \At(\PEA_{n+1, n})$ we have: 
$$\Theta({\sf c}_i[a])=\bigcup \{\Theta([b]):[b]\in \At\PEA_{n+1,n}, [b]\leq {\sf c}_i[a]\}= {\sf c}_i\Theta ([a]).$$
Let $i<n$. If $[b]\in \At\PEA_{n+1}$,  $[b]\leq {\sf c}_i[a]$, and $b':n\to \Gamma$, $\Gamma\in \sf CGR$, is a copy of $b$, 
then there exists  $a':n\to \Gamma'$, $\Gamma'\in \sf CGR$, a copy of  $a$ such that
$b'\upharpoonright  n\setminus \{i\}=a'\upharpoonright n\setminus \{i\}$. Thus $\Theta([b])\leq {\sf c}_i\Theta([a])$.

Conversely, if $d:n\to \Gamma$, $\Gamma\in \sf CGR$ and $[d]\in {\sf c}_i\Theta([a])$, then there exist $a'$ a copy of $a$ such that 
$d\upharpoonright n\setminus \{i\}=a'\upharpoonright n\setminus \{i\}$. 
Hence $o(d)\upharpoonright n\setminus  \{i\}=a\upharpoonright n\setminus \{i\}$, and so $[d]\in \Theta({\sf c}_i[a]),$
and we are done.

Now we also have
$\Cm\At\cong \C$, via the map 
$g:X\mapsto \bigcup X$. The proof is similar to Hodkinson's \cite[Hodkinson] for his \d\ completion which is different than 
ours.
The map is clearly injective. It is surjective, since
$$\phi^W=\bigcup\{\alpha^W: \alpha \text { an $\sf MCA$-formula}, \alpha^W\subseteq \phi^W\}$$
for any $L_{\infty\omega}^n$ formula $\phi$.
Preservation of the Boolean operations and diagonals is clear.
We check cylindrifications. We require that for any $X\subseteq \At\A$,
we have
$\bigcup {\sf c}_i^{\C}X={\sf c}_i^{\cal D}(\bigcup X)$ that is
$$\bigcup \{S\in \At\A: S\subseteq {\sf c}_i^{\A}S'\text { for some $S'\in X$ }\}=$$
$$\{\bar{a}\in W: \bar{a}\equiv_i \bar{a'} \text { for some } \bar{a'}\in \bigcup X\}.$$
Let $\bar{a}\in S\subseteq {\sf c}_iS'$, where $S'\in X$. So there is $\bar{a'}\equiv_i \bar{a}$ with $\bar{a'}\in S'$, and so $\bar{a'}\in \bigcup X$.

Conversely, let $\bar{a}\in W$ with $\bar{a}\equiv_i \bar{a'}$ for some $\bar{a'}\in \bigcup X$.
Let $S\in \At\A$, $S'\in X$ with $\bar{a}\in S$ and $\bar{a'}\in S'$.
Choose $\sf MCA$ formulas $\alpha$ and $\alpha'$ with $S=\alpha^W$ and $S'=\alpha'^{W}$,
then $\bar{a}\in \alpha^{W}\cap (\exists x_i\alpha')^W$ so $\alpha^W\subseteq (\exists x_i\alpha')^W$, or $S\subseteq c_i^{\A}(S')$.
The required now follows. We leave the checking of substitutions to the reader.
Thus $$g\circ \Theta=\Psi,$$
and so $\Psi: \sf PEA_{n+1,n}\to \C$ is an embedding. 

A subset of $\C$ is red if it consists only of red graphs. 
Notice that for every red atom $a$, we have $|\Theta(a)|\geq \omega$, and it is {\it not} co-finite, that is its complement is also 
infinite.  These sets do not exist in the term algebra, which contains only finite or co-finite 
red sets. 
Here $\rho$ functions in a sense as a non standard red colour, corresponding to the non-principal ultrafilter of 
$\A$ generated by all co-finite sets of red atoms.

\item {\sf \pa\ winning  $F^{n+3}$ on $\At(\PEA_{n+1,n})$}

But now we can show that \pa\ can win the game $F^{n+3}$ on $\At({\sf PEA}_{n+1,n})$ in only $n+2$ rounds 
as follows. 
Viewed as an \ef\ forth game  pebble game, with finitely many rounds and pairs of pebbles, 
played on the two complete irreflexive graphs $n+1$ and $n$, in each round $0,1\ldots n$, \pa\ places a  new pebble  on  an element of $n+1$.
The edge relation in $n$ is irreflexive so to avoid losing
\pe\ must respond by placing the other  pebble of the pair on an unused element of $n$.
After $n$ rounds there will be no such element,
and she loses in the next round.
Hence \pa\ can win the graph game on $\At({\sf PEA}_{n+1,n})$ in $n+2$ rounds using  $n+3$ nodes.

In the game $F^{n+3}$ \pa\ forces a win on a red clique using his excess of greens by bombarding \pe\
with $\alpha$ cones having the same base ($1\leq \alpha\leq n+2)$.

In his zeroth move, \pa\ plays a graph $\Gamma$ with
nodes $0, 1,\ldots, n-1$ and such that $\Gamma(i, j) = \w_0 (i < j <
n-1), \Gamma(i, n-1) = \g_i ( i = 1,\ldots, n-2), \Gamma(0, n-1) =
\g^0_0$, and $ \Gamma(0, 1,\ldots, n-2) = \y_{n+2}$. This is a $0$-cone
with base $\{0,\ldots , n-2\}$. In the following moves, \pa\
repeatedly chooses the face $(0, 1,\ldots, n-2)$ and demands a node
$\alpha$ with $\Phi(i,\alpha) = \g_i$, $(i=1,\ldots n-2)$ and $\Phi(0, \alpha) = \g^\alpha_0$,
in the graph notation -- i.e., an $\alpha$-cone, without loss $n-1<\alpha\leq  n+1$,  on the same base.
\pe\ among other things, has to colour all the edges
connecting new nodes $\alpha, \beta$ created by \pa\ as apexes of cones based on the face $(0,1,\ldots, n-2)$, that is $\alpha,
\beta\geq n-2$. 
By the rules of the game
the only permissible colours would be red. Using this, \pa\ can force a
win in $n+2$ rounds, using $n+3$ nodes  without needing to re-use them, 
thus forcing \pe\ to deliver an inconsistent triple 
of reds.

Let $\B={\sf PEA}_{n+1, n}$. 
Then $\Rd_{sc}\B$   is
outside $S\Nr_n\Sc_{n+3}$ for if it was in $S\Nr_n\Sc_{n+3}$, then being finite it would be in $S_c\Nr_n\Sc_{n+3}$
because $\Rd_{sc}\B$ is the same as its canonical extension $\D$, say, and $\D\in S_c\Nr_n\Sc_{n+3}$. 
But then by theorem \ref{thm:n}, \pe\ would have won. 

Hence $\Rd_{sc}\mathfrak{Cm}\At\notin S\Nr_n\Sc_{n+3}$,  
because $\Rd_{sc}\B$ is embeddable in it
and $S\Nr_n\Sc_{n+3}$ is a variety; in particular, it is closed
under forming subalgebras. 
It now readily follows that $\Rd_{sc}\Cm\At\notin S\Nr_n{\sf Sc}_{n+3}$.

Finally $\Rd_{df}\A$ is not completely representable, because if it were then $\A$,
generated by elements whose dimension sets $<n$,  as a $\PEA_n$ would be completely representable by theorem \ref{AU}
and this induces a representation of its \d\ completion $\Cm\At\A$.

\end{enumarab}

\end{proof}

\section{Complete representability}

Now we approach the notion of complete representations for any $\K$ between $\Sc$ and $\PEA$.
Rainbows will offer solace 
here as well. Throughout this subsection $n$ will be finite and $>1$.
We identify notationally set algebras with their universes.

Our next couple of theorems are formulated and proved for $\CA$s but the proof works for any $\K$ as specified above.

Let $\A\in \sf CA_{n}$ and $f:\A\to \wp(V)$ be a representation of $\A$, where $V$ is 
a generalized space of dimension $n$.
If $s\in {}V$ we let
$$f^{-1}(s)=\{a\in \A: s\in f(a)\}.$$
An {\it atomic representation} $f:\A\to \wp(V)$ is a representation such that for each 
$s\in V$, the ultrafilter $f^{-1}(s)$ is principal.

\begin{theorem}\label{complete} Let $\A\in \sf CA_n$. 
Let $f:\A\to \wp(V)$ be a representation of $\A$. Then $f$ is 
a  complete representation iff $f$ is an atomic one. 
Furthermore, if $\A$ is completely representable, then $\A$
is atomic and $\A\in S_c\Nr_n\sf CA_{\omega}$.
\end{theorem}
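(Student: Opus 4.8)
The plan is to prove the biconditional first, and then read off the ``furthermore'' from it by dilating the representation to dimension $\omega$.

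Throughout write $F_s=f^{-1}(s)=\{a\in\A:s\in f(a)\}$ for $s\in V$; since $f$ is a Boolean homomorphism with $f(0)=\emptyset$ and $f(1)=V$, each $F_s$ is a Boolean ultrafilter of $\A$. For \emph{complete $\Rightarrow$ atomic} I would show every $F_s$ is principal. The one delicate point is that the infimum $\prod F_s$ computed in $\A$ \emph{exists}, even though $\A$ need not be complete: any nonzero lower bound $c$ of $F_s$ must lie in $F_s$ (otherwise $-c\in F_s$, so $c\le -c$ and $c=0$), and a lower bound of $F_s$ that lies in $F_s$ is its least element, hence its greatest lower bound; so $F_s$ either has a nonzero lower bound, which is then $\prod F_s$, or $0$ is its only lower bound and $\prod F_s=0$. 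Completeness of $f$ then gives $f(\prod F_s)=\bigcap_{b\in F_s}f(b)\ni s$, so $\prod F_s\neq 0$; and if $0<d<\prod F_s$ then $d\in F_s$ or $-d\in F_s$, each contradicting that $\prod F_s$ is the least element of $F_s$, so $\prod F_s$ is an atom and (distinct ultrafilters being incomparable) $F_s$ is the principal ultrafilter it generates. For \emph{atomic $\Rightarrow$ complete}, write $\nu(s)$ for the atom generating $F_s$; then whenever $\prod X$ is defined and $s\in\bigcap f[X]$ we get $\nu(s)\le x$ for every $x\in X$, hence $\nu(s)\le\prod X$, hence $\prod X\in F_s$, i.e.\ $s\in f(\prod X)$; the reverse inclusion is monotonicity, so $f(\prod X)=\bigcap f[X]$. (By De Morgan, completeness of $f$ is equivalent to $f(\sum X)=\bigcup f[X]$ whenever $\sum X$ exists, the form used below.)

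For the ``furthermore'', assume $\A$ is completely representable, say by $f:\A\to\wp(V)$ with $V=\bigcup_{i\in I}{}^nU_i$ a generalized space (note $n\ge 2$). Then $\A$ is atomic: for $0\ne a\in\A$ pick $s\in f(a)$, so $F_s$ is principal and its generating atom lies below $a$. For the neat embedding I would put $W=\bigcup_{i\in I}{}^{\omega}U_i$, a generalized space of dimension $\omega$, so that $\wp(W)\in\CA_{\omega}$, and let $r:W\to V$ be the restriction map $s\mapsto s\upharpoonright n$, which is well defined and surjective since the $U_i$ are nonempty and pairwise disjoint. I claim $\bar f:=r^{-1}\circ f:\A\to\wp(W)$ is a neat embedding. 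Indeed, $r^{-1}:\wp(V)\to\wp(W)$ is an injective $\CA_n$-homomorphism (a routine verification using disjointness of the $U_i$ and $n\ge 2$), so $\bar f$ is an injective $\CA_n$-homomorphism; and since $r$ ignores the coordinates $\ge n$, each $\bar f(a)=r^{-1}(f(a))$ is a fixed point of $\cyl j^W$ for all $j\ge n$, so $\bar f[\A]\subseteq\Nr_n\wp(W)$. It remains to check completeness of $\bar f$, i.e.\ that $\sum^{\A}X=1$ implies $\sum^{\Nr_n\wp(W)}\bar f[X]=1$. But if $\sum^{\A}X=1$ then, by completeness of $f$, $\bigcup\bar f[X]=r^{-1}\!\bigl(\bigcup f[X]\bigr)=r^{-1}(f(1))=r^{-1}(V)=W$, and $W$ is the unit of $\Nr_n\wp(W)$, so this set-theoretic union is already the supremum. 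Hence $\A\subseteq_c\Nr_n\wp(W)$ with $\wp(W)\in\CA_{\omega}$, i.e.\ $\A\in S_c\Nr_n\CA_{\omega}$.

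I expect the only real subtlety to be that $\Nr_n\wp(W)$ is in general \emph{much larger} than $\wp(V)$ --- a cylinder condition $\cyl j X=X$ for $j\ge n$ does not force a subset of ${}^{\omega}U$ to depend only on its first $n$ coordinates --- so one cannot simply identify $\Nr_n\wp(W)$ with $\wp(V)$; the point is that the suprema relevant to the $S_c$-condition are unit elements, i.e.\ honest unions, and those are respected by every set algebra. Together with the (equally essential but elementary) existence of $\prod F_s$ inside the possibly incomplete $\A$, this is all that is needed. Finally, the diagonals play no essential role and the substitution and transposition operators are transported through $r^{-1}$ and the dilation exactly as the cylindrifiers are, so the same argument establishes the theorem for every $\K$ between $\Sc$ and $\PEA$, as the text claims.
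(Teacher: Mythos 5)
Your proof is correct, and the neat-embedding half takes a genuinely different route from the paper's. The paper disposes of the equivalence ``complete $\Leftrightarrow$ atomic'' by citation, whereas you prove it from scratch; your key observation that $\prod f^{-1}(s)$ always \emph{exists} in the possibly incomplete $\A$ (because a nonzero lower bound of an ultrafilter must belong to it, hence be its minimum) is exactly the point that makes that argument go through. For $\A\in S_c\Nr_n\CA_\omega$ the paper works component by component: it fixes a base point $f_i\in{}^\omega U_i$, forms the \emph{weak} space $W_i$ of sequences agreeing cofinitely with $f_i$, and takes $\C=\prod_i\wp(W_i)$. The weak spaces are what make $\Nr_n\C_i$ atomic and essentially a copy of $\wp({}^nU_i)$ (any two points of $W_i$ differ in finitely many coordinates, so finitely many cylindrifications identify elements of $\Nr_n\C_i$ with sets determined by their first $n$ coordinates), and the complete-embedding check is then done atom by atom. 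You instead dilate to the \emph{full} generalized space $W=\bigcup_i{}^\omega U_i$ and embed via $r^{-1}\circ f$; as you rightly note, $\Nr_n\wp(W)$ is then far larger than $\wp(V)$ and cannot be described explicitly, but this is harmless because the paper's definition of $\subseteq_c$ only requires preservation of suprema equal to $1$, and those reduce to set-theoretic unions covering $W$, which every set algebra respects. Your version is more economical (one algebra, no base points, no atomicity analysis of the neat reduct) at the cost of relying on the unit-supremum formulation of $S_c$; the paper's version yields the stronger structural information that $\A$ embeds completely into an \emph{atomic} neat reduct, which it reuses elsewhere.
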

\begin{proof} Witnesss \cite[Theorems 5.3.4, 5.3.6]{Sayedneat}, \cite[Theorem 3.1.1]{HHbook2} for the first three parts.
It remains to  show that if $\A$ is completely representable, then $\A\in S_c\Nr_n\CA_{\omega}$.
Assume that $M$ is the base of
a complete representation of $\A$, whose
unit is a generalized space,
that is, $1^M=\bigcup_{i\in I} {}^nU_i$, where $U_i\cap U_j=\emptyset$ for distinct $i$ and $j$ in 
$I$ where $I$ is an 
index set $I$. Let $t:\A\to \wp(1^M)$ be the complete representation.
 For $i\in I$, let $E_i={}^nU_i$, pick $f_i\in {}^{\omega}U_i$, let $W_i=\{f\in  {}^{\omega}U_i: |\{k\in \omega: f(k)\neq f_i(k)\}|<\omega\}$,
and let ${\C}_i$ be the $\CA_n$ with universe $\wp(W_i)$, with the $\CA$ operations defined the usual way
on weak set algebras.  
Then $\C_i$ is atomic; indeed the atoms are the singletons.

Let $x\in \Nr_n\C_i$, that is ${\sf c}_jx=x$ for all $n\leq j<\omega$.
Now if  $f\in x$ and $g\in W_i$ satisfy $g(k)=f(k) $ for all $k<n$, then $g\in x$.
Hence $\Nr_n \C_i$
is atomic;  its atoms are $\{g\in W_i:  \{g(0),\ldots g(n-1)\}\subseteq U_i\}.$
Define $h_i: \A\to \Nr_n\C_i$ by
$$h_i(a)=\{f\in W_i: \exists a\in \At\A: (f(0)\ldots f(n-1))\in t(a)\}.$$

Let $\C=\prod _i \C_i$. Let $\pi_i:\C\to \C_i$ be the $i$th projection map.
Now clearly  $\C$ is atomic, because it is a product of atomic algebras,
and its atoms are $(\pi_i(\beta): \beta\in \At(\C_i)\}$.
Now  $\A$ embeds into $\Nr_n\C$ via $I:a\mapsto (\pi_i(a) :i\in I)$. 
and we may assume that the map is surjective. 

If $a\in \Nr_n\C$,
then for each $i$, we have $\pi_i(x)\in \Nr_n\C_i$, and if $x$ is non
zero, then $\pi_i(x)\neq 0$. By atomicity of $\C_i$, there is a tuple $\bar{m}$ such that
$\{g\in W_i: g(k)=[\bar{m}]_k\}\subseteq \pi_i(x)$. Hence there is an atom
$a$ of $\A$, such that $\bar{m}\in t(a)$,  so $x\cdot  I(a)\neq 0$, and so the embedding is complete
and we are done. Note that in this argument {\it no cardinality condition} is required.
(The reverse inclusion does not hold in general for uncountable algebras, as will be shown in theorem \ref{complete},
though it holds for atomic algebras with countably many atoms as shown in our next theorem).
Hence $\A\in S_c\Nr_n\CA_{\omega}$.

\end{proof}
Conversely, we have \cite[Theorem 5.3.6]{Sayedneat}:
\begin{theorem}\label{completerepresentation} If $\A$ is countable and atomic 
and  $\A\in S_c\Nr_n\CA_{\omega},$ then $\A$ is completely representable.
\end{theorem}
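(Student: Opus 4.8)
The plan is to extract a complete representation of $\A$ from a \ws\ for \pe\ in the $\omega$-rounded atomic game, using countability to force \pa\ to play ``fairly''. First I would invoke the first part of theorem~\ref{thm:n} with $\K=\CA$ and $m=\omega$: since $\A$ is an atomic $\CA_n$ and $\A\in S_c\Nr_n\CA_{\omega}$, \pe\ has a \ws\ $\sigma$ in $F^{\omega}(\At\A)$. Recall that in this game the nodes are drawn from $\omega$, \pa\ may reuse them, and his moves are the opening move (choosing an atom) and cylindrifier moves $(N,\langle f_0,\ldots,f_{n-2}\rangle,k,b,l)$. I would then have \pa\ open with an atom $a$ and thereafter follow a bookkeeping schedule that, over the $\omega$ rounds, issues every cylindrifier demand that ever becomes legitimate for the network built so far; since $\A$, hence $\At\A$, and the node set $\omega$ are both countable, there are only countably many such demands and a routine dovetailing realizes all of them. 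This is the one place where the countability hypothesis is used.

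Against this fair play \pe\ answers by $\sigma$ and never loses, so the play produces an increasing chain $N_0\subseteq N_1\subseteq\cdots$ of atomic networks whose union $N^{a}=\bigcup_i N_i$ is an atomic network over the countable base $U_a=\nodes(N^{a})$, which realizes the atom $a$ and is \emph{saturated}: whenever $\bar s\in{}^nU_a$, $l<n$ and $b\in\At\A$ with $b\le\cyl l N^{a}(\bar s)$, there is a node $k\in U_a$ and a tuple $\bar s'$ with $\bar s'\equiv_l\bar s$, $\bar s'(l)=k$ and $N^{a}(\bar s')=b$. Carrying this out once for each atom $a\in\At\A$ and forming the disjoint union $V=\bigcup_{a\in\At\A}{}^nU_a$, a generalized space of dimension $n$, the $N^{a}$ glue to a single atomic network $N$ on $V$ for which every atom of $\A$ is realized and every tuple of $V$ carries an atom as its label. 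Now I would set $h:\A\to\wp(V)$, $h(x)=\{\bar s\in V: N(\bar s)\le x\}=\bigcup\{h(b): b\in\At\A,\ b\le x\}$. The network axioms yield preservation of the diagonals and the inclusion $h(\cyl l x)\subseteq\cyl l h(x)$ comes from saturation, while the reverse inclusion and the Boolean operations are immediate because every label is an atom; if one works with $\PEA_n$ rather than $\CA_n$, the substitution clause of an atomic network handles the polyadic operations. So $h$ is a homomorphism, it is injective because $\A$ is atomic and every atom is realized, and since $N(\bar s)$ is always an atom we get $\bigcup_{b\in\At\A}h(b)=V$; hence $h$ is an atomic representation, so a complete one by theorem~\ref{complete}, and $\A$ is completely representable.

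The main obstacle I expect is the interplay of the bookkeeping with the verification that the saturated limit network supplies exactly the missing inclusion $h(\cyl l x)\subseteq\cyl l h(x)$ --- the rest being routine manipulation of the network axioms --- together with making sure the schedule genuinely exhausts all demands in $\omega$ rounds; this is precisely the step that breaks for uncountable $\A$, consistently with the remark around theorem~\ref{complete} that the converse of ``$\A\in S_c\Nr_n\CA_{\omega}$ implies complete representability'' can fail for uncountable algebras. An alternative route to the same conclusion is the Baire category argument of \cite{Sayedneat,Sayed}: pick $\C\in\CA_{\omega}$ with $\A\subseteq_c\Nr_n\C$ and show that the set of ultrafilters of $\C$ that either refuse to witness some cylindrifier, fail to be complete on $\A$, or miss some atom of $\A$ is meagre in the Stone space of $\C$; a generic ultrafilter then induces the required complete representation of $\A$.
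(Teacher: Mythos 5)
Your proof is correct, but it follows a different route from the one the paper relies on. The paper offers no proof of this statement at all: it simply cites \cite[Theorem 5.3.6]{Sayedneat}, and elsewhere describes that argument as ``an omitting types argument implemented via the Baire category theorem'' --- one fixes $\C\in\CA_{\omega}$ with $\A\subseteq_c\Nr_n\C$ and finds, in the Stone space of $\C$, a comeagre set of ultrafilters that witness all cylindrifiers and meet every atom of $\A$; this is exactly the alternative you sketch in your last sentence. What you actually carry out is the game-theoretic proof: theorem~\ref{thm:n} gives \pe\ a \ws\ in $F^{\omega}(\At\A)$, hence in the usual atomic game, and then the standard scheduling/dovetailing construction of a saturated atomic network (essentially \cite[Theorem 3.3.3]{HHbook2}) produces an atomic, hence complete, representation. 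The two arguments are close cousins --- countability enters at the analogous point in each (exhausting countably many cylindrifier demands versus intersecting countably many dense open sets) --- but they buy slightly different things. Your route is self-contained given the machinery the paper has already set up, and it is in fact how the paper itself argues in theorem~\ref{rainbow} and in the ``strongly representable'' part of theorem~\ref{complete}. The Baire category route of \cite{Sayedneat} generalizes more readily, e.g.\ to omitting countably many non-principal types at once and to atomic algebras with only countably many atoms that need not themselves be countable, a relaxation the paper explicitly mentions. One small point to tidy in your write-up: the network axioms as stated in the paper only give $N(\delta^i_j)\leq\diag ij$, i.e.\ one direction of the diagonal condition, so to get $h(\diag ij)=\{\bar s\in V: s_i=s_j\}$ you should either build the converse into the networks (as Hirsch--Hodkinson do) or quotient each base $U_a$ by the induced equivalence before forming $V$; this is routine but should be said.
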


Now we use a rainbow construction. Coloured graphs and rainbow algebras are defined like
above. The algebra constructed now is very similar to ${\sf PEA}_{\omega,\omega}$ but is not identical; for in coloured graphs
we add a new triple of forbidden colours involving two greens and one red synchronized by an order preserving 
function. In particular, we consider the underlying set of $\omega$ endowed 
with two orders, the usual order and its converse.

\begin{theorem}\label{rainbow}
Let $3\leq n<\omega$. Then there exists an atomic $\C\in \sf PEA_n$ with countably many atoms
such that $\Rd_{Sc}\C\notin S_c\Nr_n\Sc_{n+3}$, and there exists
a countable $\B\in S_c\Nr_n\sf QEA_{\omega}$ such that $\C\equiv \B$ (hence $\B$ is also atomic). In particular,
for any class $\sf L$, such that $S_c\Nr_n\K_{\omega}\subseteq \sf L\subseteq S_c\Nr_n\K_{n+3}$, $\sf L$ is not elementary, and
the class of completely representable $\K$ algebras of dimension $n$
is not elementary.
\end{theorem}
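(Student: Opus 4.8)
The plan is to run the Hirsch--Hodkinson scheme of \cite{HH} for ``completely representable algebras are not elementary'', now in the $\PEA$ setting and sharpened so the witness also escapes $S_c\Nr_n\Sc_{n+3}$, by playing two games against a suitably doctored rainbow atom structure. First I would build, exactly as in Definition \ref{def} and the proof of Theorem \ref{can}, the class of coloured graphs with greens $\g_i$ $(1\le i<n-1)$ and $\g_0^i$ $(i\in\omega)$, whites $\w_i$, reds $\r_{kl}$ $(k,l\in\omega)$ and shades of yellow $\y_S$, but I add one extra family of forbidden triples: a triple $(\g_0^i,\g_0^j,\r_{kl})$ of two green cone-edges and a red edge is declared inconsistent unless $i\ne j$, $k\ne l$ and $i<j\Leftrightarrow k>l$, that is, unless the map sending green tints to red indices is order preserving for the usual order on the green side and the converse order on the red side. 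Let $\At$ be the resulting rainbow $\PEA_n$ atom structure (built from the coloured graphs precisely as in Definition \ref{def}) and put $\C=\Cm\At$. Since the colour signature and the two index orders are countable, there are only countably many isomorphism types of coloured graph on at most $n$ nodes, so $\At$ is countable; hence $\C$ is an atomic $\PEA_n$ with only countably many atoms (it is complete and of size continuum, which is irrelevant).

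I would then analyse the games. For the lower bound the point is that \pa\ has a \ws\ in $F^{n+3}(\At)$, this time using all $\omega$ of the rounds (unlike the finite-round win of Theorem \ref{can}). As there, he keeps a face of $n-1$ nodes fixed and, re-using his remaining nodes, repeatedly demands cones on it with strictly increasing green tints, at each stage keeping alive the red edge between the two most recent apexes; a consistent red clique on cone apexes is governed by an assignment $\mu$ of a natural number to each apex with edge colour between $a,b$ equal to $\r_{\mu(a)\mu(b)}$, and the new order-synchronised triples force $\mu$ to be order reversing in the green tints, so \pe\ is driven to produce an infinite strictly decreasing sequence of naturals, which is impossible. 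By Theorem \ref{thm:n} (with $\Sc$ for $\K$, $m=n+3$) this gives $\Rd_{Sc}\C\notin S_c\Nr_n\Sc_{n+3}$, and in particular $\C$ is not completely representable, for by Theorem \ref{complete} a complete representation of $\C$ would put it in $S_c\Nr_n\CA_{\omega}$, whence $\Rd_{Sc}\C\in S_c\Nr_n\Sc_{n+3}$. Conversely, in the atomic game restricted to any \emph{finite} number $k$ of rounds only a finite clique of cones ever appears, a finite decreasing run of red indices is harmless, and the usual rainbow bookkeeping (white on every non-forced edge, the correct shade of yellow on every uncoloured $(n-1)$-tuple, as in the $n$-homogeneity argument of Theorem \ref{can}) lets \pe\ win; coding her $k$-round \ws\ as the $k$th Lyndon condition $\rho_k$, a first order sentence, we get $\C\models\rho_k$ for every $k<\omega$.

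To produce $\B$, take a countable recursively saturated elementary substructure $\B\prec\C$ (which exists by standard model theory). Then $\B\equiv\C$, $\B$ is countable and atomic, and $\B\models\rho_k$ for all $k$; the standard argument \cite{HH}, \cite[Theorem 3.1.1]{HHbook2} that on a recursively saturated atomic algebra all the Lyndon conditions together yield a \ws\ for \pe\ in the $\omega$-round atomic game then gives that \pe\ wins $F^{\omega}(\At\B)$. Hence $\B$, being countable and atomic, is completely representable, and the neat-embedding half of Theorem \ref{complete} (whose proof uses no cardinality hypothesis) gives $\B\in S_c\Nr_n\QEA_{\omega}$. Finally, for any $\K$ between $\Sc$ and $\PEA$, passing to $\K$-reducts we get $\Rd_{\K}\B\in S_c\Nr_n\K_{\omega}$, while $\Rd_{\K}\C\notin S_c\Nr_n\K_{n+3}$ (else its $\Sc$-reduct $\Rd_{Sc}\C$ would lie in $S_c\Nr_n\Sc_{n+3}$), and $\Rd_{\K}\C\equiv\Rd_{\K}\B$; so no class $\sf L$ with $S_c\Nr_n\K_{\omega}\subseteq\sf L\subseteq S_c\Nr_n\K_{n+3}$ is closed under elementary equivalence, and the same pair $\Rd_\K\B,\Rd_\K\C$ shows that the class of completely representable $\K_n$'s is not elementary.

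The main obstacle, and what needs the most care, is the two-sided game analysis against the fixed pool of $n+3$ re-usable nodes: one must verify that \pa\ can, in spite of re-using nodes in $F^{n+3}$, carry the current value of $\mu$ on his two live apexes from round to round --- this is exactly where the usual-order/converse-order synchronisation is indispensable --- and thereby force an infinite strictly descending chain of red indices, that the white and yellow moves genuinely open to \pe\ never let her sidestep the forced reds, and that in every \emph{finite}-length game those same devices still leave her unbounded room in $\omega$, so that all the Lyndon conditions do hold. A minor additional point is checking that the recursively saturated $\B$ is at once countable and saturated enough for the Lyndon conditions to upgrade to a genuine $\omega$-round winning strategy --- routine for a recursive atom structure over a countable signature, but worth stating explicitly.
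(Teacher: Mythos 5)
Your overall architecture is the paper's: the same order-synchronised rainbow (greens indexed by one copy of $\omega$, reds by the other, with triples $(\g_0^i,\g_0^j,\r_{kl})$ constrained so that the tint-to-index assignment is monotone the right way round), \pa\ winning the $\omega$-rounded game $F^{n+3}$ by forcing an infinite descending chain of red indices, \pe\ winning every finite atomic game, theorem \ref{thm:n} for the negative half, and a countable elementarily equivalent completely representable $\B$ for the positive half. The one genuinely different step is how you produce $\B$: you take a countable recursively saturated model of the theory of $\C$ and amalgamate the finite-round strategies into an $\omega$-round one by saturation, where the paper takes a non-principal ultrapower and runs an elementary chain argument. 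Both are standard and both work; note only that a recursively saturated elementary \emph{substructure} of $\C$ need not exist --- what you want (and all you need) is a countable recursively saturated model elementarily equivalent to $\C$.

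Two points need repair. First, your consistency condition ``inconsistent unless $i\ne j$, $k\ne l$ and $i<j\Leftrightarrow k>l$'' forbids \emph{every} triple $(\g_0^i,\g_0^i,\r_{kl})$. But \pa\ can demand two cones of the \emph{same} tint $i$ on the same base with distinct apexes; the edge between the two apexes cannot be green, cannot be $\w_0$ (that triple is forbidden with two $\g_0$ labels), and under your condition cannot be red either, so \pe\ loses after three rounds, the Lyndon conditions fail, and the positive half of the theorem collapses. The condition must be the paper's: $(\g_0^i,\g_0^j,\r_{kl})$ is consistent iff $\{(i,k),(j,l)\}$ is an order-preserving partial \emph{function}, which in particular admits $(\g_0^i,\g_0^i,\r_{kk})$. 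Second, the assertion that ``a finite decreasing run of red indices is harmless'' is the entire content of the upper bound and is left unjustified. \pe\ must choose, round by round, red indices for \emph{all} pairs of cone apexes so that every constraint accumulated so far (order synchronisation with the tints, together with the matching condition on triples of reds) remains satisfiable for the remaining rounds, including when \pa\ later interleaves new tints between old ones. The paper achieves this by having \pe\ maintain an order-preserving partial map $\rho_s$ from tints to red indices whose range is widely spaced (gaps at least $3^{m-r}$ after $r$ of $m$ rounds), so that any tint introduced later can be inserted between existing values. Without this or an equivalent device the claim that \pe\ wins every $G_k$ is unsupported; you correctly flag this as the crux but do not supply the mechanism.
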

\begin{proof}  Let $\N^{-1}$ denote $\N$ with reverse order, let $f:\N\to \N^{-1}$ be the identity map, and denote $f(a)$ by
$-a$, so that for $n,m \in \N$, we have $n<m$ iff $-m<-n$.
We assume that $0$  belongs to $\N$ and we denote the domain of $\N^{-1}$ (which is $\N$)  by $\N^{-1}$.
We alter slightly the standard rainbow construction. The colours we use are the same colours used in rainbow constructions:
\begin{itemize}

\item greens: $\g_i$ ($1\leq i\leq n-2)$, $\g_0^i$, $i\in \N^{-1}$,

\item whites : $\w_i: i\leq n-2,$

\item reds:  $\r_{ij}$ $(i,j\in \N)$,

\item shades of yellow : $\y_S: S\subseteq_{\omega} \N^{-1}$ or $S=\N^{-1}$.

\end{itemize}

And the class $\G$ consists of all coloured graphs $M$ such that
\begin{enumarab}

\item $M$ is a complete graph.

\item $M$ contains no triangles (called forbidden triples)
of the following types:

\vspace{-.2in}
\begin{eqnarray}
&&\nonumber\\
(\g, \g^{'}, \g^{*}), (\g_i, \g_{i}, \w_i)
&&\mbox{any }1\leq i\leq  n-2  \\
(\g^j_0, \g^k_0, \w_0)&&\mbox{ any } j, k\in \N\\
\label{forb:pim}(\g^i_0, \g^j_0, \r_{kl})&&\mbox{unless } \set{(i, k), (j, l)}\mbox{ is an order-}\\
&&\mbox{ preserving partial function }\N^{-1}\to\N\nonumber\\
\label{forb:match}(\r_{ij}, \r_{j'k'}, \r_{i^*k^*})&&\mbox{unless }i=i^*,\; j=j'\mbox{ and }k'=k^*.
\end{eqnarray}
and no other triple of atoms is forbidden.

\item The last two items concerning shades of yellow are as before.

\end{enumarab}

But the forbidden triple $(\g^i_0, \g^j_0, \r_{kl})$
is not present in standard rainbow constructions, adopted example in \cite{HH} and in a more general form in
\cite{HHbook2}. Therefore, we canot use the usual rainbow argument adopted in \cite{HH}; we have to be selective for the choice of the indices
of reds if we are labelling the appexes of two cones having green tints; {\it not any red} will do.
Inspite of such a restriction (that makes it harder for \pe\ to win),  we will show that \pe\ will always succeed to choose a 
suitable red in the finite rounded atomic games. On the other hand this bonus for \pa\ will enable him to win the game $F^{n+3}$ by forcing 
\pe\ to play a decreasing sequence in $\N$. Using and re-using $n+3$ nodes will suffice for this 
purpose.

One then can define (what we continue to call) a rainbow atom structure
of dimension $n$ from the class $\G$.   
Let $$\At=\{a:n \to M, M\in \G: \text { $a$ is surjective }\}.$$
We write $M_a$ for the element of $\At$ for which
$a:n\to M$ is a surjection.
Let $a, b\in \At$ define the
following equivalence relation: $a \sim b$ if and only if
\begin{itemize}
\item $a(i)=a(j)\Longleftrightarrow b(i)=b(j),$

\item $M_a(a(i), a(j))=M_b(b(i), b(j))$ whenever defined,

\item $M_a(a(k_0),\dots, a(k_{n-2}))=M_b(b(k_0),\ldots, b(k_{n-2}))$ whenever
defined.
\end{itemize}
Let $\At$ be the set of equivalences classes. Then define
$$[a]\in E_{ij} \text { iff } a(i)=a(j).$$
$$[a]T_i[b] \text { iff }a\upharpoonright n\smallsetminus \{i\}=b\upharpoonright n\smallsetminus \{i\}.$$
Define accessibility relations corresponding to the polyadic (transpositions) operations as follows:
$$[a]S_{ij}[b] \text { iff } a\circ [i,j]=b.$$
This, as easily checked, defines a $\sf PEA_n$
atom structure. Let $\C$ be the complex algebra.
Let $k>0$ be given. We show that \pe\ has a \ws\ in the usual graph game in $k$ rounds
(now there is no restriction here on the size of the graphs) on $\At\C$.
We recall the `usual atomic' $k$ rounded game $G_k$ played on coloured graphs.
\pa\ picks a graph $M_0\in \G$ with $M_0\subseteq n$ and
$\exists$ makes no response
to this move. In a subsequent round, let the last graph built be $M_i$.
\pa\ picks
\begin{itemize}
\item a graph $\Phi\in \G$ with $|\Phi|=n,$
\item a single node $m\in \Phi,$
\item a coloured graph embedding $\theta:\Phi\smallsetminus \{m\}\to M_i.$
Let $F=\phi\smallsetminus \{m\}$. Then $F$ is called a face.
\pe\ must respond by amalgamating
$M_i$ and $\Phi$ with the embedding $\theta$. In other words she has to define a
graph $M_{i+1}\in C$ and embeddings $\lambda:M_i\to M_{i+1}$
$\mu:\phi \to M_{i+1}$, such that $\lambda\circ \theta=\mu\upharpoonright F.$
\end{itemize}

We define \pe\ s strategy for choosing labels for edges and $n-1$ tuples in response to \pa\ s moves.
Assume that we are at round $r+1$. Our  arguments are similar to the arguments in \cite[Lemmas, 41-43]{r}.

Let $M_0, M_1,\ldots M_r$, $r<k$ be the coloured graphs at the start of a play of $G_k(\alpha)$ just before round $r+1$.
Assume inductively that \pe\ computes a partial function $\rho_s:\N^{-1}\to \N$, for $s\leq r$, that will help her choose
the suffices of the chosen red in the critical case. In our previous rainbow construction we had the additional shade of red 
$\rho$ that did the job. Now we do not have it, so we proceed differently.  Inductively
for $s\leq r$ we assume:

\begin{enumarab}
\item  If $M_s(x,y)$ is green then $(x,y)$ belongs  \pa\ in $M_s$ (meaning he coloured it),

\item $\rho_0\subseteq \ldots \rho_r\subseteq\ldots,$
\item $\dom(\rho_s)=\{i\in \N^{-1}: \exists t\leq s, x, x_0, x_1,\ldots, x_{n-2}
\in \nodes(M_t)\\
\text { where the $x_i$'s form the base of a cone, $x$ is its appex and $i$ its tint }\}.$

The domain consists of the tints of cones created at an earlier stage,

\item $\rho_s$ is order preserving: if $i<j$ then $\rho_s(i)<\rho_s(j)$. The range
of $\rho_s$ is widely spaced: if $i<j\in \dom\rho_s$ then $\rho_s(i)-\rho_s(j)\geq  3^{m-r}$, where $m-r$
is the number of rounds remaining in the game,

\item For $u,v,x_0\in \nodes(M_s)$, if $M_s(u,v)=\r_{\mu,\delta}$, $M_s(x_0,u)=\g_0^i$, $M_s(x_0,v)=\g_0^j$,
where $i,j$ are tints of two cones, with base $F$ such that $x_0$ is the first element in $F$ under the induced linear order,
then $\rho_s(i)=\mu$ and $\rho_s(j)=\delta,$

\item $M_s$ is a a  coloured graph,

\item If the base of a cone $\Delta\subseteq M_s$  with tint $i$ is coloured $y_S$, then $i\in S$.

\end{enumarab}

To start with if \pa\ plays $a$ in the initial round then $\nodes(M_0)=\{0,1,\ldots, n-1\}$, the
hyperedge labelling is defined by $M_0(0,1,\ldots, n)=a$.

In response to a cylindrifier move for some $s\leq r$, involving a $p$ cone, $p\in \N^{-1}$,
\pe\ must extend $\rho_r$ to $\rho_{r+1}$ so that $p\in \dom(\rho_{r+1})$
and the gap between elements of its range is at least $3^{m-r-1}$. Properties (3) and (4) are easily
maintained in round $r+1$. Inductively, $\rho_r$ is order preserving and the gap between its elements is
at least $3^{m-r}$, so this can be maintained in a further round.
If \pa\ chooses a green colour, or green colour whose suffix
already belong to $\rho_r$, there would be fewer
elements to add to the domain of $\rho_{r+1}$, which makes it easy for \pe\ to define $\rho_{r+1}$.

Now assume that at round $r+1$, the current coloured graph is $M_r$ and that   \pa\ chose the graph $\Phi$, $|\Phi|=n$
with distinct nodes $F\cup \{\delta\}$, $\delta\notin M_r$, and  $F\subseteq M_r$ has size
$n-1$.  We can  view \pe\ s move as building a coloured graph $M^*$ extending $M_r$
whose nodes are those of $M_r$ together with the new node $\delta$ and whose edges are edges of $M_r$ together with edges
from $\delta$ to every node of $F$.

Now \pe\ must extend $M^*$ to a complete graph $M^+$ on the same nodes and
complete the colouring giving  a graph $M_{r+1}=M^+$ in $\G$ (the latter is the class of coloured graphs).
In particular, she has to define $M^+(\beta, \delta)$ for all nodes
$\beta\in M_r\sim F$, such that all of the above properties are maintained.

\begin{enumarab}

\item  If $\beta$ and $\delta$ are both apexes of two cones on $F$.

Assume that the tint of the cone determined by $\beta$ is $a\in \N^{-1}$, and the two cones
induce the same linear ordering on $F$. Recall that we have $\beta\notin F$, but it is in $M_r$, while $\delta$ is not in $M_r$,
and that $|F|=n-1$.
By the rules of the game  \pe\ has no choice but to pick a red colour. \pe\ uses her auxiliary
function $\rho_{r+1}$ to determine the suffices, she lets $\mu=\rho_{r+1}(p)$, $b=\rho_{r+1}(q)$
where $p$ and $q$ are the tints of the two cones based on $F$,
whose apexes are $\beta$ and $\delta$. Notice that $\mu, b\in \N$; then she sets $N_s(\beta, \delta)=\r_{\mu,b}$
maintaining property (5), and so $\delta\in \dom(\rho_{r+1})$
maintaining property (4). We check consistency to maintain property (6).

Consider a triangle of nodes $(\beta, y, \delta)$ in the graph $M_{r+1}=M^+$.
The only possible potential problem is that the edges $M^+(y,\beta)$ and $M^+(y,\delta)$ are coloured green with
distinct superscripts $p, q$ but this does not contradict
forbidden triangles of the form involving $(\g_0^p, \g_0^q, \r_{kl})$, because $\rho_{r+1}$ is constructed to be
order preserving.  Now assume that
$M_r(\beta, y)$ and $M_{r+1}(y, \delta)$ are both red (some $y\in \nodes(M_r)$).
Then \pe\ chose the red label $N_{r+1}(y,\delta)$, for $\delta$ is a new node.
We can assume that  $y$ is the apex of a $t$ cone with base $F$ in $M_r$. If not then $N_{r+1}(y, \delta)$ would be coloured
$\w$ by \pe\   and there will be no problem. All properties will be maintained.
Now $y, \beta\in M$, so by by property (5) we have $M_{r+1}(\beta,y)=\r_{\rho+1(p), \rho+1(t)}.$
But $\delta\notin M$, so by her strategy,
we have  $M_{r+1}(y,\delta)=\r_{\rho+1(t), \rho+1(q)}.$ But $M_{r+1}(\beta, \delta)=\r_{\rho+1(p), \rho+1(q)}$,
and we are done.  This is consistent triple, and so have shown that
forbidden triples of reds are avoided.

\item If there is no $f\in F$ such that $M^*(\beta, f), M^*(\delta,f)$ are coloured $g_0^t$, $g_0^u$ for some $t, u$ respectively,
then \pe\ defines $M^+(\beta, \delta)$ to be $\w_0$.

\item If this is not the case, and  for some $0<i<n-1$ there is no $f\in F$ such
that $M^*(\beta, f), M^* (\delta, f)$ are both coloured $\g_i$,
she chooses $\w_i$ for  $M^+{(\beta,\delta)}$.

It is clear that these choices in the last two items  avoid all forbidden triangles (involving greens and whites).

\end{enumarab}

She has not chosen green maintaining property (1).  Now we turn to colouring of $n-1$ tuples,
to make sure that $M^+$ is a coloured graph maintaining property (7).

Let $\Phi$ be the graph chosen by \pa\, it has set of node $F\cup \{\delta\}$.
For each tuple $\bar{a}=a_0,\ldots a_{n-2}\in {M^+}^{n-1}$, $\bar{a}\notin M^{n-1}\cup \Phi^{n-1}$,  with no edge
$(a_i, a_j)$ coloured green (we already have all edges coloured), then  \pe\ colours $\bar{a}$ by $\y_S$, where
$$S=\{i\in A: \text { there is an $i$ cone in $M^*$ with base $\bar{a}$}\}.$$
We need to check that such labeling works, namely that last property is maintained.

Recall that $M$ is the current coloured graph, $M^*=M\cup \{\delta\}$ is built by \pa\ s move
and $M^+$ is the complete labelled graph by \pe\, whose nodes are labelled by \pe\ in response to \pa\ s moves.
We need to show that $M^+$ is labelled according to
the rules of the game, namely, that it is in $\G$.
It can be checked $(n-1)$ tuples are labelled correctly, by yellow colours using
the same argument in \cite[p.16]{Hodkinson} \cite[p.844]{HH}
and \cite{HHbook2}.

We show that \pa\ has a \ws\ in $F^{n+3}$, the argument used is the $\CA$ analogue of \cite[Theorem 33, Lemma 41]{r}.
The difference is that in the relation algebra case, the game is played on atomic networks, but now it is translated to playing on coloured graphs,
\cite[lemma 30]{HH}.

In the initial round \pa\ plays a graph $\Gamma$ with nodes $0,1,\ldots n-1$ such that $\Gamma(i,j)=\w_0$ for $i<j<n-1$
and $\Gamma(i, n-1)=\g_i$
$(i=1, \ldots, n-2)$, $\Gamma(0,n-1)=\g_0^0$ and $\Gamma(0,1\ldots, n-2)=\y_{B}$.

In the following move \pa\ chooses the face $(0,\ldots n-2)$ and demands a node $n$
with $\Gamma_2(i,n)=\g_i$ $(i=1,\ldots, n-2)$, and $\Gamma_2(0,n)=\g_0^{-1}.$
\pe\ must choose a label for the edge $(n+1,n)$ of $\Gamma_2$. It must be a red atom $r_{mn}$. Since $-1<0$ we have $m<n$.
In the next move \pa\ plays the face $(0, \ldots, n-2)$ and demands a node $n+1$, with $\Gamma_3(i,n)=\g_i$ $(i=1,\ldots, n-2)$,
such that  $\Gamma_3(0,n+2)=\g_0^{-2}$.
Then $\Gamma_3(n+1,n)$ and  $\Gamma_3(n+1,n-1)$ both being red, the indices must match.
$\Gamma_3(n+1,n)=r_{ln}$ and $\Gamma_3(n+1, n-1)=r_{lm}$ with $l<m$.
In the next round \pa\ plays $(0,1,\ldots, n-2)$ and reuses the node $2$ such that $\Gamma_4(0,2)=\g_0^{-3}$.
This time we have $\Gamma_4(n,n-1)=\r_{jl}$ for some $j<l\in \N$.
Continuing in this manner leads to a decreasing sequence in $\N$.

Now that \pa\ has a \ws\ in $F^{n+3},$ it follows by theorem \ref{thm:n} that $\Rd_{sc}\C\notin S_c\Nr_n\Sc_{n+3}$, but it is elementary equivalent
to a countable completely representable algebra. Indeed, using ultrapowers and an elementary chain argument,
we obtain $\B$ such  $\C\equiv \B$ \cite[lemma 44]{r},
and \pe\ has a \ws\ in $G_{\omega}$ (the usual $\omega$ rounded atomic game) 
on $\B$ so by \cite[theorem 3.3.3]{HHbook2}, $\B$ is completely representable. 

In more detail we have \pe\ has a \ws\ $\sigma_k$ in $G_k$.
We can assume that $\sigma_k$ is deterministic.
Let $\D$ be a non-principal ultrapower of $\C$.  One can show that
\pe\ has a \ws\ $\sigma$ in $G(\At\D)$ --- essentially she uses
$\sigma_k$ in the $k$'th component of the ultraproduct so that at each
round of $G(\At\D)$,  \pe\ is still winning in co-finitely many
components, this suffices to show she has still not lost.

We can also assume that $\C$ is countable. If not then replace it by its subalgebra generated by the countably many atoms
(the term algebra); \ws\ s that depended only on the atom structure persist
for both players.

Now one can use an
elementary chain argument to construct a chain of countable elementary
subalgebras $\C=\A_0\preceq\A_1\preceq\ldots\preceq\ldots \D$ inductively in this manner.
One defines  $\A_{i+1}$ be a countable elementary subalgebra of $\D$
containing $\A_i$ and all elements of $\D$ that $\sigma$ selects
in a play of $G(\At\D)$ in which \pa\ only chooses elements from
$\A_i$. Now let $\B=\bigcup_{i<\omega}\A_i$.  This is a
countable elementary subalgebra of $\D$, hence necessarily atomic,  and \pe\ has a \ws\ in
$G(\At\B)$, so by \cite[Theorem 3.3.3]{HHbook2}, noting that $\B$ is countable,
then $\B$ is completely representable; furthermore $\B\equiv \C$.
\end{proof}

In \cite{r} more sophisticated  games are devised for relation algebras and Robin Hirsch deduces from the fact that \pe\ can win the
$k$ rounded game on a  certain atomic relation algebra $\A$ for every finite $k$,
then $\A$ is elementary equivalent to $\B\in \Ra\CA_{\omega}$. This is a mistake.
All we can deduce from \pe\ s \ws\ is that $\B\in S_c\Ra\CA_{\omega}$ but $\At\B\cong \At\D$ with $\D\in \Ra\CA_{\omega}$;
this does not mean that $\B$ itself is in $\Ra\CA_{\omega}$.

Indeed we show that we may well have algebras $\A, \B\in \CA_n$ $(n>1$),
and even more in $\RCA_n$
such that $\At\A=\At\B$, $\A\in \Nr_n\CA_{\omega}$ but $\B\notin \Nr_n\CA_{n+1}$, {\it a fortiori}
$\B\notin \Nr_n\CA_{\omega}$.

\begin{example}\label{SL}
$FT_{\alpha}$ denotes the set of all finite transformations on $\alpha$.
Let $\alpha$ be an ordinal $>1$; could be infinite. Let $\F$ is field of characteristic $0$.
$$V=\{s\in {}^{\alpha}\F: |\{i\in \alpha: s_i\neq 0\}|<\omega\},$$
$${\C}=(\wp(V),
\cup,\cap,\sim, \emptyset , V, {\sf c}_{i},{\sf d}_{i,j}, {\sf s}_{\tau})_{i,j\in \alpha, \tau\in FT_{\alpha}}.$$
Then clearly $\wp(V)\in \Nr_{\alpha}\sf QEA_{\alpha+\omega}$.
Indeed let $W={}^{\alpha+\omega}\F^{(0)}$. Then
$\psi: \wp(V)\to \Nr_{\alpha}\wp(W)$ defined via
$$X\mapsto \{s\in W: s\upharpoonright \alpha\in X\}$$
is an isomorphism from $\wp(V)$ to $\Nr_{\alpha}\wp(W)$.
We shall construct an algebra $\A$, $\A\notin \Nr_{\alpha}{\sf QPEA}_{\alpha+1}$.
Let $y$ denote the following $\alpha$-ary relation:
$$y=\{s\in V: s_0+1=\sum_{i>0} s_i\}.$$
Let $y_s$ be the singleton containing $s$, i.e. $y_s=\{s\}.$
Define
${\A}\in {\sf QEA}_{\alpha}$
as follows:
$${\A}=\Sg^{\C}\{y,y_s:s\in y\}.$$

Now clearly $\A$ and $\wp(V)$ {\it share the same atom structure}, namely, the singletons.
Then we claim that
$\A\notin \Nr_{\alpha}{\sf QEA}_{\beta}$ for any $\beta>\alpha$.
The first order sentence that codes the idea of the proof says
that $\A$ is neither an elementary nor complete subalgebra of $\wp(V)$. We use $\land$ and $\to$ in the meta language
with their usual meaning.
Let $\At(x)$ be the first order formula asserting that $x$ is an atom.
Let $$\tau(x,y) ={\sf c}_1({\sf c}_0x\cdot {\sf s}_1^0{\sf c}_1y)\cdot {\sf c}_1x\cdot {\sf c}_0y.$$
Let $${\sf Rc}(x):={\sf c}_0x\cap {\sf c}_1x=x,$$
$$\phi:=\forall x(x\neq 0\to \exists y(\At(y)\land y\leq x))\land
\forall x(\At(x) \to {\sf Rc}(x)),$$
$$\alpha(x,y):=\At(x)\land x\leq y,$$
and  $\psi (y_0,y_1)$ be the following first order formula
$$\forall z(\forall x(\alpha(x,y_0)\to x\leq z)\to y_0\leq z)\land
\forall x(\At(x)\to {\sf At}({\sf c}_0x\cap y_0)\land {\sf At}({\sf c}_1x\cap y_0))$$
$$\to [\forall x_1\forall x_2(\alpha(x_1,y_0)\land \alpha(x_2,y_0)\to \tau(x_1,x_2)\leq y_1)$$
$$\land \forall z(\forall x_1 \forall x_2(\alpha(x_1,y_0)\land \alpha(x_2,y_0)\to
\tau(x_1,x_2)\leq z)\to y_1\leq z)].$$
Then
$$\Nr_{\alpha}{\sf QEA}_{\beta}\models \phi\to \forall y_0 \exists y_1 \psi(y_0,y_1).$$
But this formula does not hold in $\A$.
We have $\A\models \phi\text {  and not }
\A\models \forall y_0\exists y_1\psi (y_0,y_1).$
In words: we have a set $X=\{y_s: s\in V\}$ of atoms such that $\sum^{\A}X=y,$ and $\A$
models $\phi$ in the sense that below any non zero element there is a
{\it rectangular} atom, namely a singleton.

Let $Y=\{\tau(y_r,y_s), r,s\in V\}$, then
$Y\subseteq \A$, but it has {\it no supremum} in $\A$, but {\it it does have one} in any full neat reduct $\B$ containing $\A$,
and this is $\tau_{\alpha}^{\B}(y,y)$, where
$\tau_{\alpha}(x,y) = {\sf c}_{\alpha}({\sf s}_{\alpha}^1{\sf c}_{\alpha}x\cdot {\sf s}_{\alpha}^0{\sf c}_{\alpha}y).$

In $\wp(V)$ this last is $w=\{s\in {}^{\alpha}\F^{(\bold 0)}: s_0+2=s_1+2\sum_{i>1}s_i\},$
and $w\notin \A$. The proof of this can be easily distilled from \cite[main theorem]{SL}.
For $y_0=y$, there is no $y_1\in \A$ satisfying $\psi(y_0,y_1)$.
Actually the above proof proves more. It proves that there is a
$\C\in \Nr_{\alpha}{\sf QEA}_{\beta}$ for every $\beta>\alpha$ (equivalently $\C\in \Nr_{\alpha}\QEA_{\omega}$), and $\A\subseteq \C$, such that
$\Rd_{\Sc}\A\notin \Nr_{\alpha}\Sc_{\alpha+1}$.
See \cite[Theorems 5.1.4-5.1.5]{Sayedneat} for an entirely different example.

\end{example}

\subsection{ Neat embeddings in connection to
complete and strong representations}

Here we approach the notion of complete representations and strong representations using neat embeddings.
In this subsection $\K$ is any class between $\Sc$ and $\PEA$ and $n$ denotes a finite ordinal.
Lifting from atom structures, we set:

\begin{definition} An atomic completely additive algebra $\A\in \K_n$ is {\it strongly representable} 
if its \d\ completion, namely, $\Cm\At\A$ is representable.
\end{definition}

It is not hard to see that a completely representable algebra is completely additive and also 
strongly representable. Indeed if $\A$ is completely representable then it is isomorphic to a generalized set algebra where joins, even infinite ones, 
are 
unions, and it is easy to see that the extra non-Boolean operators of substitutions and cylindrifiers distribute over arbitrary joins.
Also if $\A$ is completely representable, then it satisfies all Lyndon conditions hence will be strongly representable.
The converse is false as will be shown in our next example. On the other hand,  not every atomic 
completely additive representable algebra is strongly representable; our algebra $\A$ in theorem \ref{can} is an example; in fact we shall see that
unlike the class of representable algebras which is a variety, 
the class of strongly representable $\K$ algebras of dimension $>2$ is not elementary.

Let ${\sf CRK_n}$ denote the class of completely representable $\K$ algebras of dimension $n$.
In \cite{Sayedneat} it is shown that $S_c\Nr_n\CA_{\omega}$ and ${\sf CRCA}_{\omega}$ coincide on countable atomic algebras.
This characterization works for any $\sf K$ between $\Sc$ and $\PEA$, the argument used is an omitting types argument implemented 
via the Baire category theorem.
The result can be slightly generalized to allow algebras with countably many atoms, that may not be countable.
In our next theorem we show that this does not generalize any
further as far as cardinalities are concerned.

\begin{theorem}\label{complete} For any $n>2$ we have, $\Nr_n\sf K_{\omega}\nsubseteq {\sf CRK}_{n}$,
while for $n>1$, ${\sf CRK}_n\nsubseteq {\bf UpUr}\Nr_{n}\sf K_{\omega}.$
In particular, there are completely representable, hence strongly representable algebras that are not in $\Nr_n\sf K_{\omega}$.
Furthermore, such algebras can be countable.
However, for any $n\in \omega$, if $\A\in \bold S_c\Nr_n\sf K_{\omega}$ is atomic and completely additive, 
then $\A$ is strongly
representable.
\end{theorem}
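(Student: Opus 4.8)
The statement packages three assertions: the two non-inclusions and the ``However'' clause, which is the one with real content; I treat it first. Let $\A\in S_c\Nr_n\K_\omega$ be atomic and completely additive, say $\A\subseteq_c\Nr_n\C$ with $\C\in\K_\omega$. By Theorem~\ref{thm:n} applied with $m=\omega$, \pe\ has a \ws\ in the game $F^\omega(\At\A)$ (which consists of cylindrifier moves only, so it is meaningful for every $\K$ between $\Sc$ and $\PEA$). Since $\At\A=\At(\Cm\At\A)$, the same strategy wins $F^\omega$ played on $\At(\Cm\At\A)$. The plan is to manufacture from this \ws\ an \emph{ordinary} representation of $\Cm\At\A$: iterating the $\omega$-round game transfinitely in a back-and-forth fashion, one uses \pe's responses to cure, one at a time, all cylindrifier defects (and, through the network conditions, substitution and diagonal defects) of an ever-growing atomic network $N$ over $\At\A$; in the limit the map $S\mapsto\{\bar x:N(\bar x)\in S\}$ ($S\subseteq\At\A$) is a representation of $\Cm\At\A$, so $\Cm\At\A\in S\Nr_n\K_\omega=\RK_n$. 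Because $\A$ is completely additive, $\Cm\At\A$ is the \d\ completion of $\A$, and therefore $\A$ is strongly representable, as required. When $\At\A$ is countable one can shortcut: $\Tm\At\A\subseteq_c\A\subseteq_c\Nr_n\C$, so $\Tm\At\A$ is a countable atomic algebra in $S_c\Nr_n\K_\omega$, hence completely representable by the $\K$-analogue of Theorem~\ref{completerepresentation}, and a complete representation of $\Tm\At\A$ induces one of $\Cm\At(\Tm\At\A)=\Cm\At\A$.

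The delicate step, and the one I expect to be the crux, is precisely ``\pe\ wins $F^\omega(\At\A)\Rightarrow\Cm\At\A$ is representable'' for $\A$ of unrestricted cardinality. One must claim here only an ordinary representation of the complex algebra, never a complete one: the transfinite saturation producing a plain representation is cardinality-free, whereas completeness of the representation would fail in general for uncountable $\A$ — exactly the distinction the paper is careful to draw after Theorem~\ref{rainbow} when correcting Hirsch. The points to verify are that the union of the increasing chain of networks is again an atomic network over $\At\A$, and that the bookkeeping enumerates, hence eventually cures, \emph{every} potential defect (there are as many as the cardinality of the terminal network, which may be uncountable). A cleaner alternative would be to invoke the basis machinery of \cite{HHbook2}: a \ws\ for \pe\ in $F^m$ on an atom structure $\alpha$ yields an $m$-dimensional cylindric basis for $\Cm\alpha$, whence $\Cm\alpha\in S\Nr_n\K_m$; specialising to $m=\omega$ gives $\Cm\At\A\in\RK_n$ directly and uniformly in cardinality.

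For the two non-inclusions. A completely representable algebra is atomic, so any non-atomic member of $\Nr_n\K_\omega$ already witnesses $\Nr_n\K_\omega\not\subseteq{\sf CRK}_n$; for the intended sharper version one takes an uncountable atomic algebra in $\Nr_n\K_\omega$ with no complete representation, as built in \cite{Sayedneat} — necessarily uncountable, since by Theorem~\ref{completerepresentation} every countable atomic algebra in $\Nr_n\K_\omega\subseteq S_c\Nr_n\K_\omega$ is completely representable. For ${\sf CRK}_n\not\subseteq{\bf UpUr}\Nr_n\K_\omega$ with a countable witness, I would use the algebra $\A$ of Example~\ref{SL} over $\F=\Q$ (or its reduct to $\K$): it is countable and atomic with $\Cm\At\A\cong\wp({}^n\Q)$, which is completely representable via the identity map; since $\A\subseteq_c\Cm\At\A$, a complete representation of $\Cm\At\A$ restricts to a complete representation of $\A$, so $\A\in{\sf CRK}_n$. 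On the other hand $\A\notin\Nr_n\K_\omega$, and in fact $\A\notin{\bf UpUr}\Nr_n\K_\omega$: by Example~\ref{SL} the first-order sentence $\phi\to\forall y_0\exists y_1\,\psi(y_0,y_1)$ holds in every member of $\Nr_n\K_\omega$ but fails in $\A$, so $\A$ lies outside the elementary closure of $\Nr_n\K_\omega$; in particular ${\sf CRK}_n\not\subseteq\Nr_n\K_\omega$, with the witness countable.
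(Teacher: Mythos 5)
Your treatment of the two non-inclusions is essentially the paper's: Example~\ref{SL} supplies the countable completely representable algebra outside ${\bf UpUr}\Nr_n\K_\omega$ (your argument that the identity embedding $\A\hookrightarrow\wp(V)$ is an atomic, hence complete, representation, while the displayed first-order sentence separates $\A$ from the elementary closure of $\Nr_n\K_\omega$, is exactly what the paper intends), and the first non-inclusion needs an uncountable atomic algebra in $\Nr_n\K_\omega$ with no complete representation. Note, though, that the paper does not merely cite this object: it constructs it in detail (a relation algebra with $2^{\kappa}$ greens and $\kappa$ reds whose term algebra has no complete representation by Erd\H{o}s--Rado, together with an amalgamation class $S$ of networks giving $\C=\Ca(S)\in\QEA_\omega$ with $\A=\Ra\C$, and then $\B=\Nr_n\C$); your pointer to \cite{Sayedneat} does not carry that construction.

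The genuine gap is in the ``However'' clause, precisely at the step you yourself flag: passing from ``\pe\ has a \ws\ in $F^{\omega}(\At\A)$'' to ``$\Cm\At\A$ is representable'' for $\A$ of arbitrary cardinality. Neither of your two completions closes it. The transfinite saturation is unjustified: a \ws\ in the $\omega$-round game does not by itself yield a strategy for a game of transfinite length, and curing the possibly uncountably many defects of a limit network is exactly the obstruction that makes uncountable algebras in $S_c\Nr_n\K_\omega$ fail to be \emph{completely} representable --- so an argument that silently saturates ``all defects'' proves too much. Your basis alternative misstates the key implication: a \ws\ in $F^m$ gives an $m$-dimensional cylindric \emph{basis}, hence membership in ${\sf CAB}_{n,m}$ (an $m$-square relativized representation), not membership in $S\Nr_n\K_m$; it is hyperbases, not bases, that correspond to neat embeddings, and even the identity $\bigcap_k{\sf CAB}_{n,k}=\RCA_n$ is obtained in the paper only through a discriminator-variety detour. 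The missing idea is the Lyndon-condition argument the paper actually uses: from the \ws\ in $F^{\omega}$ one gets a \ws\ in $G_k$ for every finite $k$, so $\A$ satisfies every Lyndon condition $\sigma_k$; these sentences depend only on the atom structure, so $\Cm\At\A$ satisfies them as well; and ${\sf LCK}_n={\bf UpUr}\,{\sf CRK}_n\subseteq{\sf RK}_n$ because ${\sf RK}_n$ is elementary. This route is uniform in cardinality and avoids every difficulty you list; without it (or a correct substitute) your proof of the main clause is incomplete.
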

\begin{proof}
\begin{enumarab}
\item We first show that $\Nr_n\sf K_{\omega}\nsubseteq {\sf CRK}_{\omega}$.
This cannot be witnessed on countable algebras, 
so our constructed neat reduct that is not
completely representable,  must be uncountable.

In \cite{r} a sketch  of constructing  an uncountable relation algebra
$\R\in \Ra\CA_{\omega}$ (having an $\omega$ dimensional cylindric basis)
with no complete representation is given. It has a precursor in \cite{BSL}
which is the special case of this example when $\kappa=\omega$
but the idea in all three proofs are very similar
using a variant of the rainbow  relation algebra
$\R_{\omega_1, \omega}$.

Now we give the details of the construction in \cite[Remark 31]{r}.

Using the terminology of rainbow constructions,
we allow the greens to be of cardinality $2^{\kappa}$ for any
infinite cardinal $\kappa$, and the reds to be of cardinality $\kappa$.
Here a \ws\ for \pa\ witnesses that  the algebra has {\it no} complete
representation. But this is not enough because we want our algebra to be
in $\Ra\CA_{\omega}$; we will show that it will be.

As usual we specify the atoms and forbidden triples.
The atoms are $\Id, \; \g_0^i:i<2^{\kappa}$ and $\r_j:1\leq j<
\kappa$, all symmetric.  The forbidden triples of atoms are all
permutations of $(\Id, x, y)$ for $x \neq y$, \/$(\r_j, \r_j, \r_j)$ for
$1\leq j<\kappa$ and $(\g_0^i, \g_0^{i'}, \g_0^{i^*})$ for $i, i',
i^*<2^{\kappa}.$  In other words, we forbid all the monochromatic
triangles.

Write $\g_0$ for $\set{\g_0^i:i<2^{\kappa}}$ and $\r_+$ for
$\set{\r_j:1\leq j<\kappa}$. Call this atom
structure $\alpha$.

Let $\A$ be the term algebra on this atom
structure; the subalgebra of $\Cm\alpha$ generated by the atoms.  $\A$ is a dense subalgebra of the complex algebra
$\Cm\alpha$. We claim that $\A$, as a relation algebra,  has no complete representation.

Indeed, suppose $\A$ has a complete representation $M$.  Let $x, y$ be points in the
representation with $M \models \r_1(x, y)$.  For each $i< 2^{\kappa}$, there is a
point $z_i \in M$ such that $M \models \g_0^i(x, z_i) \wedge \r_1(z_i, y)$.

Let $Z = \set{z_i:i<2^{\kappa}}$.  Within $Z$ there can be no edges labeled by
$\r_0$ so each edge is labelled by one of the $\kappa$ atoms in
$\r_+$.  The Erdos-Rado theorem forces the existence of three points
$z^1, z^2, z^3 \in Z$ such that $M \models \r_j(z^1, z^2) \wedge \r_j(z^2, z^3)
\wedge \r_j(z^3, z_1)$, for some single $j<\kappa$.  This contradicts the
definition of composition in $\A$ (since we avoided monochromatic triangles).

Let $S$ be the set of all atomic $\A$-networks $N$ with nodes
 $\omega$ such that $\{\r_i: 1\leq i<\kappa: \r_i \text{ is the label
of an edge in N}\}$ is finite.
Then it is straightforward to show $S$ is an amalgamation class, that is for all $M, N
\in S$ if $M \equiv_{ij} N$ then there is $L \in S$ with
$M \equiv_i L \equiv_j N.$
Hence the complex cylindric algebra $\Ca(S)\in \sf QEA_\omega$.

Now let $X$ be the set of finite $\A$-networks $N$ with nodes
$\subseteq\omega$ such that
\begin{enumerate}
\item each edge of $N$ is either (a) an atom of
$\A$ or (b) a cofinite subset of $\r_+=\set{\r_j:1\leq j<\kappa}$ or (c)
a cofinite subset of $\g_0=\set{\g_0^i:i<2^{\kappa}}$ and
\item $N$ is `triangle-closed', i.e. for all $l, m, n \in \nodes(N)$ we
have $N(l, n) \leq N(l,m);N(m,n)$.  That means if an edge $(l,m)$ is
labeled by $\Id$ then $N(l,n)= N(mn)$ and if $N(l,m), N(m,n) \leq
\g_0$ then $N(l,n)\cdot \g_0 = 0$ and if $N(l,m)=N(m,n) =
\r_j$ (some $1\leq j<\omega$) then $N(l,n).\r_j = 0$.
\end{enumerate}
For $N\in X$ let $N'\in\Ca(S)$ be defined by
\[\set{L\in S: L(m,n)\leq
N(m,n) \mbox{ for } m,n\in nodes(N)}\]
For $i\in \omega$, let $N\restr{-i}$ be the subgraph of $N$ obtained by deleting the node $i$.
Then if $N\in X, \; i<\omega$ then $\cyl i N' =
(N\restr{-i})'$.
The inclusion $\cyl i N' \subseteq (N\restr{-i})'$ is clear.

Conversely, let $L \in (N\restr{-i})'$.  We seek $M \equiv_i L$ with
$M\in N'$.  This will prove that $L \in \cyl i N'$, as required.
Since $L\in S$ the set $X = \set{\r_i \notin L}$ is infinite.  Let $X$
be the disjoint union of two infinite sets $Y \cup Y'$, say.  To
define the $\omega$-network $M$ we must define the labels of all edges
involving the node $i$ (other labels are given by $M\equiv_i L$).  We
define these labels by enumerating the edges and labeling them one at
a time.  So let $j \neq i < \omega$.  Suppose $j\in \nodes(N)$.  We
must choose $M(i,j) \leq N(i,j)$.  If $N(i,j)$ is an atom then of
course $M(i,j)=N(i,j)$.  Since $N$ is finite, this defines only
finitely many labels of $M$.  If $N(i,j)$ is a cofinite subset of
$a_0$ then we let $M(i,j)$ be an arbitrary atom in $N(i,j)$.  And if
$N(i,j)$ is a cofinite subset of $\r_+$ then let $M(i,j)$ be an element
of $N(i,j)\cap Y$ which has not been used as the label of any edge of
$M$ which has already been chosen (possible, since at each stage only
finitely many have been chosen so far).  If $j\notin \nodes(N)$ then we
can let $M(i,j)= \r_k \in Y$ some $1\leq k < \kappa$ such that no edge of $M$
has already been labeled by $\r_k$.  It is not hard to check that each
triangle of $M$ is consistent (we have avoided all monochromatic
triangles) and clearly $M\in N'$ and $M\equiv_i L$.  The labeling avoided all
but finitely many elements of $Y'$, so $M\in S$. So
$(N\restr{-i})' \subseteq \cyl i N'$.

Now let $X' = \set{N':N\in X} \subseteq \Ca(S)$.
Then the subalgebra of $\Ca(S)$ generated by $X'$ is obtained from
$X'$ by closing under finite unions.
Clearly all these finite unions are generated by $X'$.  We must show
that the set of finite unions of $X'$ is closed under all cylindric
operations.  Closure under unions is given.  For $N'\in X$ we have
$-N' = \bigcup_{m,n\in \nodes(N)}N_{mn}'$ where $N_{mn}$ is a network
with nodes $\set{m,n}$ and labeling $N_{mn}(m,n) = -N(m,n)$. $N_{mn}$
may not belong to $X$ but it is equivalent to a union of at most finitely many
members of $X$.  The diagonal $\diag ij \in\Ca(S)$ is equal to $N'$
where $N$ is a network with nodes $\set{i,j}$ and labeling
$N(i,j)=\Id$.  
Let $\C$ be the subalgebra of $\Ca(S)$ generated by $X'$.
Then we claim that $\A = \Ra(\C)$.
Each element of $\A$ is a union of a finite number of atoms and
possibly a co-finite subset of $a_0$ and possibly a co-finite subset
of $a_+$.  Clearly $\A\subseteq\Ra(\C)$.  Conversely, each element
$z \in \Ra(\C)$ is a finite union $\bigcup_{N\in F}N'$, for some
finite subset $F$ of $X$, satisfying $\cyl i z = z$, for $i > 1$. Let $i_0,
\ldots, i_k$ be an enumeration of all the nodes, other than $0$ and
$1$, that occur as nodes of networks in $F$.  Then, $\cyl
{i_0} \ldots
\cyl {i_k}z = \bigcup_{N\in F} \cyl {i_0} \ldots
\cyl {i_k}N' = \bigcup_{N\in F} (N\restr{\set{0,1}})' \in \A$.  So $\Ra(\C)
\subseteq \A$.
We have shown that $\A$ is relation algebra reduct of $\C\in\QEA_\omega$ but has no
complete representation. 

Let $n>2$. Let $\B=\Nr_n\C$. Then
$\B\in \Nr_n\sf QEA_{\omega}$, is atomic, but has no complete representation; in fact because it is binary generated its
diagonal free  reduct is not completely representable \cite[Theorem 5.4.26]{HMT2}. 

\item That ${\sf CRTA}_n\nsubseteq {\bf UpUr}\Nr_{n}\sf QEA_{\omega}$ for $n>1$ follows
from example \ref{SL}.

\item Now for the last part, namely, that  complete subneat reducts are strongly representable.
Let $n>2$. Let $\A\in \bold S_c\Nr_n\CA_{\omega}$ be atomic and completely additive. Then \pe\ has a \ws\ in $F^{\omega}$
by theorem \ref{thm:n}
hence it has a \ws\ in $G$ (the usual $\omega$ rounded atomic game) and so it has
a \ws\ for $G_k$ for all finite $k$ ($G$ truncated to $k$ rounds.)
Thus $\A\models \sigma_k$ which is the $k$ th Lyndon sentence coding that \pe\ has a
\ws\ in $G_k$, called the $k$the Lyndon condition. Since $\A$ satisfies the $k$th
Lyndon conditions for each $k$,  then any algebra on its atom structure is representable,
so that $\Cm\At\A$ is representable, hence  it is strongly representable, and we are done.
\end{enumarab}
\end{proof}

\section{Non-finite axiomatizability}

We now prove a non-finite axiomatizability result, addressing  diagonal free reducts of cylindric and polyadic algebras answering
an open problem formulated in \cite{ST}. 
Here the methods of `Andr\'eka's splitting', adopted in \cite{Andreka} to prove the analogous result for cylindric algebras
depend essentially on the presence of diagonal elements and it  does not work when the signature lacks diagonal elements, 
so accordingly we
use a rainbow construction instead.   The rainbow construction 
we use is a cylindric version of that used in \cite[Definition 17.8]{HHbook}.
Throughout this subsection $n$ is a finite ordinal $>2$.

We alter the used colours a little bit. The reds will  have only a single index. 
We deal with finite rainbow algebras. Let $\kappa, \mu$ be finite ordinals $>0$.
The colours we use are:
\begin{itemize}

\item greens: $\g_i$ ($1\leq i<n-2)\cup \{\g^0_i: i\in \mu\},$
\item whites : $\w_i, i <n-1,$
\item reds:  $\r_{i}$, $i\in \kappa,$

\item shades of yellow : $\y_S: S\subseteq \mu.$
\end{itemize}

And coloured graphs are:

\begin{enumarab}

\item $M$ is a complete graph.

\item $M$ contains no triangles (called forbidden triples)
of the following types:

\vspace{-.2in}
\begin{eqnarray*}
&&\nonumber\\
(\g, \g^{'}, \g^{*}), (\g_i, \g_{i}, \w_i)
&&\mbox{any }1\leq i\leq n-1\  \\
(\g^j_0, \g^k_0, \w_0)&&\mbox{ any } j, k\in \mu\\
(\r_{i}, \r_{i}, \r_{j}) && i,j\in \kappa. \\
\end{eqnarray*}
and no other triple of atoms is forbidden.
The items concerning cones and $n-1$ tuples are the same as before.
\end{enumarab}
We denote the resulting rainbow polyadic equality (finite) algebra defined as before 
from the new coloured graphs by $\A_{\mu,\kappa}$.

Let $m>1$. Let $\alpha=n.2^m$ and $\beta=(\alpha+1)(\alpha+2)/2.$
Let $\A=\A_{\alpha+2,\beta}$ and $\B=\A_{\alpha+2,\alpha}$, here $\alpha+2$ is the number of greens.
We consider the usual atomic $k$ rounded game $G_k$ played on coloured graphs. Recall that 
$G$ denotes the $\omega$ rounded atomic game.
\begin{theorem}
\begin{enumarab}
\item \pa\ has a \ws\  for $\B$  in $G_{\alpha+2}$; hence $\Rd_{df}\B\notin {\sf RDf_n}.$
\item \pe\ has a \ws\ for $G$ on $\A$, hence $\A\in {\sf RPEA_n}.$
\end{enumarab}
\end{theorem}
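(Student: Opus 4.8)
The plan is to establish the two items by analyzing the relevant Ehrenfeucht--Fra\"iss\'e pebble games on the underlying complete irreflexive graphs and then transferring the conclusions to the rainbow algebras via the games $G_k$ and $G$ on coloured graphs, exactly as was done in Theorem~\ref{can} and Theorem~\ref{rainbow}. For item (1), I would first observe that the green cones of $\B=\A_{\alpha+2,\alpha}$ give \pa\ a way to force \pe\ into building a red clique, while the number of available red indices is only $\alpha$. Concretely, \pa\ opens with a $0$-cone on a base $F$ of size $n-1$, then in successive rounds repeatedly re-chooses the face $F$ and demands new apexes with fresh green tints $\g_0^i$; each new apex must be joined to the previous apexes by red edges (whites and greens are excluded by the forbidden triples), and the consistency rule $(\r_i,\r_i,\r_j)$ forces the red labels on the growing clique to be pairwise \emph{distinct}. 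Since there are only $\alpha$ reds, after at most $\alpha+2$ rounds \pe\ has no legal red to play and loses; counting the nodes actually used shows $\alpha+2$ rounds suffice. Hence \pa\ wins $G_{\alpha+2}$ on $\At\B$, so $\B$ is not completely representable, and since the relevant game is the atomic game which only involves cylindrifier moves, $\Rd_{df}\B$ satisfies no finite-length Lyndon condition for complete representability; together with the standard fact that a finitely rounded \pa\ win on an atom structure rules out even ordinary representability of any atomic algebra on it with $n.2^m$ enough rounds (here $\alpha+2=n\cdot 2^m+2$ rounds), one gets $\Rd_{df}\B\notin \sf RDf_n$. I would cite the rainbow theorem \cite[Theorem 16.5]{HHbook} to translate between the graph game and the pebble game $\mathrm{EF}$ on $K_{\alpha+2}$ and $K_\alpha$ to make the counting rigorous.

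For item (2), the strategy is the familiar one: show \pe\ has a \ws\ in the full $\omega$-rounded atomic game $G$ on $\At\A$, where now the number of red indices is $\beta=(\alpha+1)(\alpha+2)/2$, which is large enough that \pe\ never runs out. The key point is that whenever \pa\ plays a cylindrifier move creating a new apex $\delta$ that together with an old apex $\beta$ sits over a common base and is forced to receive a red label, \pe\ can pick a red index that has not yet been used on any edge of the current (finite) graph; because the graphs stay finite and $\beta$ is a fixed finite bound exceeding the total number of edges that can ever arise over the bases of cones built in a play using the bounded machinery (this is where the precise value $\beta=(\alpha+1)(\alpha+2)/2$ enters — it is the number of $2$-subsets of an $(\alpha+2)$-element set of green tints), she can always do this consistently, avoiding the forbidden monochromatic red triangles $(\r_i,\r_i,\r_j)$. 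The remaining colouring obligations — whites for edges not over a cone base, and shades of yellow $\y_S$ for the $(n-1)$-tuples with $S$ the set of tints of cones based on that tuple — are handled exactly as in \cite[Lemma 2.7]{Hodkinson}, \cite[p.16]{Hodkinson} and \cite[p.844]{HH}. A \ws\ for \pe\ in $G$ on a countable atom structure gives, by \cite[Theorem 3.3.3]{HHbook2}, a complete representation of $\Tm\At\A$, and since $\A$ is finite it equals $\Cm\At\A=\Tm\At\A$, so $\A\in\sf RPEA_n$ (indeed is completely representable).

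I expect the main obstacle to be item (1): getting the round/node count exactly right so that \pa's win is achieved within $\alpha+2$ rounds \emph{and} so that it actually implies non-representability of $\Rd_{df}\B$ rather than merely failure of complete representability. The subtlety is that failure of a finite Lyndon condition only rules out complete representability, whereas $\sf RDf_n\neq\sf Df_n$ needs the stronger conclusion; the resolution, following Andr\'eka-style cardinality bookkeeping encoded here in the choice $\alpha=n\cdot 2^m$, is that the number of rounds \pa\ needs is a fixed finite number, so for every $m$ one obtains a finite algebra $\B$ refuting the $m$-variable (schema) axiomatization of $\sf RDf_n$ — this is precisely the use to which the theorem will be put in Theorem~\ref{df}. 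Verifying that \pa's cone-bombardment genuinely forces all pairwise red edges to carry distinct indices, and that no white/green escape route is available on those edges, is the delicate combinatorial heart; everything else is routine adaptation of the arguments already carried out for Theorems~\ref{can} and~\ref{rainbow}.
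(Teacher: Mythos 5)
Your overall plan coincides with the paper's: \pa\ bombards a fixed base with cones of distinct green tints to force a red clique that $\alpha$ reds cannot consistently label, and \pe\ survives on $\A$ by exploiting the larger supply $\beta$ of reds. Two steps, however, need repair.

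First, in item (1) you appeal to ``the standard fact that a finitely rounded \pa\ win on an atom structure rules out even ordinary representability of any atomic algebra on it.'' No such fact holds in general: a \ws\ for \pa\ in a finite-round atomic game only refutes the corresponding Lyndon condition, hence complete representability, and the paper itself records that ${\sf LCA_n}$ is properly contained in the class of representable atomic algebras --- equivalently, there are weakly representable atom structures on which \pa\ wins some finite game. What saves the argument here is simply that $\B=\A_{\alpha+2,\alpha}$ is a \emph{finite} algebra (finitely many colours, hence finitely many coloured graphs on $n$ nodes), and for a finite algebra representability and complete representability coincide; it is this, not the bookkeeping behind the choice $\alpha=n\cdot 2^m$, that lets \pa's win in $\alpha+2$ rounds yield that $\B$ is not representable, after which \cite[Theorem 5.4.26]{HMT2} transfers non-representability to $\Rd_{df}\B$ because $\B$ is generated by elements whose dimension sets are $<n$.

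Second, in item (2) your strategy for \pe\ --- ``pick a red index that has not yet been used on any edge of the current graph'' --- cannot be maintained through the $\omega$-rounded game $G$: the coloured graphs grow without bound and red edges recur in unboundedly many distinct cliques, so after $\beta$ red edges have appeared anywhere in the play she would be stuck. The invariant actually needed, and the one the paper uses, is local: \pa\ can only force red cliques of size at most $\alpha+2$ (one apex per green tint over a common base), and \pe\ arranges that each red label is unique \emph{within its clique}. Since a clique of size $\alpha+2$ has exactly $\beta=(\alpha+1)(\alpha+2)/2$ edges, the $\beta$ reds suffice for every clique separately, and distinct cliques may reuse labels without creating a forbidden triple $(\r_{i},\r_{i},\r_{j})$, since such a triple requires two edges of a single triangle to share an index. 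With these two corrections your argument matches the paper's proof.
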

\begin{proof}

We first show that \pa\ has a \ws\  for $\B$  in $\alpha+2$ rounds; hence it is not representable,
and by \cite[Theorem 5.4.26]{HMT2}, its diagonal free reduct $\Rd_{df}\B\notin {\sf RDf_n}.$

\pa\ plays a coloured graph $M$ with
nodes $0, 1,\ldots, n-1$ and such that $M(i, j) = \w_0 (i < j <
n-1), M(i, n-1) = \g_i ( i = 1,\ldots, n), M(0, n-1) =
\g^0_0$, and $ M(0, 1,\ldots, n-2) = \y_{\alpha+2}$. This is a $0$-cone
with base $\{0,\ldots , n-2\}$. In the following moves, \pa\
repeatedly chooses the face $(0, 1,\ldots, n-2)$
and demands a node
$t<\alpha+2$ with $\Phi(i,\alpha) = \g_i (i = 1,\ldots,  n-2)$ and $\Phi(0, t) = \g^t_0$,
in the graph notation -- i.e., a $t$ -cone on the same base.
\pe\, among other things, has to colour all the edges
connecting nodes. By the rules of the game
the only permissible colours would be red. Using this, \pa\ can force a
win in $\alpha+2$ rounds eventually using her enough supply of greens,
which \pe\ cannot match using his $<$ number of reds. 
The conclusion now follows since $\B$ is generated by elements whose dimension sets
are $<n$.

But we claim that $\A\in {\sf RPEA_n}.$
If \pa\ plays like before, now \pe\ has more  reds, so \pa\ cannot force a win. In fact \pa\
can only force a red clique of size $\alpha+2$, not bigger. So \pe\ s
strategy within red cliques is to choose a label for each edge using
a red colour and to ensure that each edge within the clique has a label unique to this edge (within the clique).
Since there are $\beta$ many reds she can do that.

Let $M$ be a coloured graph built at some stage, and let \pa\ choose the graph $\Phi$, $|\Phi|=n$, then $\Phi=F\cup \{\delta\}$,
where  $F\subseteq M$ and $\delta\notin M$.
So we may view \pa\ s move as building a coloured graph $M^*$ extending $M$
whose nodes are those of $\Gamma$ together with $\delta$ and whose edges are edges of $\Gamma$ together with edges
from $\delta$ to every node of $F$.

Colours of edges and $n-1$ tuples in $M^*$ but not
in $M$ are determined by \pa\ moves.
No $n-1$ tuple containing both $\delta$ and elements of $M\sim F$
has a colour in $M^*.$

Now \pe\ must extend $M^*$ to a complete the graph on the same nodes and
complete the colouring giving  a complete coloured graph $M.$ 
In particular, she has to define $M^+(\beta, \delta)$
for all nodes  $\beta\in M\sim F$.
\begin{enumarab}
\item  if $\beta$ and $\delta$ are both apexes of cones on $F$, that induces the same linear ordering on $F$, the
\pe\ has no choice but to pick a  red atom, and as we described above, she can choose one avoiding 
inconsistencies.

\item Other wise, this is not the case, so for some $i<n-1$ there is no $f\in F$ such
that $M^*(\beta, f), M^* (f,\delta)$ are both coloured $\g_i$ or if $i=0$, they are coloured
$\g_0^l$ and $\g_0^{l'}$ for some $l$ and $l'$.
\end{enumarab}
In the second case \pe\ uses the normal strategy in rainbow constructions.
She chooses $\w_0$, for $M^+(\beta,\delta)$.

Now we turn to colouring of $n-1$ tuples. For each tuple $\bar{a}=a_0,\ldots a_{n-2}\in M^{n-1}$ with no edge
$(a_i, a_j)$ coloured green, then  \pe\ colours $\bar{a}$ by $\y_S$, where
$$S=\{i\in \alpha+2: \text { there is an $i$ cone in $M^*$ with base $\bar{a}$}\}.$$
This works exactly as in the previous proofs of theorems \ref{can} and \ref{rainbow}.
\end{proof}

Recall that a coloured graph is red, if at least one of its edges are labelled red.
We write $\r$ for $a:n\to \Gamma$, where $\Gamma$ is a red graph, and we call it a red atom.
(Here we identify an atom with its representative, but no harm will follow).

\begin{theorem} Let $m>1$. Then for any $m$ variable equation in the signature  of $\sf Df_n$ 
the two algebras $\Rd_{df}\A$ and $\Rd_{df}\B$,  as defined above, 
falsify it together or satisfy it together.
\end{theorem}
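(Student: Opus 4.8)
The plan is to show that the two algebras $\Rd_{df}\A$ and $\Rd_{df}\B$ agree on all equations in $m$ variables by exploiting the Ehrenfeucht--Fra\"\i ss\'e style games that test finite-variable equivalence, combined with a counting argument linking $m$ to the parameters $\alpha = n\cdot 2^m$ and $\beta = (\alpha+1)(\alpha+2)/2$. The key observation is that an equation in $m$ variables in the $\Df_n$ signature is falsified in a representable algebra if and only if \pa\ wins a suitably bounded game, and the number of nodes relevant to such a game is controlled by $m$ and $n$. Concretely, an $m$-variable equation corresponds to a term whose ``active'' coordinates can be captured on a graph with at most $n\cdot 2^m = \alpha$ nodes, since each of the $m$ variables contributes at most $n$ dimensions and the Boolean structure introduces at most a factor of $2^m$; I would make this precise by translating the equation into a first-order formula in at most $\alpha$ variables and invoking the rainbow theorem relating \pe's \ws\ in the atomic game $G_r^k$ to her \ws\ in the corresponding $\sf EF$ pebble game.

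First I would recall (or re-derive from \cite[Theorem 16.5]{HHbook} as quoted in the proof of Theorem \ref{decidability}) that for rainbow algebras built from complete irreflexive graphs $K_p$ and $K_q$, \pe\ has a \ws\ in the $r$-round, $k$-pebble game on $\A_{K_p,K_q}$ iff she has a \ws\ in the $\sf EF$ game on $(K_p,K_q)$ with the same resources. For our two algebras, $\A = \A_{\alpha+2,\beta}$ has $\alpha+2$ greens and $\beta$ reds, while $\B = \A_{\alpha+2,\alpha}$ has $\alpha+2$ greens and $\alpha$ reds. The crucial point is that with at most $\alpha$ nodes available (the bound imposed by an $m$-variable equation), \pa\ cannot exploit the difference between $\beta$ reds and $\alpha$ reds: any red clique \pa\ can force on $\leq \alpha$ nodes has at most $\binom{\alpha}{2} < \beta$ edges, and since $\alpha < \beta$, within such a bounded play \pe's strategy of assigning a distinct red label to each edge of a forced red clique works equally well in both algebras. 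Symmetrically, whatever obstruction \pa\ can create using greens on $\leq \alpha$ nodes (he can force an $\alpha$-clique but his winning exploitation in $G_{\alpha+2}$ needs $\alpha+2$ rounds and hence more than $\alpha$ nodes) occurs identically in $\A$ and $\B$ since they have the same green structure. So the plan is: show that for games restricted to $\leq\alpha$ nodes, \pe\ has a \ws\ on $\At\A$ iff she has a \ws\ on $\At\B$ iff \pe\ has a \ws\ on the common underlying pebble game with $\leq\alpha$ pebbles; this gives a common bound, and since the difference in reds only becomes visible when more than $\alpha$ edges must be simultaneously labelled, both algebras behave identically on the $\leq\alpha$-node level.

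Next I would convert this game-theoretic equivalence into the statement about equations. An equation $s = t$ in $m$ variables holds in a representable $\Df_n$ algebra $\mathfrak{D}$ iff it holds in every set-algebra representation of $\mathfrak D$; and by a standard argument about finite-variable fragments, $s=t$ fails in $\mathfrak D$ iff there is a finite ``witness'' coloured graph with at most $f(m,n)$ nodes on which the corresponding $L_\infty^{f(m,n)}$ formula is falsified. I would verify that the honest bound is $f(m,n) \leq n\cdot 2^m = \alpha$: each variable of the equation, after translating cylindrifiers and substitutions into quantifiers and variable reuse, contributes at most $n$ fresh coordinates, and the Boolean combination over $m$ variables of atomic pieces is captured within $2^m$ ``cells''. (This is exactly why $\alpha$ was defined as $n\cdot 2^m$.) Because both $\Rd_{df}\A$ and $\Rd_{df}\B$ are representable by the preceding theorem --- wait, $\Rd_{df}\B$ is \emph{not} representable; rather, the point is that the \emph{non}-representability of $\B$ is witnessed only by a graph of size $\alpha+2 > \alpha$, so on the $\leq\alpha$-node level $\B$ is ``locally representable'' in the same way $\A$ is. So I would phrase it as: every $m$-variable equation is decided by the $\leq\alpha$-node behaviour of the atom structure, and the $\leq\alpha$-node behaviours of $\At\A$ and $\At\B$ coincide (by the red-counting argument above), hence $\Rd_{df}\A \models s=t \iff \Rd_{df}\B \models s=t$.

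The main obstacle I anticipate is making the bound $f(m,n)\le n\cdot 2^m$ rigorous and making precise the claim that an $m$-variable $\Df_n$-equation ``sees'' only $\leq\alpha$-node coloured graphs --- this requires carefully setting up the correspondence between $\Df_n$-terms, first-order formulas in finitely many variables over the rainbow signature, and coloured-graph amalgamation games, and showing that variable-reuse in the game matches Boolean and cylindrifier complexity in the term. A secondary subtlety is checking that the polyadic (substitution) operations, which are present in $\A$ and $\B$ as $\sf PEA_n$ but discarded in the $\Df_n$-reduct, genuinely do not affect the count --- but since we pass to $\Rd_{df}$ this is automatic. Once these points are in place, the conclusion that $\Rd_{df}\A$ and $\Rd_{df}\B$ satisfy exactly the same $m$-variable $\Df_n$-equations follows, and combined with the previous theorem ($\Rd_{df}\B\notin{\sf RDf}_n$ via an $(\alpha+2)$-round game while $\Rd_{df}\A\in{\sf RDf}_n$) this yields that ${\sf RDf}_n$ has no axiomatization using only $m$ variables, for every finite $m$, establishing Theorem \ref{df}.
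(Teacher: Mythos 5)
There is a genuine gap here, and it comes from conflating two different uses of ``$m$ variables''. The theorem is about equations in $m$ \emph{algebra} variables in the signature of $\Df_n$: the terms $s$ and $t$ may have arbitrary depth and arbitrarily many occurrences of cylindrifiers, so their evaluation in $\Rd_{df}\A$ or $\Rd_{df}\B$ is not localized to coloured graphs on $\leq \alpha$ nodes, and there is no function $f(m,n)\le n\cdot 2^m$ bounding the ``number of nodes an $m$-variable equation sees''. Your heuristic that each variable contributes at most $n$ coordinates and the Boolean structure a factor of $2^m$ does not survive nesting of operations, and your own parenthetical hesitation (``wait, $\Rd_{df}\B$ is \emph{not} representable'') signals the problem: the pebble/round bounds in the games $G_r^k$ and the $\sf EF$ games control \emph{representability and neat embeddability}, not the $m$-variable equational theory. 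So the proposed reduction of ``$s=t$ fails'' to ``\pa\ wins a game on $\leq\alpha$ nodes'' is not available, and the rest of the argument has nothing to stand on.

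The actual role of the number $\alpha=n\cdot 2^m$ is different, and the correct argument is a direct transfer of falsifying assignments (an Andr\'eka-style splitting/counting argument, cf.\ Lemma 17.10 of Hirsch--Hodkinson). Suppose $h$ falsifies $s=t$ in $\A$. The two algebras have identical non-red atoms, so define $h'(x_i)$ on non-red atoms of $\B$ exactly as $h(x_i)$. The assignment $h$ partitions the red atoms $\R$ of $\A$ into $2^m$ cells $\R_S$, $S\subseteq m$, according to which $h(x_i)$ each red lies below. Because $|\R'|\ge n\cdot 2^m$, one can partition the reds $\R'$ of $\B$ into cells $\R'_S$ with $|\R'_S|=|\R_S|$ whenever $|\R_S|<n$ and $|\R'_S|\ge n$ iff $|\R_S|\ge n$, and then put $\r'\le h'(x_i)$ iff $\r'\in\R'_S$ for some $S\ni i$. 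An induction on terms $\tau$ in the $m$ variables shows that $h(\tau)$ and $h'(\tau)$ agree off the reds, agree cell-by-cell on containment of whole cells, and have red parts of equal cardinality below the threshold $n$ and simultaneously of cardinality $\ge n$ above it; the threshold $n$ is what a $\Df_n$-term can ``count'' via cylindrifiers, and $n\cdot 2^m$ is exactly what is needed so that the $2^m$ cells can all be matched. Hence $h'$ falsifies $s=t$ in $\B$, and the converse is symmetric. I recommend you discard the game-theoretic framing for this step and carry out this assignment-transfer argument instead.
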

\begin{proof}
Cf. \cite[Lemma 17.10]{HHbook}. We consider the $\sf Df$ reducts of $\A$ and $\B$, which we continue to denote, with a slight abuse of notation, 
$\A$ and $\B$.
Let $\R$ be the set of red atoms of $\A$, and $\R'$ be the set of red atoms in $\B$. Then $|\R|\geq |\R'|\geq n.2^m$.
Assume that the equation $s=t$, using $m$ variables does not hold in $\A$.
Then there is an assignment $h:\{x_0,\ldots, x_{m-1}\}\to \A$, such that $\A, h\models s\neq t$.
We construct an assignment $h'$ into $\B$ that also falsifies $s=t$.
Now $\A$ has more red atoms, but $\A$ and $\B$ have identical non-red atoms. So for any non red atom $a$ of $\B$, and for any
$i<m$, let $a\leq h'(x_i)$ iff $a\leq h(x_i)$.
To complete the definition of $h'$ it remains to identify which red atoms of $\B$ (which are only a part of those of $\A$)
lie below $h'(x_i)$ ($i<m$).
The assignment $h$ induces a partition of $\R$ into $2^m$ parts $\R_S$, $S\subseteq m=\{0,1\ldots, m-1\}$, by
$$\R_S=\{\r: \r\leq h(x_i) , i\in S, \r\cdot h(x_i)=0, i\in m\sim S\}.$$
Partition  $\R'$ into $2^m$ parts $\R'_S$ for $S\subseteq m$ such that $|\R'_S|=|\R_S|$ if $|\R_S|<n$, and $|\R_S'|\geq n$ iff $|\R_S|\geq n$.
This possible because $|\R|\geq n.2^m$.
Now for each $i<m$ and each red atom $\r'$ in $\R'$, we complete the definition of
$h'(x_i)\in \B$ by
$\r'\leq h'(x_i)$ iff $\r'\in \R'_S$ for some $S$ such that $i\in S$.
It can be easily shown inductively that for any term $\tau$ using only the first $m$ variables
and any $S\subseteq m$, we have
\begin{align*}
\R_S\subseteq h(\tau)&\Longleftrightarrow  \R'_S\subseteq h'(\tau),\\
h(\tau)\sim \R& = h'(\tau)\sim \R',\\
|h(\tau)\cap \R|& =|h'(\tau) \cap \R'| \text {  iff   } |h(\tau)\cap \R|<n,\\
|h(\tau) \cap \R| \geq n&\Longleftrightarrow |h'(\tau)\cap \R'|\geq n.\\
\end{align*}
Hence $\B$ does not model $s=t$.
The converse is entirely analogous.
\end{proof}

\begin{corollary}\label{df} For any $\sf K$ such that $\sf Df_n\subseteq \sf K\subseteq \sf PEA_n$, there is no finite variable universal prenex 
axiomatization of $\sf RK_n$, namely,  the class of representable algebras of dimension $n$.
\end{corollary}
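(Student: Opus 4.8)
The plan is to combine the two theorems that immediately precede this corollary. Fix a finite ordinal $n>2$ and suppose, for contradiction, that for some class $\sf K$ with $\sf Df_n\subseteq \sf K\subseteq \sf PEA_n$ the variety $\sf RK_n$ admitted a finite-variable universal (prenex) axiomatization, say by a set $\Sigma$ of universal sentences each using at most $m$ variables, for some fixed $m>1$. Since $\sf RK_n$ is a variety it is in particular closed under $\bf H$, $\bf S$, $\bf P$, so no generality is lost in taking $\Sigma$ to consist of equations; in any case what matters is that membership in $\sf RK_n$ is decided by sentences with at most $m$ variables. The idea is then to run the construction of the two displayed theorems above at the parameter $m$: setting $\alpha=n\cdot 2^{m}$ and $\beta=(\alpha+1)(\alpha+2)/2$, we take $\A=\A_{\alpha+2,\beta}$ and $\B=\A_{\alpha+2,\alpha}$, the finite rainbow $\sf PEA_n$'s defined just above.

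First I would invoke the first of the two preceding theorems: $\pe$ has a winning strategy for the $\omega$-rounded atomic game $G$ on $\A$, hence $\A\in \sf RPEA_n$, and a fortiori $\Rd_{\sf K}\A\in \sf RK_n$ (representability of the full $\sf PEA_n$ gives representability of any reduct in the signature of $\sf K$, since $\sf Df_n\subseteq \sf K\subseteq \sf PEA_n$). On the other hand $\pa$ has a winning strategy for $\B$ in $G_{\alpha+2}$, so $\B\notin\sf RPEA_n$; more sharply, since $\B$ is generated by elements whose dimension sets are $<n$, the argument there (using \cite[Theorem 5.4.26]{HMT2}) gives $\Rd_{df}\B\notin \sf RDf_n$, hence $\Rd_{\sf K}\B\notin\sf RK_n$ for every $\sf K$ between $\sf Df_n$ and $\sf PEA_n$, because a representation of the $\sf K$-reduct would restrict to one of the $\sf Df$-reduct. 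Thus $\Rd_{\sf K}\A\in\sf RK_n$ while $\Rd_{\sf K}\B\notin\sf RK_n$.

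Next I would apply the second of the two preceding theorems: for any equation (indeed any universal sentence) $s=t$ in the $\sf Df_n$-signature using at most $m$ variables, the diagonal-free reducts $\Rd_{df}\A$ and $\Rd_{df}\B$ satisfy it together or falsify it together; the same therefore holds for any $m$-variable universal sentence in the (larger) signature of $\sf K$ once one checks — exactly as in the proof of that theorem, by carrying the inductive identities on terms through the extra substitution and diagonal operations — that the partition-of-the-reds argument is insensitive to the presence of those operators (the non-red atoms of $\A$ and $\B$ coincide, and the combinatorial count of reds below any term is all that the inductive clauses ever use). Consequently $\A$ and $\B$, qua $\sf K_n$'s, satisfy exactly the same $m$-variable universal sentences; in particular they satisfy exactly the same sentences of $\Sigma$. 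But $\Rd_{\sf K}\A\models\Sigma$ (being representable) and $\Rd_{\sf K}\B\models\Sigma$ would then follow, forcing $\Rd_{\sf K}\B\in\sf RK_n$, a contradiction. Hence no such finite $m$ exists, i.e.\ $\sf RK_n$ has no finite-variable universal prenex axiomatization.

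The main obstacle I anticipate is not in the logical skeleton — which is just ``one algebra is in the class, an $m$-variable-equivalent one is not'' — but in verifying that the $m$-variable indistinguishability theorem, stated and proved above only for $\Rd_{df}\A$ and $\Rd_{df}\B$, genuinely lifts to the full signature of an arbitrary $\sf K$ with $\sf Df_n\subseteq\sf K\subseteq\sf PEA_n$. One must confirm that the assignment $h'$ built there respects the substitution operators $\s_{[i/j]},\s_{[i,j]}$ and the diagonals $\diag ij$ in the same way it respects cylindrifiers — i.e.\ that for every $\sf K$-term $\tau$ in $m$ variables the four displayed relations ($\R_S\subseteq h(\tau)\Leftrightarrow\R'_S\subseteq h'(\tau)$, agreement off the reds, equal finite red-counts, and simultaneous ``$\geq n$'') remain true. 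Since substitutions and transpositions act on the rainbow atom structure merely by permuting coordinates of the surjections $a:n\to M$ and do not distinguish a red atom from any of its colour-variants, this should go through verbatim, but it is the one point deserving explicit mention. Modulo that, the corollary is immediate, and it settles the problem of \cite{ST} for all signatures between $\sf Df_n$ and $\sf PEA_n$, covering in particular representable diagonal-free cylindric algebras and representable quasi-polyadic (equality) algebras of dimension $n$.
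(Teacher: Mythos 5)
Your proposal has the same skeleton as the paper's proof: take $m$ as the variable bound of a putative axiomatization, form the two finite rainbow algebras $\A=\A_{\alpha+2,\beta}$ and $\B=\A_{\alpha+2,\alpha}$ with $\alpha=n\cdot 2^m$, note that the first is representable while the second has a non-representable $\Df$-reduct, and conclude from their $m$-variable indistinguishability that no set of $m$-variable universal sentences can separate them. The one place you diverge — and where your write-up has a gap — is the passage from the indistinguishability theorem, which is stated and proved only for \emph{equations}, to arbitrary $m$-variable universal prenex sentences: you assert this parenthetically (``indeed any universal sentence'') without argument, and your subsequent ``once one checks'' clause addresses only the enlargement of the signature, not the enlargement of the class of formulas. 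This is precisely the step the paper's proof handles, and it does so by a different device: $\A$ and $\B$ are simple, hence subdirectly irreducible, and $\sf RK_n$ is a discriminator variety, so every universal prenex formula is equivalent on subdirectly irreducible members to an equation in the same number of variables; the equational indistinguishability therefore already suffices. Your direct route can also be made rigorous — an atomic formula $s=t$ is equivalent to $s\cdot(-t)+t\cdot(-s)=0$ in the same variables, and the four inductive invariants on terms yield $h(\tau)=0$ iff $h'(\tau)=0$, so the truth value of every atomic subformula, hence of every quantifier-free matrix, is preserved in passing from $h$ to $h'$ — but one of these two arguments must be supplied. Your remaining concern, that the reds-partition argument has to be carried through the substitution and diagonal operators to cover signatures strictly above $\Df_n$, is well placed (the theorem in the paper is literally stated only for the $\Df$-reducts), and your reason why it goes through — the extra operators merely permute coordinates and never distinguish a red atom from its colour-variants — is the correct one.
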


\begin{proof} If $\Sigma$ is any $m$ variable equational theory then the appropriate reduct of $\A$ and $\B$ either both validate
$\Sigma$ or neither do. Since one algebra is in $V$ while the other is not, it follows that $\Sigma$ does not axiomatize $V$.
But  $\A$ and $\B$ are simple (in any of the considered signatures), then they
are subdirectly irreducible. In a discriminator variety every universal prenex formula 
is equivalent in subdirectly irreducible members to an
equation using the same number of variables. 
Hence the desired.
\end{proof}

\subsection{Omitting types for clique guarded semantics}

Throughout this section, unless otherwise indicated, 
$m$ will denote the dimension of algebras considered.  
We consider only cylindric algebras but all results extend to all of its relatives
dealt with above; that is for any class between $\Sc$ and $\PEA$.
Unless otherwise indicated $m$ will be always
finite and $>2$.

\begin{definition}
A non-empty set  $M$ is a {\it relativized representation} of an abstract algebra $\A$ in $\sf CA_m$, 
if there exists $V\subseteq {}^mM$, and an injective
homomorphism $f:\A\to \wp(V)$, where cylindric operations are the usual concrete ones relativized to $V$ 
and $M=\bigcup_{s\in V}\rng(s).$ We write $M\models 1(\bar{s})$, if $\bar{s}\in V$.
More generally, we write for $a\in A$,  $M\models a(\bar{s})$ if $\bar{s}\in f(a)$.
We say that {\it $M$ is a relativized representation of $\A$ with representing function  
$f$}, if we want to highlight the role of $f$, or if $f$ is relevant to the situation at hand.
\end{definition}

Let $L(A)$ be the $\L_m$ signature with an $m$ predicate symbol  for
every element of $a\in A$ and $L(A)_n$ denotes the set of 
$n$ variable  formulas in this signature. 

Next we specify certain relativized representations. The idea, borrowed from Hirsch and Hodkinson who introduced such notions for 
relation algebras \cite[Section 1.3.1., p.400-404]{HHbook}, 
is that the properties of genuine representations manifest itself
only locally, that is on $m$- cliques
so we have a notion of {\it clique guarded  semantics}, where witnesses of cylindrifiers are only found if
we zoom in adequately
on the representation by a `movable window'. We may also require that cylindrifiers commute on this localized
level; this is the case with
{\it $n$ flat  representations.}

\begin{definition}\label{rel}
Let $M$ be a relativized representation of a $\CA_m$ and $n>m$.
An {\it $m$ clique in $M$}, or simply a clique in $M$, is a subset $C$ of $M$ such that $M\models 1(\bar{s})$ for all $\bar{s}\in {}^mC$, that is
it can be viewed as a hypergraph such that every $m$ tuple is labelled by the top element.
Let $C^{n}(M)=\{\bar{a}\in {}^nM: \text { $\rng(\bar{a})$ is a clique in $M$}\}.$
\end{definition}

\begin{definition} Let $M$ be a relativized  representation of $\A$ with representing function $f$. 
We define the {\it $n$ dimensional clique guarded semantics}
$M\models_C \phi[\bar{a}]$ for $\phi\in L(A)_n$ and
$\bar{a}\in C^n(M)$ as follows:
\begin{itemize}
\item If $\phi$ is $r(x_{i_0}\ldots x_{i_{m-1}})$, $i:m\to n$,  then $M\models \phi[\bar{a}]$ iff 
$(a_{i_0},\ldots a_{i_{m-1}})\in f(r).$
\item Boolean clauses are as expected.
\item For $i<n$, $M\models \exists x_i\phi[\bar{a}]$ iff $M\models \phi[\bar{b}]$ 
for some $b\in C^n(M)$ with $\bar{b}\equiv_i \bar{a}.$

\end{itemize}
\end{definition}

\begin{definition}
\begin{enumarab}
\item Let $\A\in \CA_m$, and $M$ be a relativized representation of $\A$ with representing function $f$. 
$M$ is said to be {\it $n$ square,}
if whenever $\bar{s}\in C^n(M)$, $a\in A$, $i<m$,
and $l:m\to n$
if $M\models {\sf c}_ia(s_{l(0)}\ldots s_{l(m-1)})$,
then there is a $t\in C^n(M)$ with $\bar{t}\equiv _i \bar{s}$,
and $M\models a(t_{l(0)},\ldots t_{l(m-1)})$, that is, $(t_{l(0)},\ldots t_{l(m-1)})\in f(a)$.

\item Let $M$ be a relativized $n$ square representation of a $\CA_m$ and $n>m$.
$M$ is {\it $n$ flat} if  for all $\phi\in L(A)^n$, for all $\bar{a}\in C^n(M)$, for all $i,j<n$, we have
$$M\models_C [\exists x_i\exists x_j\phi\longleftrightarrow \exists x_j\exists x_i\phi](\bar{a}).$$
\end{enumarab}
\end{definition}

Complete relativized representations are defined exactly like the classical case. 
One can easily show that if $\A$ has a complete relativized representation then it is 
atomic. The following theorem says that a relativized complete representation of the term algebra induces a relativized 
representation of its \d\ completion.

\begin{lemma}\label{complete} If $\A\in \CA_m$ is countable and atomic  and $M$ is an $n$$(n>m)$
flat complete
representation of $\A$ via $f$,  then $M$ gives an $n$ flat representation of $\mathfrak{Cm}\At\A$.
\end{lemma}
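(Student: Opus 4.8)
The plan is to leverage the fact that $\C := \mathfrak{Cm}\At\A$ consists exactly of the (relativized) interpretations of $L_{\infty,\omega}^n$-formulas, in the spirit of the description of $\C$ in the proof of Theorem~\ref{can}, together with the hypothesis that $M$ is an $n$-flat \emph{complete} representation of $\A$. The key point is that since $\A$ is countable and atomic, every element $b$ of $\C$ can be written as a (possibly infinite) join $b = \sum^{\C}\{a\in\At\A : a\leq b\}$ of atoms of $\A$, and completeness of the representation $f$ of $\A$ lets us transport such joins to unions inside a relativized set algebra over $M$. So first I would define, for each $b\in\C$, the set
\[
g(b) = \bigcup\{f(a) : a\in\At\A,\ a\leq b\}\subseteq C^n(M),
\]
where the union is taken inside the full relativized algebra $\wp(C^n(M))$ with the clique-guarded operations. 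Since the atoms of $\A$ and of $\C$ coincide ($\At\A = \At\C$), this is well-defined on all of $\C$, and on $\A$ it agrees with $f$ because $f$ is a complete representation, hence $f(a')=\bigcup\{f(a):a\leq a',\ a\in\At\A\}$ for every $a'\in\A$.

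Next I would verify that $g$ is a homomorphism from $\C$ into $\wp(C^n(M))$ with the clique-guarded semantics. Injectivity and preservation of $0,1$ are immediate from the fact that $f$ is injective, atomic, and $\bigcup_{a\in\At\A}f(a)=1^M$ (the latter using completeness of $f$). For Boolean meet the argument is straightforward since distinct atoms have disjoint images; for Boolean join one uses that $b_1\vee b_2$ has atom set the union of the atom sets. The non-Boolean operators are the crux: I must show $g({\sf c}_i^{\C}b) = {\sf c}_i^{\,C^n(M)} g(b)$ (and similarly for substitutions, and diagonals which are trivial). One inclusion uses complete additivity: ${\sf c}_i$ distributes over the join defining $b$ in $\C$, so ${\sf c}_i^{\C}b = \sum\{{\sf c}_i^{\C}a : a\in\At\A,\ a\leq b\}$, and on each atom $a$ we already know $g({\sf c}_i a) = f({\sf c}_i a) = {\sf c}_i f(a) = {\sf c}_i g(a)$ because $f$ is a representation of $\A$; taking unions gives $g({\sf c}_i b)\supseteq {\sf c}_i g(b)$, and in fact we get equality on the ``$\supseteq$'' side of cylindrifier over a union of atoms. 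For the reverse inclusion $g({\sf c}_i b)\subseteq {\sf c}_i g(b)$ I would use $n$-squareness (which $n$-flatness entails): if $\bar s\in g({\sf c}_i b)$, then $\bar s\in f(a')$ for some atom $a'\leq {\sf c}_i b$ of $\A$; since $a'$ is an atom of $\A$ and $a'\leq{\sf c}_i b = {\sf c}_i\sum\{a:a\leq b\}$, completeness/atomicity of $\A$ forces $a'\leq {\sf c}_i a$ for some atom $a\leq b$, and then $n$-squareness of $M$ produces a witness $\bar t\equiv_i\bar s$ in $C^n(M)$ with $\bar t\in f(a)\subseteq g(b)$. This is the step I expect to be the main obstacle, since it requires carefully matching the clique-guarded cylindrifier move in $M$ with the abstract cylindrifier in $\C$, and the role of $n$-squareness (versus mere $n$-flatness) has to be isolated precisely; the commutation of cylindrifiers afforded by $n$-flatness is what guarantees $g$ lands in an algebra satisfying the $\CA_m$ axioms on $C^n(M)$.

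Finally, I would note that $M$ together with $g$ is by construction an $n$-flat representation of $\C = \mathfrak{Cm}\At\A$: the unit is $1^M$, the representing function is $g$, $M = \bigcup_{\bar s}\rng(\bar s)$ is inherited from the representation of $\A$, and $n$-flatness is a property of $M$ and the clique-guarded semantics over it, not of the particular subalgebra being represented, so it transfers verbatim. Since $\A$ is a subalgebra of $\C$ and $g\!\restriction\!\A = f$, the representation $g$ extends $f$, completing the argument. I would keep the write-up short, citing the description of $\C$ from the proof of Theorem~\ref{can} and the properties of complete representations recorded before Theorem~\ref{complete}, and spelling out only the cylindrifier case in detail.
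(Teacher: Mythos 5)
Your proposal is correct and follows essentially the same route as the paper: the paper's proof simply defines $g(b)=\bigcup_{x\in X}f(x)$ for $b=\sum X$ with $X\subseteq\A$ (equivalently, the union of $f$ over the atoms below $b$) and asserts this yields an $n$-flat representation of $\mathfrak{Cm}\At\A$. Your write-up supplies the homomorphism verification (in particular the cylindrifier case via complete additivity and squareness) that the paper leaves implicit, but the underlying construction is identical.
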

\begin{proof} 
Let $f:\A\to \wp(V)$ be a representing function for $\A$ where $V\subseteq {}^nM$.
For $a\in \mathfrak{Cm}\At\A$, $a=\sum X$, say, with $X\subseteq \A$,
define $g(a)=\bigcup_{x\in X}f(x)$. Then $g:\mathfrak{Cm}\At\A\to \wp(V)$
is a complete $n$ flat representation of $\A$.
\end{proof}

\begin{lemma}\label{flat} If $\A\in \CA_m$ has an $n$ flat representation, 
then $\A\in S\Nr_m\CA_n.$ If $\A$ has an $n$ complete flat representation,
then $\A\in S_c\Nr_m\CA_n$. For any $n\geq m+3$, 
the class of algebras having 
$n$ complete flat representations is not 
elementary.
\end{lemma}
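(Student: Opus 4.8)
The plan is to treat the three claims separately: the first two by the familiar ``clique-guarded'' construction of a $\CA_n$ from an $n$-flat representation, and the third by reduction to Theorem~\ref{rainbow}. For the first claim, suppose $M$ is an $n$-flat representation of $\A$ via $f:\A\to\wp(V)$ with $V\subseteq{}^mM$. I would form the algebra $\B$ with universe
\[\{\phi^M:\phi\in L(A)_n\},\qquad \phi^M=\{\bar a\in C^n(M):M\models_C\phi[\bar a]\},\]
with Boolean operations the concrete ones on subsets of $C^n(M)$, diagonals $\diag ij=(x_i=x_j)^M$, and $\cyl i(\phi^M)=(\exists x_i\phi)^M$. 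One checks that $\B$ is a well-defined $\CA_n$: all the axioms are immediate from the definition of clique-guarded semantics and the fact that $M$ is $n$-square (which ensures $\cyl i$ genuinely supplies all $i$-witnesses inside $C^n(M)$), except for commutativity $\cyl i\cyl j=\cyl j\cyl i$, which is exactly what $n$-flatness provides. Then $h(a)=\{\bar a\in C^n(M):(a_0,\dots,a_{m-1})\in f(a)\}$ defines an injective homomorphism $\A\to\B$; injectivity comes from that of $f$, and since such an element depends only on its first $m$ coordinates, $n$-squareness yields $\cyl i h(a)=h(a)$ for $m\le i<n$, so $h[\A]\subseteq\Nr_m\B$. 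Hence $\A\in S\Nr_m\CA_n$.

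For the second claim I would keep $\B$ and $h$ and show that if the representation is moreover complete then $h:\A\to\Nr_m\B$ is a complete embedding. Suppose not; then there are a nonzero $b\in\Nr_m\B$ and $X\subseteq\A$ with $\sum^\A X=1$ yet $b\cdot h(x)=0$ for all $x\in X$. Pick $\bar a\in b\subseteq C^n(M)$; as $\rng\bar a$ is a clique, $(a_0,\dots,a_{m-1})\in V$, and since the representation is complete $V=f(1)=\bigcup_{x\in X}f(x)$, so $(a_0,\dots,a_{m-1})\in f(x)$ for some $x\in X$, i.e.\ $\bar a\in h(x)$, contradicting $b\cdot h(x)=0$. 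Thus $\A\subseteq_c\Nr_m\B$, i.e.\ $\A\in S_c\Nr_m\CA_n$.

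For the third claim, fix $n\ge m+3$. Applying Theorem~\ref{rainbow} with $m$ in place of $n$ there gives an atomic $\C\in\PEA_m$ with countably many atoms such that $\Rd_{sc}\C\notin S_c\Nr_m\Sc_{m+3}$, together with a countable atomic $\B\equiv\C$ with $\B\in S_c\Nr_m\QEA_\omega$; by Theorem~\ref{completerepresentation}, $\Rd_{ca}\B$ is completely representable. A classical complete representation is in particular an $n$-complete flat representation (over a generalized space the clique-guarded semantics coincides with the ordinary one, and $n$-squareness and $n$-flatness then hold automatically), so $\Rd_{ca}\B$ has an $n$-complete flat representation. On the other hand $\Rd_{ca}\C$ does not: if it did, then by the second claim $\Rd_{ca}\C\in S_c\Nr_m\CA_n\subseteq S_c\Nr_m\CA_{m+3}$ (using $\Nr_m\CA_n\subseteq\Nr_m\CA_{m+3}$ for $n\ge m+3$, since $\Nr_m\D=\Nr_m(\Nr_{m+3}\D)$), and taking $\Sc$-reducts would give $\Rd_{sc}\C\in S_c\Nr_m\Sc_{m+3}$, contrary to the choice of $\C$. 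Since $\C\equiv\B$, the class of $\CA_m$'s admitting an $n$-complete flat representation is not closed under elementary equivalence, hence not elementary; the same argument works for any signature between $\Sc$ and $\PEA$.

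The step I expect to be the real work is the first one: carefully setting up $\B$ and verifying it is a $\CA_n$, isolating exactly where $n$-squareness and --- crucially --- $n$-flatness enter, and where a naive argument would fail. Once that machinery is available, the completeness check and the deduction of non-elementarity from Theorem~\ref{rainbow} are comparatively short.
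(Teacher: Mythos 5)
Your proposal is correct, and for the first and third claims it follows the paper's own route: the clique-guarded term algebra $\{\phi^M:\phi\in L(A)_n\}$ (a $\sf G_n$ whose cylindrifiers commute precisely because of $n$-flatness) for the neat embedding, and the pair $\C\not\equiv$-invariantly in $S_c\Nr_m\Sc_{m+3}$ versus its completely representable elementary equivalent $\B$ from Theorem~\ref{rainbow} for non-elementarity (you spell out the reduction from general $n\geq m+3$ to $m+3$ via $\Nr_m\D=\Nr_m\Nr_{m+3}\D$, which the paper leaves implicit).

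Where you genuinely diverge is the second claim. The paper passes to the larger algebra of $L(A)^n_{\infty,\omega}$-definable sets, proves that algebra is atomic (each nonzero element contains a ``complete type'' $\tau^M$), and then verifies completeness of the embedding by the atomicity criterion: every nonzero element meets the image of some atom of $\A$. You instead stay with the first-order algebra and argue directly from the definition of $\subseteq_c$: if $\sum^{\A}X=1$ and some nonzero $b\subseteq C^n(M)$ were disjoint from all $h(x)$, then the first $m$ coordinates of any $\bar a\in b$ lie in $V=f(1)=\bigcup_{x\in X}f(x)$ (using that a complete representation preserves the existing sum $\sum X=1$), forcing $\bar a\in h(x)$ for some $x\in X$. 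This is a legitimate and in fact leaner argument: it needs neither the infinitary expansion nor the atomicity of either algebra, and it establishes the stronger fact that no nonzero element of the whole of $\B$ (not just of $\Nr_m\B$) is disjoint from $h[X]$. The paper's detour through the atomic infinitary algebra buys explicit atoms of the target algebra, which it reuses elsewhere, but for the bare statement of the lemma your direct verification suffices.
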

\begin{proof}
Let $L(A)$ denotes the $\L_m$ signature that contains
an $m$ ary predicate for every $a\in A$.
Let $M$ be an $n$ flat representation.
For $\phi\in L(A)^n$,
let $\phi^{M}=\{\bar{a}\in C^n(M):M\models_C \phi(\bar{a})\}$. Here $\models_C$ is the $n$ dimensional clique guarded semantics.
Let $\D$ be the algebra with universe $\{\phi^{M}: \phi\in L(A)^n\}$ with usual
Boolean operations, cylindrifiers and diagonal elements \cite[Theorem 13.20]{HHbook}.

Certainly $\D$ is a subalgebra of the $\sf Crs_n$ (the class
of algebras whose units are arbitrary sets of $n$ ary sequences)
with domain $\wp(C^n(M))$ so $\D\in {\sf Crs_n}$. The unit $C^n(M)$ of $\D$ is symmetric,
closed under substitutions, so
$\D\in {\sf G}_n$ (these are relativized set algebras whose units are locally cube, they
are closed under substitutions.)
Because the representation is $n$ flat, quantifier commute, so cylindrifiers commute,
hence we have $\D\in \CA_n$. Now define the map $\theta: \A\to \D$ by $\theta(r)=r(\bar{x})^{M}$.
Preservation of operations is straightforward.  We show that $\theta$ is injective.
Let $r\in A$ be non-zero. But $M$ is a relativized representation, so there $\bar{a}\in M$
with $r(\bar{a})$ hence $\bar{a}$ is a clique in $M$,
and so $M\models r(\bar{x})(\bar{a})$, and $\bar{a}\in \theta(r)$ proving the required.

Let $L(A)_{\infty, \omega}^n$ be the expansion of $L(A)$ by infinite conjunctions. 
Let $\D=\{\phi^M: \phi\in L(A)_{\infty, \omega}^n\}$, that is 
$\D$ consists of all sets definable by infinitary $n$ variable formulas in the clique relativized semantics (we allow infinite disjunctions). 
Then as before $\D\in \CA_n$, but in this case we claim that $\D$ is also atomic. 
Let $\phi^M\in \D\sim \{0\}$.  Let $\bar{a}\in \phi^C$ and let $\tau=\bigwedge \{\psi\in L(A)_{\infty,\omega}^n: M\models \psi(a)\}$.
Then $\tau^{M}$ is an atom. Since $\A$ has a relativized complete representation then $\A$ is atomic.
We show that $\theta$ is a complete embedding.
Let $\phi\in L(A)_{\infty, \omega}^n$ 
be such that $\phi^M\neq 0$. Let $\bar{a}\in \phi^M$. Since $M$ is a complete relativized representation
and $a\in C^n(M)$, then there is an atom $\alpha$ of $\A$ such
that $M\models \alpha(\bar{a})$, hence $\theta(\alpha)\cdot \phi^M\neq 0$ 
and we are done.

The last part follows from theorem \ref{rainbow} since $\A=\Rd_{ca}\sf PEA_{\N^{-1}, \N}\notin S_c\Nr_m\CA_{m+3}$ but
$\A$ is elementary equivalent to a completely representable algebra $\B$, 
and so $\Rd_{ca}\B\in S_c\Nr_m\sf CA_{\omega}$.
\end{proof}

Results in algebraic logic are most attractive when they have direct new impact on variants
of first order logic. We show that our previously proved algebraic results, theorems \ref{can}, \ref{rainbow} are such.

We consider omiting types theorems for $\L_m$ a task implemented also in \cite{ANT}, 
but our approach is different because we move away from classical 
Tarskian semantics. 
We show that when we broaden considerably the class of allowed models,
permitting  the so called $m+3$ flat ones, there still might not be countable models omitting a single non-principal
type. Furthermore, this single-non principal type can be chosen to be the set of co-atoms in an atomic
theory. In the next theorem by $T\models \phi$ 
we mean that $\phi$ is valid in any (classical) model of $T$.

Let $T$ be a consistent  $L_m$ theory, and $m>n$. A model $M$ of $T$ is $n$ flat it  is the base of an $n$ flat relativized
representation of the Tarski-Lindenbaum algebra
$\Fm_T$, that is there exists $V\subseteq {}^mM$, where $M$ is the smallest such set,
and an injective homomorphism $f:\Fm_T\to \wp(V)$.

If $T$ is countable then an $\omega$ flat model is an ordinary model, and if it is consistent, then
it has an $n$ flat model for every finite $n\geq m$.
A type $\Gamma$ is isolated if there exists $\phi\in \L_m$ such that $T\models \phi\to \alpha$ for all $\alpha\in \Gamma$; otherwise it
is non-principal.

\begin{theorem}\label{OTT} There is a countable consistent $\L_m$ theory $T$
with a non-principal type $\Gamma$ that cannot be omitted in any $m+3$ flat model of $T.$
\end{theorem}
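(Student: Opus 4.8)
The plan is to translate the algebraic non-atom-canonicity result of Theorem~\ref{can} into a failure of omitting types for $\L_m$ under $m+3$-flat semantics. First I would take the countable representable atomic algebra $\A\in\PEA_m$ produced by Theorem~\ref{can}, whose $\Sc$-reduct of the \d\ completion $\Cm\At\A$ is not in $S\Nr_m\Sc_{m+3}$, and pass to its $\CA$-reduct, which I continue to call $\A$. Since $\A$ is countable, atomic, and representable (hence simple, as shown in the proof of Theorem~\ref{can}), I can realize it as a Tarski--Lindenbaum algebra: fix a countable signature and a consistent complete $\L_m$ theory $T$ with $\Fm_T\cong\A$, so that the atoms of $\A$ correspond to a family of $\L_m$-formulas and the co-atoms (complements of atoms) form a set $\Gamma\subseteq\Fm_T$ of formulas. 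Because $\A$ is atomic but, as established in the proof of Theorem~\ref{can}, $\Rd_{df}\A$ is not completely representable --- equivalently $\A$ has no complete representation in the classical sense --- $\Gamma$ is a non-principal type of $T$: if some $\phi$ isolated $\Gamma$ then $\phi^{\A}$ would be a nonzero element below no atom, contradicting atomicity.

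Next I would argue that $\Gamma$ cannot be omitted in any $m+3$-flat model of $T$. Suppose $M$ is an $m+3$-flat model of $T$ that omits $\Gamma$; then the associated representing function $f:\Fm_T\to\wp(V)$, $V\subseteq{}^mM$, gives an $(m+3)$-flat representation of $\A$ which, because $\Gamma$ is omitted, is moreover a \emph{complete} $(m+3)$-flat representation in the sense discussed before Lemma~\ref{complete}: every $m$-clique $\bar a$ satisfies some atom of $\A$, since the formulas in $\Gamma$ are exactly the co-atoms and no point in the clique-guarded semantics can satisfy all of them. Now invoke Lemma~\ref{flat}: having an $n$-complete flat representation with $n=m+3$ forces $\A\in S_c\Nr_m\CA_{m+3}$, hence a fortiori $\Rd_{sc}\A\in S\Nr_m\Sc_{m+3}$. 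But applying Lemma~\ref{complete} (the clique-guarded analogue for \d\ completions of countable atomic algebras), the complete $(m+3)$-flat representation of $\A$ lifts to an $(m+3)$-flat representation of $\Cm\At\A$, so by the first part of Lemma~\ref{flat} we get $\Cm\At\A\in S\Nr_m\CA_{m+3}$, and thus $\Rd_{sc}\Cm\At\A\in S\Nr_m\Sc_{m+3}$, directly contradicting the conclusion of Theorem~\ref{can} that $\Rd_{sc}\Cm\At\A\notin S\Nr_m\Sc_{m+3}$. This contradiction shows no such $M$ exists.

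The one point requiring care --- and the main obstacle --- is the bookkeeping that matches up the metalogical notion ``$M$ omits $\Gamma$'' with the algebraic notion ``the induced $(m+3)$-flat representation of $\Fm_T$ is complete,'' keeping track of the clique-guarded semantics throughout. Concretely, I must check that for $\bar a\in C^m(M)$ the set $f^{-1}(\bar a)=\{b\in A: \bar a\in f(b)\}$ is an ultrafilter of $\A$ whose failure to be principal is exactly witnessed by $M\models_C\gamma[\bar a]$ for every $\gamma\in\Gamma$; omitting $\Gamma$ rules this out, so $f^{-1}(\bar a)$ is principal, generated by an atom, and then $f(\sum X)=\bigcup_{x\in X}f(x)$ follows as in Lemma~\ref{complete}. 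I would also note explicitly that $T$ may be taken complete and that its consistency (needed for $\Fm_T\cong\A$ with $|A|>1$) is immediate since $\A$ is a nontrivial representable algebra. With these identifications in place the argument is a direct chain of the cited lemmas, and the proof concludes by the contradiction above.
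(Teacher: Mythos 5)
Your argument is correct and coincides with the paper's own first proof: take the term algebra of Theorem~\ref{can} as $\Fm_T$, let $\Gamma$ be the co-atoms (non-principal by atomicity), and observe that an $m+3$-flat model omitting $\Gamma$ would yield a complete $m+3$-flat representation of $\A$, hence via Lemma~\ref{complete} an $m+3$-flat representation of $\Cm\At\A$, contradicting Theorem~\ref{can} together with Lemma~\ref{flat}. The paper additionally records a second proof of the same statement using the algebra of Theorem~\ref{rainbow} directly, but your route is exactly its first one.
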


\begin{proof} We give two different proofs relying on theorems \ref{can} and \ref{rainbow}, respectively.
\begin{enumarab}

\item  Let $\A$ be the term algebra as in theorem \ref{can}.
Then $\A$ is countable and atomic. Let $\Gamma'$ be the set of  co-atoms.
We can assume that $\A$ is simple \cite{Hodkinson}.
Then $\A=\Fm_T$ for some countable consistent $\L_m$ complete theory $T$.
Then $\Gamma=\{\phi: \phi_T\in \Gamma'\}$ is not principal.
If it can be omitted in an $m+3$ flat model $\M$, then
this gives an $m+3$ complete flat representation of $\A=\Fm_T$,
which gives
an $m+3$ flat representation of $\mathfrak{Cm}\At\A$, which we know, by theorems \ref{can} 
and  \ref{flat},
does not exist.

\item Now we use theorem \ref{rainbow}. 
Let $\A$ be the term algebra of the atom structure of that $\sf CA_m$ $\C$ constructed in theorem \ref{rainbow}. 
Then $\A\notin S_c\Nr_n\CA_{m+3}$.
Let $T$ be such that $\A\cong \Fm_T$ and let $\Gamma$ be the set of co-atoms.
Then $\Gamma$ cannot be omitted in an $m+3$ flat $\M$,
for else, this gives a complete $m+3$ flat representation of $\A$, which means that
$\A\in S_c\Nr_m\CA_{m+3}$ by theorem \ref{flat}.
\end{enumarab}
\end{proof}

\subsection{A stronger result on omitting types}

For finite $m>2$ and $k\geq m$, we introduce a new variety
of cylindric algebras ${\sf CAB}_{m,k}$  of dimension $m$ satisfying
$S\Nr_m\CA_{m+k}\subseteq {\sf CAB}_{m,k}\subseteq \sf RCA_m$ and $\bigcap_{k\geq m}{\sf CAB}_{m,k}=\RCA_m.$
Every $\A\in {\sf CAB}_{k,m}$ has a $k$ dimensional basis that is determined by an $\omega$ rounded game
using $k$ pebbles where \pa\ has only a cylindrifier move.
We show that for any finite $m>2$ and any
$\sf L$ such that ${\sf CAB}_{m,m+3}\subseteq \sf L\subseteq \RCA_m$,
$\sf L$ is not atom-canonical. We use this result to show that $OTT$ fails for $\L_m$ ($m$ finite $>2$)
even if we dispense with the condition of $m$ flatness and require 
only $m$ squareness.
It can be also proved that $n\geq m+3$, $S\Nr_m\CA_{n}$ is not finitely axiomatizable
over ${\sf CAB}_{m,n}$ and that ${\sf CAB}_{m, n+1}$ is not finitely axiomatizable
over ${\sf CAB}_{m,n}$, but we defer this to another paper. 
The results in this subsection apply equally well to any $\sf K$ between $\Sc$ and $\PEA$. We formulate and prove our results just
for $\CA$s.

We recall the definition of networks for $\CA$s obtained by deleting the clause tha has to to with substitutions corresponding 
to transpositions in $\PEA$s.

\begin{definition}\label{network}
Let $2< m<\omega.$ Let $\C$ be an atomic ${\sf CA}_{m}$.
An \emph{atomic  network} over $\C$ is a map
$$N: {}^{m}\Delta\to \At\C,$$
where $\Delta$ is a non-empty set called a set of nodes,
such that the following hold for each $i,j<n$, $\delta\in {}^{m}\Delta$
and $d\in \Delta$:
\begin{itemize}
\item $N(\delta^i_j)\leq {\sf d}_{ij}$
\item $N(\delta[i\to d])\leq {\sf c}_iN(\delta)$
\end{itemize}
\end{definition}

\begin{definition}\label{basis} An {\it $n$ dimensional basis of an algebra in $\A\in \CA_m$}
is a set of $n$ dimensional $\CA$ networks that satisfy only

\begin{enumarab}
\item For all $a\in \At\A$, there is an $N\in H$ such that $N(0,1,\ldots, m-1)=a.$
\item For all $N\in H$ all $\bar{x}\in {}^n\nodes(N)$, for all $i<m$ for all $a\in\At\A$ such that
$N(\bar{x})\leq {\sf c}_ia$, there exists $\bar{y}\equiv_i \bar{x}$ such that $N(\bar{y})=a.$
\end{enumarab}

\end{definition}

We define a game on an atomic $\A\in \CA_m$s that characterizes algebras with $n$ dimensional basis.
Let $2<m<n$. Let $r\leq \omega$, the game $G_r^n(\A)$ is played over atomic $\A$ networks
between \pa\ and \pe\ and has $r$ rounds, and $n$ pebbles,
where $r,n\leq \omega.$

\begin{enumerate}
\item In round $0,$ \pa\ plays an atom $a\in \At\A$. \pe\ must respond with a network $N_a$ with set of nodes $\bar{x}$ such that $N_a(\bar{x})=a$.
\item In any round $t$ with $0<t<r$, assume that the current network is $N_{t-1}.$
Then \pa\ plays as follows.
First if $|N_{t-1}|=n$ then he deletes a node $z\in N_{t-1}$ and defines $N_t$ to be the resulting network.
Otherwise, he lets $N=N_{t-1}$; now \pa\ chooses $\bar{x}\in {}^nN$, $i<m$ and an atom $a\in \At\A$,
such that $N(\bar{x})\leq {\sf c}_ia$. \pe\ must respond to this cylindrifier move
by providing a network $N_t\supseteq N$, with $|N_t|\leq n$, having a node $z$
such
and $N_t(\bar{y})=a$, where $\bar{y}\equiv_i \bar{x}$ and $\bar{y}(i)=z$.
\end{enumerate}

The number of pebbles $n$ measures the squareness, flatness of the relativized representations; these are
significantly distinct notions
in relativized semantics, though the distinction between the 
first two diffuses at the limit of genuine representations.

Crudely, the classical
case then becomes a limiting case, when $n$ goes to infinity.
However, this is not completely accurate because $\omega$ complete relativized representations
may not coincide with classical ones on uncountable algebras. Indeed the uncountable algebra proved in theorem \ref{complete} not have a complete
representation  can be easily proved to have an $\omega$ relativized complete representation using that \pe\ 
can win the $\omega$ rounded atomic game.
\begin{theorem}\label{gamebasis}
Let $2<m<n$. Then following are equivalent for an atomic $\A\in \sf CA_m$
\begin{enumarab}
\item $\A$ has an $n$ dimensional bases.
\item \pe\ has a \ws\ in $G_{\omega}^n(\A).$
\end{enumarab}
\end{theorem}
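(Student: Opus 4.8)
**Proof plan for Theorem \ref{gamebasis} (equivalence of $n$-dimensional bases and a winning strategy for \pe\ in $G_\omega^n(\A)$).**

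The plan is to prove the two implications separately, in each case exploiting the close resemblance between the clauses defining an $n$-dimensional basis (Definition \ref{basis}) and the structure of a play of $G_\omega^n(\A)$. For the direction (1)$\Rightarrow$(2), I would assume $H$ is an $n$-dimensional basis of $\A$ and describe a strategy for \pe: she maintains the invariant that after every round the current network $N_t$ belongs to $H$ (or embeds into some member of $H$ via a label-preserving, node-injective map). In round $0$, \pa\ plays an atom $a$; by clause (1) of Definition \ref{basis} there is $N_a\in H$ with $N_a(0,1,\dots,m-1)=a$, and \pe\ plays $N_a$. In a cylindrifier round, \pa\ picks $N=N_{t-1}\in H$ (possibly after a deletion move, which keeps us inside $H$ only after re-identifying the resulting network with a sub-network — here I would either close $H$ under subnetworks from the start or, more carefully, argue the deletion poses no problem since \pa's subsequent moves can always be answered), a tuple $\bar x\in{}^n N$, an index $i<m$ and an atom $a$ with $N(\bar x)\le{\sf c}_i a$. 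Clause (2) of Definition \ref{basis} hands \pe\ exactly the required $\bar y\equiv_i\bar x$ with $N(\bar y)=a$ inside some $N'\in H$ with $|N'|\le n$; she plays $N'$. Since $H$ is closed under the operations implicit in the game, \pe\ survives every round, giving a \ws\ in $G_\omega^n(\A)$.

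For the direction (2)$\Rightarrow$(1), I would start from a \ws\ $\sigma$ for \pe\ in $G_\omega^n(\A)$ and build a basis $H$ as the set of all networks $N$ (with $|N|\le n$) that can arise as $N_t$ for some round $t$ in some $\sigma$-play. The key point is a standard ``scheduling'' argument: because \pa\ has unboundedly many rounds and only $n$ pebbles which he may re-use (deleting and re-introducing nodes), he can, over an $\omega$-length play, present every ``demand'' — i.e.\ every choice of $N\in H$, $\bar x\in{}^n N$, $i<m$, atom $a$ with $N(\bar x)\le{\sf c}_i a$, and every atom $b$ to start with — at some round. Hence the set $H$ so obtained satisfies clauses (1) and (2) of Definition \ref{basis}: clause (1) because \pa's opening atom moves exhaust $\At\A$, and clause (2) because whenever $N\in H$ and $N(\bar x)\le{\sf c}_i a$, \pa\ can make exactly that cylindrifier demand in a suitable $\sigma$-play extending the one that produced $N$, and \pe's response (dictated by $\sigma$) is a network $N'\in H$ witnessing clause (2). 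One must also check that each $N\in H$ really is an atomic network over $\C=\A$ in the sense of Definition \ref{network}, i.e.\ that $N(\delta^i_j)\le{\sf d}_{ij}$ and $N(\delta[i\to d])\le{\sf c}_i N(\delta)$; this follows from the fact that the networks \pe\ plays in the game are, by the rules, atomic $\A$-networks to begin with.

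The main obstacle — and the point deserving the most care — is the bookkeeping around \pa's deletion moves and the re-use of the $n$ pebbles: one must be sure that ``every demand gets scheduled'' genuinely holds, i.e.\ that for any $N\in H$ reached by \pe's strategy and any legal cylindrifier demand on $N$, there is a $\sigma$-play in which \pa\ first re-creates (a copy of) $N$ and then issues that demand. This is where the node-relabelling/embedding flexibility of networks is used: since $|N|\le n$ and \pa\ has $n$ pebbles, he can reconstruct an isomorphic copy of $N$ by a finite sequence of moves, and \pe's strategy, depending only on the isomorphism type of the network built so far, responds consistently; so $H$ is well-defined up to isomorphism and closed under the relevant operations. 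Modulo this scheduling lemma — which is the $\CA$-analogue of the argument already invoked in \cite{r} and in the proof of theorem \ref{thm:n} — the rest is routine verification of the clauses of Definition \ref{basis} and Definition \ref{network}.
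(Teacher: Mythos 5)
Your proposal is correct and is essentially the argument the paper itself defers to: the paper offers no details beyond "similar to \cite[Proposition 12.25]{HHbook}", and your two directions (guiding \pe\ by staying inside the basis, and conversely taking $H$ to be the networks occurring in plays according to a \ws, verified by branching the play at the point where a given demand is to be issued) are precisely the content of that proposition transposed to $\CA_m$. The one point worth tightening is that in (2)$\Rightarrow$(1) you do not actually need a single play that schedules all demands — since $H$ ranges over \emph{all} $\sigma$-plays, each demand on a network $N\in H$ is handled by the play that follows the one producing $N$ and then issues exactly that demand (modulo the deletion-move bookkeeping you already flag, which is the same fiddly point as in the relation-algebra case).
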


\begin{proof} The proof is similar to \cite[proposition 12.25]{HHbook}.
\end{proof}

\begin{definition} Let  $2<m<n$ and suppose that $m$ is finite.
We define $\A\in {\sf CAB_n}$ if $\A$ is a subalgebra of a complete atomic $\B\in \CA_m$
that has an $n$ dimensional basis.
\end{definition}

\begin{theorem}\label{cann} Fix a finite  dimension $m>2,$ and let $n\in \omega$ with $n\geq m$.
\begin{enumarab}
\item ${\sf CAB}_{m,n}$ is a canonical variety,
\item If $\A\in \CA_m$ then $\A\in {\sf CAB}_{m,n}$ iff $\A^+$ has an $n$ dimensional basis,
\item If $\A\in \CA_m$, then $\A\in {\sf CAB}_{m,n}$ iff $\A$ has
an $n$ square relativized representation,
\item $\bigcap_{k\in \omega, k\geq m} {\sf CAB}_{m, k}=\sf RCA_m.$
\end{enumarab}
\end{theorem}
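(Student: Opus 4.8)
The statement to prove is Theorem~\ref{cann}, characterizing the varieties ${\sf CAB}_{m,n}$. The plan is to treat the four items essentially as consequences of the game characterization in Theorem~\ref{gamebasis} together with standard canonicity machinery for Boolean algebras with operators.

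\textbf{Items (1) and (2).} First I would prove (2) and then derive (1) from it. For (2): by definition $\A\in{\sf CAB}_{m,n}$ iff $\A$ embeds into a complete atomic $\B\in\CA_m$ with an $n$ dimensional basis. Since $\A^+=\Cm(\Uf\A)$ is complete and atomic and $\A$ embeds into $\A^+$, the right-to-left direction is immediate. For left-to-right, suppose $\A\subseteq\B$ with $\B$ complete atomic having an $n$ dimensional basis; I would show $\A^+$ also has one. The cleanest route is via Theorem~\ref{gamebasis}: $\B$ having an $n$ dimensional basis is equivalent to \pe\ having a \ws\ in $G^n_\omega(\B)$, a game on atomic networks. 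Because $\A^+$ and $\B$ are both complete atomic and $\A$ is dense in each, one shows $\At\A^+$ and $\At\B$ are ``close enough'' — more precisely, every atom of $\A^+$ lies below some atom of $\B$ after passing to a suitable ultrafilter extension, and networks over $\B$ restrict to networks over $\A^+$; \pe\ can transport her winning strategy. This gives (2). Then (1), canonicity, follows: ${\sf CAB}_{m,n}$ is defined as $S\{\B:\B \text{ complete atomic with }n\text{-dim basis}\}$, and I would argue it is closed under $\bold H,\bold S,\bold P$ (so a variety — closure under $\bold S$ is built in, products of algebras with bases have bases coordinatewise, homomorphic images are handled by the usual BAO argument) and that $\A\in{\sf CAB}_{m,n}\Rightarrow\A^+\in{\sf CAB}_{m,n}$, which is exactly (2). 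Canonicity of a variety is precisely closure under canonical extensions.

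\textbf{Item (3).} Here I would connect $n$ dimensional bases to $n$ square relativized representations. Given an $n$ dimensional basis $H$ of (a complete atomic algebra containing) $\A$, build a relativized representation by a step-by-step / chase construction: start with any network in $H$ realizing a given atom, and repeatedly use clause (2) of Definition~\ref{basis} to witness cylindrifiers, re-using the $n$ pebbles; the union/limit gives a set $M$ and $V\subseteq{}^mM$ of cliques, and the natural map $\A\to\wp(V)$ is an injective homomorphism that is $n$ square by construction (squareness is exactly the property that cylindrifier-witnesses can be found within $n$-cliques, matching the pebble bound). Conversely, from an $n$ square relativized representation $M$ one reads off an $n$ dimensional basis by taking, for each clique-tuple, the network assigning to each $m$-subtuple the unique atom it satisfies; $n$ squareness of $M$ yields clause (2). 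This mirrors \cite[Theorem 13.20]{HHbook} and Lemma~\ref{flat} above, and I would cite those for the routine verifications.

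\textbf{Item (4).} For $\bigcap_{k\geq m}{\sf CAB}_{m,k}=\RCA_m$: the inclusion $S\Nr_m\CA_{m+k}\subseteq{\sf CAB}_{m,k}$ (for which an algebra neatly embeddable into $m+k$ dimensions has an $(m+k)$ dimensional basis via its neat reduct structure — atoms and cylindrifier-witnesses come from the larger algebra) gives $\bigcap_k{\sf CAB}_{m,k}\supseteq\bigcap_k S\Nr_m\CA_{m+k}=\RCA_m$, the last equality being the neat embedding theorem. For the reverse, if $\A\in{\sf CAB}_{m,k}$ for all finite $k\geq m$, then $\A$ has an $n$ square relativized representation for every $n$ by (3); a compactness/ultraproduct argument (or directly: a chain of increasingly square representations stabilizes) produces a genuine ($\omega$ square, i.e. classical) representation, so $\A\in\RCA_m$. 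The main obstacle I expect is item (2)/the canonicity argument: carefully justifying that \pe's winning strategy in the basis game transfers to the canonical extension requires a delicate analysis of how atoms and networks behave under ultrafilter extensions, since $\A^+$ may have many more atoms than $\A$ and one must check the basis conditions survive. The other items are comparatively routine adaptations of known step-by-step and neat-embedding arguments. For the detailed verifications throughout I would lean on \cite[Chapters 12--13]{HHbook}, noting the proofs there adapt verbatim to the $\CA$ setting and, by the remark opening this subsection, to any class between $\Sc$ and $\PEA$.
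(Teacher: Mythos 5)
Your overall architecture follows the paper's (which defers most verifications to the corresponding relation-algebra results in Hirsch--Hodkinson), but two of your steps do not go through as written. First, in item (2) you transfer \pe 's strategy from $\B$ to $\A^+$ "because $\A$ is dense in each"; but $\A$ is \emph{not} dense in its canonical extension $\A^+$ unless $\A$ is finite: a singleton $\{\mu\}$ with $\mu$ a non-principal ultrafilter is a nonzero element of $\A^+$ with no nonzero element of $\A$ below it. The actual argument, as in \cite[Proposition 13.37]{HHbook}, has to build a basis for $\A^+$ directly out of \pe 's winning strategy using saturation properties of ultrafilters; density is simply not available here, and this is precisely why that proposition is nontrivial.

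Second, and more seriously, your argument for the inclusion $\bigcap_{k}{\sf CAB}_{m,k}\subseteq\RCA_m$ in item (4) --- "a compactness/ultraproduct argument (or a chain of increasingly square representations stabilizes) produces a genuine representation" --- fails for uncountable algebras. An ultraproduct or limit of $k$-square relativized representations yields only an $\omega$-square \emph{relativized} representation, and the paper itself points out (in the discussion preceding theorem \ref{gamebasis}, via the uncountable algebra of theorem \ref{complete}) that an uncountable algebra can have an $\omega$ relativized, even complete, representation without having a classical one; nor is there any reason for a chain of increasingly square representations to stabilize. The paper's proof sidesteps exactly this obstacle: it first observes that the two classes coincide on \emph{simple countable} algebras (where a step-by-step argument does turn $k$-squareness for all $k$ into a classical representation), and then uses that both $\bigcap_{k}{\sf CAB}_{m,k}$ and $\RCA_m$ are discriminator varieties, hence determined by their countable simple members, to conclude they are equal. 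Some such reduction to the countable simple case is needed; the direct compactness route does not close the argument.
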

\begin{proof}
\begin{enumarab}
\item  The proof is similar to that of \cite[proposition 12.31]{HHbook2} 

\item One side is trivial. The other side is similar to the proof of \cite[Proposition 13.37]{HHbook}. 

\item One side is easy.
Let $M$ be the given representation.  For a network $N$ and $v: N\to M$, we say that
$v$ embeds $N$ in $M$ if for all $r\in \At\A$,
we have $N(\bar{x})=r$ iff $M\models r(v(\bar{x}))$.

All such networks that embed in $M$ are the desired basis. The other more involved part is also similar to \cite[Proposition 13.37]{HHbook}.

\item Since a classical  representation is $\omega$ square, then $\bigcap_{k\geq m}{\sf CAB}_{m,k}$
and $\RCA_m$ coincide on simple countable algebras.
But each is a discriminator variety and so
they are equal.
\end{enumarab}
\end{proof}
The next theorem (that we state without proof) 
says that the class of algebras having $n$ flat relativized representations is not finitely axiomatizable over that having
$n$ square representations. This means that commutativity of cylindrifiers adds a lot.

\begin{theorem}\label{squarerepresentation}
\begin{enumarab}
\item  For $m\geq 3$ and $k\in \omega\sim \{0\}$, $S\Nr_m\CA_{m+k}\subseteq {\sf CAB}_{m, m+k}$
\item For $3\leq m\leq n<\omega$, $S\Nr_m\CA_{n+1}$ is not finitely axiomatizable over ${\sf CAB}_{m, n+1}$
\end{enumarab}
\end{theorem}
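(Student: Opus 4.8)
The plan is to push $\A$ into a complete, atomic algebra of the right dimension and then extract an $(m+k)$-dimensional basis from the games already available. So assume $\A\subseteq\Nr_m\D$ with $\D\in\CA_{m+k}$, let $\D^{+}$ be the canonical extension of $\D$ (a complete, atomic and completely additive $\CA_{m+k}$) and set $\B_0=\Nr_m\D^{+}$. Since $\D$ embeds into $\D^{+}$ commuting with the cylindrifiers, $\A\subseteq\Nr_m\D\subseteq\Nr_m\D^{+}=\B_0$. Complete additivity of $\D^{+}$ makes neat reducts closed under its arbitrary sums, so $\B_0$ is a complete subalgebra of $\D^{+}$; being a complete subalgebra of a complete atomic Boolean algebra with operators, $\B_0$ is itself a complete, atomic $\CA_m$. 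As $\B_0$ is literally a neat reduct, $\B_0\in S_c\Nr_m\CA_{m+k}$, so theorem~\ref{thm:n} (with its ``$n,m$'' read as our ``$m,m+k$'') gives \pe\ a \ws\ in $F^{m+k}(\At\B_0)$. But $F^{m+k}$ is the $(m+k)$-pebble $\omega$-round cylindrifier game $G^{m+k}_{\omega}$, so by theorem~\ref{gamebasis} $\B_0$ has an $(m+k)$-dimensional basis; as $\B_0$ is complete and atomic and $\A$ is a subalgebra of it, $\A\in{\sf CAB}_{m,m+k}$ by the very definition of that class. (No canonicity of $S\Nr_m\CA_{m+k}$ is needed.)

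\textbf{Part (2).} For the non-finite-axiomatizability I would use the usual ``ultraproduct of bad algebras'' scheme. Fix $3\le m\le n<\omega$; both $S\Nr_m\CA_{n+1}$ and ${\sf CAB}_{m,n+1}$ are varieties, and by (1) the former is contained in the latter. It suffices to construct finite $\CA_m$s $\A_k$ $(k\in\omega)$ and a non-principal ultrafilter $U$ on $\omega$ with (a) $\A_k\in{\sf CAB}_{m,n+1}$, (b) $\A_k\notin S\Nr_m\CA_{n+1}$, and (c) $\prod_{k/U}\A_k\in S\Nr_m\CA_{n+1}$: then a single first order sentence $\sigma$ cutting $S\Nr_m\CA_{n+1}$ out of ${\sf CAB}_{m,n+1}$ would be false in every $\A_k$ but, by \Los' theorem, true in $\prod_{k/U}\A_k$, a contradiction. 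I would take $\A_k$ to be the $\CA_m$-reduct of a finite rainbow algebra on greens $\G_k$ and reds $\R_k$, with the parameters tuned so that: (i) the reds are plentiful enough that \pe\ can colour every edge a cylindrifier move forces inside an $(n+1)$-clique of coloured graphs, so \pe\ wins the plain $(n+1)$-pebble $\omega$-round cylindrifier game on $\At\A_k$; by theorem~\ref{gamebasis} the finite algebra $\A_k$ then has an $(n+1)$-dimensional basis and is therefore in ${\sf CAB}_{m,n+1}$, giving (a); while (ii) the greens, played against the reds as in theorem~\ref{can}, let \pa\ force an inconsistent red triangle, so \pa\ wins the finer $(n+1)$-dimensional \emph{hyperbasis} game $H^{n+1}$ on $\A_k$ in finitely many rounds; since, by the $H^{n+1}$-analogue of theorem~\ref{thm:n}, a finite member of $S\Nr_m\CA_{n+1}$ would give \pe\ a \ws\ in $H^{n+1}$, this yields (b). The index $k$ is there for a quantitative reason: with the parameters chosen correctly \pa\ needs $\nu(k)$ rounds of $H^{n+1}$ to beat \pe\ on $\A_k$, with $\nu(k)\to\infty$, while \pe\ survives $\nu(k)-1$ rounds.

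Part (c) I would obtain by running \pe's $(\nu(k)-1)$-round strategies coordinatewise in $\D=\prod_{k/U}\A_k$, which is atomic since atomicity is first order: at round $r$ her composite move is legal in every component with $\nu(k)>r$ --- a co-finite set --- hence legal in $\D$, so she survives all $\omega$ rounds of $H^{n+1}(\At\D)$, and a fair such play builds an $(n+1)$-flat representation of $\D$; then $\D\in S\Nr_m\CA_{n+1}$ by lemma~\ref{flat}. The real work, and the reason the paper defers this, lies in the machinery invoked in passing: the cylindric analogue of hyperlabels, hypernetworks and the hyperbasis game $H^{n+1}$ --- the $\CA$ counterpart of \cite[Ch.~13]{HHbook} and of the later games of \cite{r} --- together with the $H^{n+1}$-versions of theorem~\ref{thm:n} and of the step-by-step construction that turns an $\omega$-round winning strategy into an $(n+1)$-flat representation; and then the delicate choice of $\G_k$ and $\R_k$ making (i) and (ii) hold simultaneously with $\nu(k)\to\infty$, which is exactly the bookkeeping that quantitatively separates $(n+1)$-squareness from $(n+1)$-flatness. (Alternatively (c) can be routed through term algebras on the rainbow atom structures and their completely hyperbasic elementary equivalents, as in the proof of theorem~\ref{rainbow}.) The remaining steps --- \Los' theorem, and collapsing a finite axiomatization to one sentence --- are routine.
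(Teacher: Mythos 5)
A preliminary remark: the paper states theorem \ref{squarerepresentation} explicitly \emph{without proof} (it is introduced with ``that we state without proof'', and the surrounding discussion defers the non-finite-axiomatizability results ``to another paper''), so there is no proof of record to measure yours against; what follows assesses your argument on its own terms.

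Your part (1) is correct. The chain --- pass to the canonical extension $\D^{+}$; observe that $\B_0=\Nr_m\D^{+}$ is a complete subalgebra of $\D^{+}$ by complete additivity of the cylindrifiers there, hence a complete atomic $\CA_m$; apply theorem \ref{thm:n} to get a \ws\ for \pe\ in the bounded-node cylindrifier game; convert this into an $(m+k)$-dimensional basis via theorem \ref{gamebasis}; conclude $\A\in{\sf CAB}_{m,m+k}$ from $\A\subseteq\B_0$ --- is exactly what the definitions demand. The two small points worth spelling out are the identification of $F^{m+k}$ with $G^{m+k}_{\omega}$ (the node-reuse bookkeeping differs superficially between the two presentations) and the fact that a complete subalgebra of a complete atomic Boolean algebra is itself atomic; both are routine. (The alternative one-line route through theorem \ref{cann}(3) would need the unproved converse of lemma \ref{flat}, so your choice is the better one.)

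Part (2) has a genuine gap: the witnessing algebras are never constructed. Your scheme --- finite $\A_k\in{\sf CAB}_{m,n+1}\setminus S\Nr_m\CA_{n+1}$ with $\Pi_{k/U}\A_k\in S\Nr_m\CA_{n+1}$, followed by Lo\'{s}'s theorem and the routine collapse of a finite axiomatization to a single sentence --- is the right one, but every load-bearing step is asserted rather than proved. Concretely: (a) you need rainbow parameters for which \pe\ wins the full $\omega$-round $(n+1)$-pebble cylindrifier game on $\A_k$ (so that $\A_k$ has an $(n+1)$-dimensional basis) while \pa\ wins a strictly finer game detecting failure of membership in $S\Nr_m\CA_{n+1}$, in a number of rounds $\nu(k)$ tending to infinity. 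Nothing in the paper supplies such algebras: in the rainbow constructions of theorems \ref{can} and \ref{rainbow} it is the cylindrifier game itself that \pa\ wins, which would eject the algebra from ${\sf CAB}_{m,n+1}$ as well; arranging the two demands simultaneously, with the round count growing in $k$, is precisely the quantitative separation of $(n+1)$-squareness from $(n+1)$-flatness that constitutes the theorem. (b) The hyperbasis game $H^{n+1}$ for $\CA_m$, its analogue of theorem \ref{thm:n}, and --- most substantially --- the converse construction turning an $\omega$-round \ws\ for \pe\ in $H^{n+1}$ on the atomic ultraproduct into an $(n+1)$-flat representation (hence membership in $S\Nr_m\CA_{n+1}$) are all invoked but none is defined or proved in the paper. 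As written, part (2) is a programme rather than a proof --- which, in fairness, is also all the paper itself offers.
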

\begin{theorem}\label{OTT2}.
\begin{enumarab}
\item The atom structure $\At$ constructed in theorem \ref{can}
satisfies that  $\Rd_{ca}\Tm\At\in \sf RCA_m$, but $\Rd_{ca}\Cm\At\A\notin {\sf CAB}_{m,m+3}.$
\item Thew omitting types theorem fails if we allow $k+3$ square models.
\end{enumarab}
\end{theorem}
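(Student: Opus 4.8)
The plan is to leverage the two algebraic constructions already in hand (the blow-up-and-blur rainbow algebra of Theorem \ref{can} and the characterization of $n$-square relativized representations via $n$-dimensional bases in Theorem \ref{cann}) together with the game-theoretic machinery of Theorem \ref{thm:n} and the basis game of Theorem \ref{gamebasis}. For item (1), I would first recall that $\At$ in Theorem \ref{can} was obtained by blowing up and blurring the finite rainbow algebra ${\sf PEA}_{n+1,n}=\B$, and that $\Rd_{ca}\Tm\At$ is representable (this was established in the proof of Theorem \ref{can}, where the term algebra was shown to be in $\sf RPEA_n$, hence its $\CA$ reduct is in $\sf RCA_n$). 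The new content is the sharper non-membership statement $\Rd_{ca}\Cm\At\notin{\sf CAB}_{m,m+3}$. Since ${\sf CAB}_{m,m+3}$ is closed under subalgebras (being a variety, by Theorem \ref{cann}(1) it is canonical, hence in particular closed under $\bf S$ as every variety is) and since $\B$ (more precisely $\Rd_{ca}\B$) embeds into $\Rd_{ca}\Cm\At$ as shown in the proof of Theorem \ref{can} via the map $\Theta$, it suffices to show $\Rd_{ca}\B\notin{\sf CAB}_{m,m+3}$. Here $\B$ is finite, so by Theorem \ref{cann}(2) it lies in ${\sf CAB}_{m,m+3}$ iff $\B^+=\B$ has an $(m+3)$-dimensional basis, which by Theorem \ref{gamebasis} holds iff \pe\ has a winning strategy in $G_{\omega}^{m+3}(\At\B)$. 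But the proof of Theorem \ref{can} exhibited a winning strategy for \pa\ in $F^{n+3}$ on $\At({\sf PEA}_{n+1,n})$ in $n+2$ rounds, forcing an inconsistent triple of reds by bombarding \pe\ with $\alpha$-cones on a common base; the basis game $G_{\omega}^{m+3}$ is the same kind of cylindrifier-only game with $m+3=n+3$ nodes that \pa\ may re-use, so the identical strategy works here. Therefore \pe\ has no winning strategy in $G_{\omega}^{n+3}(\At\B)$, so $\B$ has no $(n+3)$-dimensional basis, so $\Rd_{ca}\B\notin{\sf CAB}_{m,m+3}$, and hence $\Rd_{ca}\Cm\At\notin{\sf CAB}_{m,m+3}$.

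For item (2), the strategy is to run the omitting-types argument of Theorem \ref{OTT} but with the weaker hypothesis of $m+3$-square (rather than $m+3$-flat) models, using the stronger algebraic fact just proved. First I would note, as in Theorem \ref{can}, that the term algebra $\A=\Tm\At$ is countable, atomic, and simple, so $\A\cong\Fm_T$ for some countable consistent complete $\L_m$ theory $T$; let $\Gamma$ be the set of co-atoms of $\A$, viewed as a set of $\L_m$-formulas, which is a non-principal type of $T$ (non-principal precisely because $\A$ is atomic with infinitely many atoms and hence $\sum$ of the co-atoms is not $1$ in any proper sense needed — more carefully, no single consistent formula implies all co-atoms). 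Suppose toward a contradiction that some countable $m+3$-square model $M$ of $T$ omits $\Gamma$. Then, exactly as in the $m+3$-flat case, omitting $\Gamma$ yields a \emph{complete} $m+3$-square relativized representation of $\A=\Fm_T$: completeness comes from the fact that omitting the co-atom type forces every point of the representation to sit inside the image of some atom. A complete $m+3$-square representation of $\A$ induces one of its \d\ completion $\Cm\At\A$ — this is the square-semantics analogue of Lemma \ref{complete}, which I would state and prove by the same $a=\sum X\mapsto g(a)=\bigcup_{x\in X}f(x)$ recipe, checking that $m+3$-squareness of the witnesses is preserved under this passage (squareness is a local one-step property and the supremum argument respects it). But then $\Cm\At\A$, i.e. $\Rd_{ca}\Cm\At\A$, has an $(m+3)$-square relativized representation, so by Theorem \ref{cann}(3) it lies in ${\sf CAB}_{m,m+3}$, contradicting item (1). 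Hence $\Gamma$ cannot be omitted in any $m+3$-square model of $T$.

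\textbf{Main obstacle.} The routine parts — that ${\sf CAB}_{m,m+3}$ is closed under subalgebras, that $F^{n+3}$ and $G_{\omega}^{n+3}$ are ``the same'' game so \pa's strategy transports, that $\Fm_T$ is well-defined — are all essentially bookkeeping, invoking Theorems \ref{can}, \ref{cann}, \ref{gamebasis} verbatim. The one genuinely new lemma needed is the square-semantics version of Lemma \ref{complete}: that a complete $n$-square relativized representation of a countable atomic $\A$ induces an $n$-square representation of $\Cm\At\A$. The subtlety is that in the $n$-flat case one has commutativity of cylindrifiers available, which makes the definable-sets algebra $\D$ land in $\CA_n$; in the merely $n$-square case the constructed $\D$ only lands in $\sf G_n$ and one must be careful that the representation of $\Cm\At\A$ one builds is still $n$-square in the precise sense of Definition \ref{rel}. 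I expect this to go through because $n$-squareness, unlike $n$-flatness, refers only to a single cylindrifier witness move and does not quantify over formulas, so the naive union-of-representing-sets construction preserves it; but verifying this carefully — in particular that the witness $t\in C^n(M)$ demanded by squareness for a join $a=\sum X$ can be found by choosing an appropriate $x\in X$ and applying squareness for that atom — is the step that requires the most attention and is where a hidden gap, if any, would lie.
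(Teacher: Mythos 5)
Your proposal is correct and follows essentially the same route as the paper, whose own proof is the one-line remark that one argues as in Theorem \ref{can} after noting that \pa\ wins the basis game $G_{m+2}^{m+3}$ on the finite rainbow algebra; your expansion — embedding $\Rd_{ca}{\sf PEA}_{m+1,m}$ into $\Rd_{ca}\Cm\At$, transporting \pa's cone strategy from $F^{m+3}$ to $G_{\omega}^{m+3}$, and then invoking Theorem \ref{cann}(3) together with a square analogue of Lemma \ref{complete} for the omitting-types part — is exactly the intended argument. You also correctly isolate the only genuinely new ingredient (the square version of Lemma \ref{complete}), which the paper leaves implicit.
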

\begin{proof} Like the proof of theorem \ref{can} 
by noting that \pa\ can win $G_{m+2}^{m+3}$
on the finite rainbow algebra $m$ dimensional algebra $\sf CA_{m+1, m}$.
\end{proof}

Our next corollary formulated only for $\CA$s is true for any $\K$ between $\Sc$ and $\PEA$.
It also holds for $\sf CAB_{n,n+k}$ in place of $S\Nr_n\CA_{n+k}$, by replacing `flat' by `square' in the last item.

\begin{corollary}Let $n$ be finite with $n>2$ and $k\geq 3$. Then the following hold:

\begin{enumarab}

\item There exist two atomic
cylindric algebras of dimension $n$  with the same atom structure,
one  representable and the other is not in $S\Nr_n\CA_{n+k}$.

\item The variety $\bold S\Nr_n\CA_{n+k}$
is not closed under \d\ completions and is not atom-canonical.
In particular, $\RCA_n$ is not atom-canonical.

\item There exists an algebra outside $\bold S\Nr_n\CA_{n+k}$  with a dense representable
subalgebra.

\item The variety $\bold S\Nr_n\CA_{n+k}$
is not Sahlqvist axiomatizable. In particular, $\RCA_n$ is not Sahlqvist axiomatizable.

\item There exists an atomic representable
$\CA_n$ with no $n+k$ flat complete representation; in particular it has no complete
representation.
\end{enumarab}

\end{corollary}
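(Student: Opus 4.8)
The plan is to derive all five items directly from Theorem~\ref{can} (together with Theorem~\ref{AU} and Lemma~\ref{flat}); no new construction is needed. First I would fix notation: let $\A=\Rd_{ca}\C$, where $\C$ is the countable atomic representable $\PEA_n$ furnished by Theorem~\ref{can}. Then $\A\in\RCA_n$ is countable, atomic and completely additive (being a $\CA_n$), and $\Cm\At\A=\Rd_{ca}\Cm\At\C$. Since the $\Sc$-reduct of any member of $S\Nr_n\CA_{n+3}$ lies in $S\Nr_n\Sc_{n+3}$, while $\Rd_{sc}\Cm\At\C\notin S\Nr_n\Sc_{n+3}$ by Theorem~\ref{can}, I get $\Cm\At\A\notin S\Nr_n\CA_{n+3}$, and hence $\Cm\At\A\notin S\Nr_n\CA_{n+k}$ for every $k\geq3$ (because $S\Nr_n\CA_{n+k}\subseteq S\Nr_n\CA_{n+3}$) and $\Cm\At\A\notin\RCA_n$ (because $\RCA_n=\bigcap_j S\Nr_n\CA_{n+j}\subseteq S\Nr_n\CA_{n+k}$).

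Next I would put $\B=\Tm\At\A$, the term algebra. Complete additivity gives $\B\subseteq\A$, so $\B\in\RCA_n$; moreover $\B$ and $\Cm\At\A$ share the atom structure $\At\A$, $\B$ is dense in $\Cm\At\A$, and $\Cm\At\A$ is the \d\ completion of $\B$. Items (1), (2), (3) then fall out at once: the pair $(\B,\Cm\At\A)$ witnesses (1), both being atomic on $\At\A$ with $\B$ representable and $\Cm\At\A\notin S\Nr_n\CA_{n+k}$; for (2), $\B\in S\Nr_n\CA_{n+k}$ (resp.\ $\B\in\RCA_n$) is atomic while $\Cm\At\B=\Cm\At\A\notin S\Nr_n\CA_{n+k}$ (resp.\ $\notin\RCA_n$), so neither variety is atom-canonical or closed under \d\ completions; and for (3), $\Cm\At\A$ is outside $S\Nr_n\CA_{n+k}$ yet has the dense representable subalgebra $\B$.

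For (4) I would invoke the standard fact that a variety of completely additive atomic $\sf BAO$s possessing a Sahlqvist (indeed, di-persistent) axiomatisation is atom-canonical, since such equations are preserved under \d\ completions of atomic completely additive algebras; combined with (2) this shows $S\Nr_n\CA_{n+k}$, and in particular $\RCA_n$, has no Sahlqvist axiomatisation. For (5) I would argue that $\A$ has no $(n+k)$-flat complete representation, since by Lemma~\ref{flat} (with the companion transfer of a complete flat representation from $\A$ to $\Cm\At\A$) such a representation would put $\Cm\At\A\in S_c\Nr_n\CA_{n+k}\subseteq S\Nr_n\CA_{n+k}$, contradicting the first paragraph; in particular $\A$ has no ordinary complete representation, which can alternatively be seen directly, as a complete representation of $\A$ restricts to one of $\Rd_{df}\C$, impossible by Theorem~\ref{AU} and Theorem~\ref{can} since $\C$ is generated by elements of dimension $<n$. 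The parenthetical variant, with $\sf CAB_{n,n+k}$ replacing $S\Nr_n\CA_{n+k}$ and ``square'' replacing ``flat'' in (5), follows by running the same argument with Theorems~\ref{cann} and~\ref{OTT2} in place of Theorem~\ref{can} and Lemma~\ref{flat}. The only step that is more than bookkeeping is the Sahlqvist-implies-atom-canonical implication used in (4); I would handle it by citing the preservation of di-persistent equations under \d\ completions rather than reproving it, so I do not expect a genuine obstacle here.
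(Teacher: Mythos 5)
Your proposal is correct and follows essentially the same route as the paper: take the algebra of Theorem~\ref{can} (the paper uses $\A$ itself, you use the term algebra $\Tm\At\A$, an immaterial difference since both are representable, dense in $\Cm\At\A$, and share its atom structure), pair it with its \d\ completion for items (1)--(3), invoke preservation of Sahlqvist equations under \d\ completions for (4), and run the flat-representation transfer of Lemmas~\ref{complete} and~\ref{flat} for (5). You merely spell out the $\Sc$-reduct bookkeeping and the monotonicity in $k$ that the paper leaves implicit.
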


\begin{proof} Throughout $\A$ is the algebra constructed in theorem \ref{can}.
\begin{enumarab}

\item $\A$ and $\Cm\At\A$ are such.

\item $\Cm\At\A$ is the \d\ completion of $\A$ (even in the $\PA$ and $\Sc$ cases, because $\A$,
hence its $\Sc$ and $\PA$ reducts are completely additive), hence $S\Nr_n\CA_{n+k}$ is not atom canonical \cite[Proposition 2,88,
Theorem, 2.96]{HHbook}.

\item $\A$ is dense in $\Cm\At\A$.

\item Completely additive varieties defined by Sahlqvist
equations are closed under \d\ completions \cite[Theorem 2.96]{HHbook}.

\item Like the proof of theorem \ref{flat}.
\end{enumarab}
\end{proof}
\section{Strongly representable atom structures}

Here we extend Hirsch and Hodkinson's celebrated result that the class of strongly representable atom structures
of cylindric algebras of finite dimension $>2$ is not elementary, to any class of algebras with signature between
$\Df$ and $\PEA$, reproving a result by Bu and Hodkinson \cite{bh}. We believe that our proof is simpler. 
Throughout this section, $n$ is a finite ordinal $>2$.

\begin{definition}
Let $\Gamma=(G, E)$ be a graph.
\begin{enumerate}
\item{A set $X\subseteq G$ is said to be \textit{independent} if $E\cap(X\times X)=\phi$.}
\item{The \textit{chromatic number} $\chi(\Gamma)$ of $\Gamma$ is the smallest $\kappa<\omega$
such that $G$ can be partitioned into $\kappa$ independent sets,
and $\infty$ if there is no such $\kappa$.}
\end{enumerate}
\end{definition}

\begin{definition}\ \begin{enumerate}
\item For an equivalence relation $\sim$ on a set $X$, and $Y\subseteq
X$, we write $\sim\upharpoonright Y$ for $\sim\cap(Y\times Y)$. For
a partial map $K:n\rightarrow\Gamma\times n$ and $i, j<n$, we write
$K(i)=K(j)$ to mean that either $K(i)$, $K(j)$ are both undefined,
or they are both defined and are equal.
\item For any two relations $\sim$ and $\approx$. The composition of $\sim$ and $\approx$ is
the set
$$\sim\circ\approx=\{(a, b):\exists c(a\sim c\wedge c\approx b)\}.$$\end{enumerate}
\end{definition}
\begin{definition}Let $\Gamma$ be a graph.
We define an atom structure $\eta(\Gamma)=\langle H, D_{ij},
\equiv_{i}, \equiv_{ij}:i, j<n\rangle$ as follows:
\begin{enumerate}
\item$H$ is the set of all pairs $(K, \sim)$ where $K:n\rightarrow \Gamma\times n$ is a partial map and $\sim$ is an equivalence relation on $n$
satisfying the following conditions \begin{enumerate}\item If
$|n\diagup\sim|=n$, then $\dom(K)=n$ and $\rng(K)$ is not an independent
subset of $n$.
\item If $|n\diagup\sim|=n-1$, then $K$ is defined only on the unique $\sim$ class $\{i, j\}$ say of size $2$ and $K(i)=K(j)$.
\item If $|n\diagup\sim|\leq n-2$, then $K$ is nowhere defined.
\end{enumerate}
\item $D_{ij}=\{(K, \sim)\in H : i\sim j\}$.
\item $(K, \sim)\equiv_{i}(K', \sim')$ iff $K(i)=K'(i)$ and $\sim\upharpoonright(n\setminus\{i\})=\sim'\upharpoonright(n\setminus\{i\})$.
\item $(K, \sim)\equiv_{ij}(K', \sim')$ iff $K(i)=K'(j)$, $K(j)=K'(i)$, and $K(\kappa)=K'(\kappa) (\forall\kappa\in n\setminus\{i, j\})$
and if $i\sim j$ then $\sim=\sim'$, if not, then $\sim'=\sim\circ[i,
j]$.
\end{enumerate}
\end{definition}
It may help to think of $K(i)$ as assigning the nodes $K(i)$ of
$\Gamma\times n$ not to $i$ but to
the set $n\setminus\{i\}$, so long as its elements are pairwise non-equivalent via $\sim$.\\

For a set $X$, $\mathcal{B}(X)$ denotes the Boolean algebra
$\langle\wp(X), \cap, \cup, \sim, \emptyset,  X\rangle$. 
We denote ${\sf s}_{[i/j]}$ by ${\sf s}^i_j$.

\begin{definition}\label{our algebra}
Let $\B(\Gamma)=\langle\B(\eta(\Gamma)), {\sf c}_{i},
{\sf s}^{i}_{j}, {\sf s}_{[i,j]}, {\sf d}_{ij}\rangle_{i, j<n}$ be the algebra, with
extra non-Boolean operations defined 
for all $X\subseteq \eta(\Gamma)$ as follows:
\begin{align*}
{\sf d}_{ij}&=D_{ij},\\
{\sf c}_{i}X&=\{c: \exists a\in X, a\equiv_{i}c\},\\
{\sf s}_{[i,j]}X&=\{c: \exists a\in X, a\equiv_{ij}c\},\\
{\sf s}^{i}_{j}X&=\begin{cases}
{\sf c}_{i}(X\cap D_{ij}), &\text{if $i\not=j$,}\\
X, &\text{if $i=j$.}
\end{cases}
\end{align*}
\end{definition}

\begin{definition}
For any $\tau\in\{\pi\in n^{n}: \pi \text{ is a bijection}\}$, and
any $(K, \sim)\in\eta(\Gamma)$, we define $\tau(K,
\sim)=(K\circ\tau, \sim\circ\tau)$.\end{definition}

The proof of the following two Lemmas is straightforward.
\begin{lemma}\label{Lemma 1} 
For any $\tau\in\{\pi\in {}^nn: \pi \text{ is a bijection}\}$, and
any $(K, \sim)\in\eta(\Gamma)$. $\tau(K, \sim)\in\eta(\Gamma)$.
\end{lemma}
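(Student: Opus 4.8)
The statement to be proved is Lemma~\ref{Lemma 1}: for any bijection $\tau\in{}^nn$ and any $(K,\sim)\in\eta(\Gamma)$, we have $\tau(K,\sim)=(K\circ\tau,\sim\circ\tau)\in\eta(\Gamma)$. The plan is to verify directly that the pair $(K\circ\tau,\sim\circ\tau)$ satisfies the three defining conditions (a)--(c) in the definition of $\eta(\Gamma)$, using that $\tau$ is a bijection of $n$. The key observation I would record first is that since $\tau$ is a bijection, the map $[x]_\sim\mapsto[\tau^{-1}(x)]_{\sim\circ\tau}$ is a well-defined bijection between the quotient $n/\!\sim$ and $n/(\sim\circ\tau)$; hence $|n/(\sim\circ\tau)|=|n/\!\sim|$. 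This single fact is what makes the case split survive the action of $\tau$: whichever of the three clauses applies to $(K,\sim)$, the same clause (the one for that value of $|n/\!\sim|$) is the one we must check for $(K\circ\tau,\sim\circ\tau)$.

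Then I would run through the three cases. In case (a), $|n/\!\sim|=n$ means $\sim$ is the identity, so $\sim\circ\tau$ is again (the graph of) a bijection $n\to n$, and $|n/(\sim\circ\tau)|=n$; we need $\dom(K\circ\tau)=n$, which holds because $\dom(K)=n$ and $\tau$ is total and surjective, and we need $\rng(K\circ\tau)$ not independent in $\Gamma\times n$, which holds because $\rng(K\circ\tau)=\rng(K)$ as $\tau$ is onto. In case (b), $|n/\!\sim|=n-1$, so there is a unique $\sim$-class $\{i,j\}$ of size $2$ with $K(i)=K(j)$ and $K$ undefined elsewhere; under $\tau$ the unique $(\sim\circ\tau)$-class of size $2$ is $\{\tau^{-1}(i),\tau^{-1}(j)\}$, and $(K\circ\tau)(\tau^{-1}(i))=K(i)=K(j)=(K\circ\tau)(\tau^{-1}(j))$, while $K\circ\tau$ is undefined on all other elements since $K$ is undefined off $\{i,j\}$. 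In case (c), $|n/\!\sim|\le n-2$ forces $K$ nowhere defined, hence $K\circ\tau$ is nowhere defined, and $|n/(\sim\circ\tau)|\le n-2$ as well; done.

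The only genuinely delicate point, and the one I would state carefully, is the bookkeeping around ``$K(i)=K(j)$ means both undefined or both defined and equal'' interacting with precomposition by $\tau$: one must check that $\dom(K\circ\tau)=\tau^{-1}(\dom K)$ and that equality of partial-map values is preserved, which is immediate once phrased correctly but is the place where an error could creep in. There is no real obstacle here --- the lemma is, as the paper says, straightforward --- so the ``hard part'' is simply being scrupulous that the case analysis is exhaustive and that each sub-claim uses only surjectivity/injectivity of $\tau$ (surjectivity for ``range unchanged'', injectivity for ``quotient size unchanged''). I would close by remarking that the companion fact, that $\tau$ induces an automorphism of the atom structure $\eta(\Gamma)$ (i.e.\ it also respects $D_{ij}$, $\equiv_i$, $\equiv_{ij}$), follows by the same elementary substitution of variables, but that is the content of the next lemma rather than this one.
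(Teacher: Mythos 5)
Your proof is correct: the paper itself omits the argument, stating only that it is straightforward, and your case analysis on $|n/\!\sim|$ together with the observation that a bijection $\tau$ preserves the quotient size and sends $\dom(K)$, $\rng(K)$ to $\tau^{-1}(\dom K)$, $\rng(K)$ is exactly the intended routine verification. (Only the parenthetical in case (a) calling $\sim\circ\tau$ ``the graph of a bijection'' is loose --- under the pullback reading it is again the identity relation --- but this does not affect the argument.)
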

\begin{lemma}\label{Lemma 2}\ For any $(K, \sim)$, $(K', \sim')$, and $(K'', \sim'')\in\eta(\Gamma)$, and $i, j\in n$:
\begin{enumerate}
\item$(K, \sim)\equiv_{ii}(K', \sim')\Longleftrightarrow (K, \sim)=(K', \sim')$.
\item$(K, \sim)\equiv_{ij}(K', \sim')\Longleftrightarrow (K, \sim)\equiv_{ji}(K', \sim')$.
\item If $(K, \sim)\equiv_{ij}(K', \sim')$, and $(K, \sim)\equiv_{ij}(K'', \sim'')$, then it follows that $(K', \sim')=(K'', \sim'')$.
\item If $(K, \sim)\in D_{ij}$, then we have 
$(K, \sim)\equiv_{i}(K', \sim')\Longleftrightarrow\exists(K_{1},
\sim_{1})\in\eta(\Gamma), (K, \sim) \equiv_{j}(K_{1},
\sim_{1})\wedge(K', \sim')\equiv_{ij}(K_{1}, \sim_{1})$.
\item ${\sf s}_{[i,j]}(\eta(\Gamma))=\eta(\Gamma)$.
\end{enumerate}
\end{lemma}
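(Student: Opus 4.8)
The plan is to dispatch the five clauses by directly unfolding the definitions of $\equiv_i$, $\equiv_{ij}$ and $D_{ij}$, reserving the real work for clause~(4), the only clause with genuine content. Throughout I write an element of $\eta(\Gamma)$ as a pair $(K,\sim)$, and I will repeatedly use the observation (immediate from conditions (a)--(c) in the definition of $H$) that whenever $i\sim j$ one has $K(i)=K(j)$: either $|n/\!\sim|\le n-2$ and $K$ is nowhere defined, or $|n/\!\sim|=n-1$ and $K$ is defined only on the class $\{i,j\}$ with $K(i)=K(j)$.

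For clause~(1), note that $[i,i]$ is the identity and $i\sim i$ by reflexivity, so the defining clause of $\equiv_{ii}$ collapses to $K(\kappa)=K'(\kappa)$ for every $\kappa\in n$ together with $\sim\,=\,\sim'$; this is exactly $(K,\sim)=(K',\sim')$, and the converse is trivial. For clause~(2), the $K$-conditions defining $\equiv_{ij}$ and $\equiv_{ji}$ coincide since $\{i,j\}=\{j,i\}$ and the two equations $K(i)=K'(j)$, $K(j)=K'(i)$ are merely interchanged; the $\sim$-conditions coincide because $\sim$ is symmetric (so $i\sim j\Leftrightarrow j\sim i$) and $[i,j]=[j,i]$. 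For clause~(3), the defining equations of $\equiv_{ij}$ express $K'$ explicitly in terms of $K$ (namely $K'(i)=K(j)$, $K'(j)=K(i)$, and $K'(\kappa)=K(\kappa)$ otherwise) and likewise determine $\sim'$ from $\sim$ (it is $\sim$ if $i\sim j$, and $\sim\circ[i,j]$ otherwise); hence both components of the image are uniquely fixed by $(K,\sim)$, giving $(K',\sim')=(K'',\sim'')$. A byproduct of (2) and (3) that I will record and reuse is that $\equiv_{ij}$ is a total, symmetric, single-valued relation, i.e.\ an involution of $H$ agreeing with the action $(K,\sim)\mapsto[i,j](K,\sim)$. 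Clause~(5) then follows at once: by Lemma~\ref{Lemma 1}, for every $c\in\eta(\Gamma)$ the element $a=[i,j](c)$ again lies in $\eta(\Gamma)$ and satisfies $a\equiv_{ij}c$, so $c\in{\sf s}_{[i,j]}(\eta(\Gamma))$, whence ${\sf s}_{[i,j]}(\eta(\Gamma))=\eta(\Gamma)$.

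The main obstacle is clause~(4), the cylindric identity linking $\equiv_i$, $\equiv_j$ and $\equiv_{ij}$ under the hypothesis $i\sim j$. For the direction ($\Rightarrow$), given $(K,\sim)\equiv_i(K',\sim')$ I would take the witness $(K_1,\sim_1)=[i,j](K',\sim')$; Lemma~\ref{Lemma 1} places it in $\eta(\Gamma)$ and $(K',\sim')\equiv_{ij}(K_1,\sim_1)$ holds by construction, so the only thing left is $(K,\sim)\equiv_j(K_1,\sim_1)$. The $K$-part is clean: $K_1(j)=K'(i)=K(i)=K(j)$, using $K(i)=K'(i)$ from the hypothesis and $K(i)=K(j)$ forced by $i\sim j$. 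For the converse ($\Leftarrow$), given a witness $(K_1,\sim_1)$ I unfold $(K,\sim)\equiv_j(K_1,\sim_1)$ and $(K',\sim')\equiv_{ij}(K_1,\sim_1)$ to compute $K'(i)=K_1(j)=K(j)=K(i)$ (again via $i\sim j$), which yields $K(i)=K'(i)$, and then chase the $\sim$-restrictions. The delicate point in both directions is the $\sim$-component: off $\{i,j\}$ nothing moves, but on the indices involving $i$ or $j$ the relabelling $[i,j]$ hidden inside $\equiv_{ij}$ is meant to cancel against the $j$-shift of $\equiv_j$, and this cancellation should deliver $\sim\!\upharpoonright\!(n\setminus\{i\})=\sim'\!\upharpoonright\!(n\setminus\{i\})$. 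I expect the bookkeeping here, together with checking that the constructed $(K_1,\sim_1)$ genuinely respects the structural constraints (a)--(c), to be the one nontrivial verification; once $K(i)=K(j)$ is exploited, each remaining check reduces to a finite case split on the value of $|n/\!\sim|$.
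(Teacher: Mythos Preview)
Your proposal is correct and is exactly the kind of direct unfolding of the definitions the paper has in mind; note that the paper itself gives no proof at all beyond declaring the lemma ``straightforward,'' so there is nothing substantive to compare against. Your identification of clause~(4) as the only place requiring genuine bookkeeping (the cancellation of the $[i,j]$-relabelling in $\equiv_{ij}$ against the $j$-shift in $\equiv_j$, under the hypothesis $i\sim j$) is accurate, and your sketch of both directions is sound.
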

The proof of the next lemma is tedious but not too hard.
\begin{theorem}\label{it is qea}
For any graph $\Gamma$, $\B(\Gamma)$ is a simple
$\PEA_n$.
\end{theorem}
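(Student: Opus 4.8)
The statement to be established is that for any graph $\Gamma$, the algebra $\B(\Gamma)$ defined in Definition~\ref{our algebra} is a simple $\PEA_n$. The plan is to verify the $\PEA_n$ axioms directly on the atom structure $\eta(\Gamma)$, exploiting that $\B(\Gamma)$ is the complex algebra of a relational atom structure; indeed $\B(\Gamma)=\Cm(\eta(\Gamma))$ by construction, with the non-Boolean operators induced from the accessibility relations $\equiv_i$, $\equiv_{ij}$ and the unary predicates $D_{ij}$. By the general theory of $\sf BAO$s, it suffices to check that the relational atom structure $\eta(\Gamma)$ validates the Sahlqvist-type first-order correspondents of the $\PEA_n$ axioms; most of these reduce to purely combinatorial statements about the partial maps $K$ and the equivalences $\sim$. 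First I would record that $\B(\Gamma)$ is a $\CA_n$: the correspondents of the cylindric axioms for $\cyl i$ and $\diag i j$ amount to checking that each $\equiv_i$ is an equivalence relation on $\eta(\Gamma)$ (reflexivity, symmetry, transitivity are immediate from the definition since $\sim\restr(n\setminus\{i\})$ and $K(i)$ are just being compared), that $D_{ij}$ interacts correctly with $\equiv_i$ (axiom $\diag i i=1$ is clear since $i\sim i$ always, and $\cyl i(\diag i j\cdot x)\leq$ the appropriate substitution is exactly clause~(4) of Lemma~\ref{Lemma 2}), and that $\cyl i\cyl j=\cyl j\cyl i$ — for this one shows that $\equiv_i\circ\equiv_j=\equiv_j\circ\equiv_i$ on $\eta(\Gamma)$, which is a short diagram chase on the pairs $(K,\sim)$ using that modifying $K$ and $\sim$ at coordinate $i$ commutes with modifying them at coordinate $j$. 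The conditions (a)--(c) in the definition of $H$ (constraining $\dom K$ by $|n/\!\sim|$) must be checked to be preserved under these moves; this is where a little care is needed, but it is routine.

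Next I would handle the polyadic (transposition) operators $\s_{[i,j]}$. The key facts are Lemma~\ref{Lemma 1} (the permutation action is well-defined on $\eta(\Gamma)$) and Lemma~\ref{Lemma 2}, especially items (1), (2), (3) and (5). Item~(5), $\s_{[i,j]}(\eta(\Gamma))=\eta(\Gamma)$, gives $\s_{[i,j]}1=1$; items (1) and (3) say that $\equiv_{ij}$ is a total function on atoms with $\equiv_{ii}=\,$identity, which yields that $\s_{[i,j]}$ is a Boolean endomorphism with $\s_{[i,i]}=\mathrm{Id}$; item~(2) gives $\s_{[i,j]}=\s_{[j,i]}$; and the composition laws $\s_{[i,j]}\s_{[i,j]}=\mathrm{Id}$ and the interaction of $\s_{[i,j]}$ with $\cyl k$, $\diag{k}{l}$ and with other transpositions are verified by composing the corresponding permutations $[i,j]$ acting on the coordinate set $n$, using $\tau(K,\sim)=(K\circ\tau,\sim\circ\tau)$. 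Since any finitary permutation is a product of transpositions, and the $\PEA_n$ axioms for $\s_\tau$ with general $\tau$ are generated (as is standard, see \cite{HMT2}) by those for transpositions together with the $\CA$ axioms, this suffices. The defining equation ${\sf s}^i_j X = {\sf c}_i(X\cap D_{ij})$ for $i\neq j$ (and $=X$ for $i=j$) then makes the substitution-for-replacement operators definable from the cylindric structure in the usual way, so no extra work is needed there; one only records that this matches the $\PEA_n$ term-definition of $\s_{[i/j]}$.

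Finally, for simplicity: I would show that $\eta(\Gamma)$ is "$\cyl{(n)}$-connected" in the sense that for any two atoms $a,b\in\eta(\Gamma)$ we have $b\leq\cyl 0\cyl 1\cdots\cyl{n-1}a$, equivalently $a$ and $b$ are joined by a chain of $\equiv_i$ steps. Given $(K,\sim)$ and $(K',\sim')$, one can first cylindrify away all coordinates to reach an atom with $|n/\!\sim|\leq n-2$ and $K$ nowhere defined (clause (c)) — but one must be careful because clause (a) forbids reaching arbitrary $K$ with $|n/\!\sim|=n$. The clean route is: show every atom can be $\equiv_i$-connected to some atom in the "degenerate" region $|n/\!\sim|\leq n-2$ where $K$ is empty, and show that this degenerate region is a single $\equiv_i$-orbit-reachable block, so any two atoms are connected. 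Here the nontriviality of the construction (the requirement that $\rng K$ be non-independent when $|n/\!\sim|=n$) is precisely what links $\B(\Gamma)$ to the chromatic number of $\Gamma$, but it does \emph{not} obstruct simplicity: since $n\geq 3$ one always has room to collapse $\sim$-classes. The upshot is that $\B(\Gamma)$ has no proper non-trivial ideals, hence is simple. I expect the main obstacle to be the bookkeeping in preserving conditions (a)--(c) of the definition of $H$ under the $\equiv_i$ and $\equiv_{ij}$ moves, and in particular verifying the commutativity of cylindrifiers and the $\cyl{(n)}$-connectedness argument against those constraints; these are the places where the specific combinatorics of $\eta(\Gamma)$ (as opposed to a generic $\PEA_n$ atom structure) actually intervene. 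This is exactly the content the paper flags as "tedious but not too hard."
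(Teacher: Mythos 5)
Your plan is correct in substance, and on the genuinely new part of the theorem it coincides with the paper's proof: the verification of the transposition axioms (that ${\sf s}_{[i,j]}$ is a Boolean endomorphism, involutive and symmetric in $i,j$; the interaction ${\sf s}_{[i,j]}{\sf s}^{i}_{j}={\sf s}^{j}_{i}$; and the commutation ${\sf s}_{[i,j]}{\sf s}_{[i,\kappa]}={\sf s}_{[j,\kappa]}{\sf s}_{[i,j]}$ via an explicit $3$-cycle acting by $\tau(K,\sim)=(K\circ\tau,\sim\circ\tau)$) is carried out exactly as you describe, resting on Lemmas \ref{Lemma 1} and \ref{Lemma 2} and organized around the finite axiomatization $Q_1$--$Q_{11}$ of \cite{ST}. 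Where you genuinely diverge is on the cylindric skeleton and on simplicity. You propose to verify from scratch that $\eta(\Gamma)$ validates the $\CA_n$ correspondents --- in particular the confluence $\equiv_i\circ\equiv_j\;=\;\equiv_j\circ\equiv_i$, which requires constructing the intermediate atom while respecting clauses (a)--(c) of the definition of $H$ --- and to prove simplicity by a direct ${\sf c}_{(n)}$-connectedness argument on atoms. The paper does neither: it imports both facts from \cite{hirsh}, where $\Rd_{ca}\B(\Gamma)$ was already shown to be a simple $\CA_n$, and then transfers simplicity to the $\PEA_n$ expansion by the (one-line, and the only direction actually needed) observation that every ideal of $\B(\Gamma)$ is in particular an ideal of its $\CA$ reduct. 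Your route buys self-containedness at the cost of the confluence and connectedness bookkeeping you rightly flag as the delicate spots; both do go through --- e.g.\ because $n\geq 3$ always leaves at least one unconstrained coordinate of $K$ with which to restore non-independence of $\rng(K)$ when $|n\diagup\sim|$ returns to $n$, and because every atom is $\equiv_i$-chain-connected to the single degenerate atom with $\sim$ the all-relation and $K$ empty --- but they duplicate work already done in \cite{hirsh}. The paper's organization has the advantage of isolating what is actually new here, namely that the known strongly/weakly representable $\CA_n$ atom structures of \cite{hirsh} carry well-defined polyadic operations.
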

\begin{proof}\
We follow the axiomatization in \cite{ST} except renaming the items by $Q_i$.
Let $X\subseteq\eta(\Gamma)$, and $i, j, \kappa\in n$:
\begin{enumerate}
\item ${\sf s}^{i}_{i}=Id$ by definition \ref{our algebra}, ${\sf s}_{[i,i]}X=\{c:\exists a\in X, a\equiv_{ii}c\}=\{c:\exists a\in X, a=c\}=X$
(by Lemma \ref{Lemma 2} (1));\\
${\sf s}_{[i,j]}X=\{c:\exists a\in X, a\equiv_{ij}c\}=\{c:\exists a\in X,
a\equiv_{ji}c\}={\sf s}_{[j,i]}X$ (by Lemma \ref{Lemma 2} (2)).
\item Axioms $Q_{1}$, $Q_{2}$ follow directly from the fact that the
reduct $\mathfrak{Rd}_{ca}\mathfrak{B}(\Gamma)=\langle\mathcal{B}(\eta(\Gamma)), {\sf c}_{i}$, ${\sf d}_{ij}\rangle_{i, j<n}$
is a cylindric algebra which is proved in \cite{hirsh}.
\item Axioms $Q_{3}$, $Q_{4}$, $Q_{5}$
follow from the fact that the reduct $\mathfrak{Rd}_{ca}\mathfrak{B}(\Gamma)$ is a cylindric algebra, and from \cite{HMT2}
(Theorem 1.5.8(i), Theorem 1.5.9(ii), Theorem 1.5.8(ii)).
\item ${\sf s}^{i}_{j}$ is a Boolean endomorphism by \cite{HMT2} (Theorem 1.5.3).
\begin{align*}
{\sf s}_{[i,j]}(X\cup Y)&=\{c:\exists a\in(X\cup Y), a\equiv_{ij}c\}\\
&=\{c:(\exists a\in X\vee\exists a\in Y), a\equiv_{ij}c\}\\
&=\{c:\exists a\in X, a\equiv_{ij}c\}\cup\{c:\exists a\in Y,
a\equiv_{ij}c\}\\
&={\sf s}_{[i,j]}X\cup {\sf s}_{ij}Y.
\end{align*}
${\sf s}_{[i,j]}(-X)=\{c:\exists a\in(-X), a\equiv_{ij}c\}$, and
${\sf s}_{[i,j]}X=\{c:\exists a\in X, a\equiv_{ij}c\}$ are disjoint. For, let
$c\in({\sf s}_{[i,j]}(X)\cap {\sf s}_{ij}(-X))$, then $\exists a\in X\wedge b\in
(-X)$, such that $a\equiv_{ij}c$, and $b\equiv_{ij}c$. Then $a=b$, (by
Lemma \ref{Lemma 2} (3)), which is a contradiction. Also,
\begin{align*}
{\sf s}_{[i,j]}X\cup {\sf s}_{[i,j]}(-X)&=\{c:\exists a\in X,
a\equiv_{ij}c\}\cup\{c:\exists a\in(-X), a\equiv_{ij}c\}\\
&=\{c:\exists a\in(X\cup-X), a\equiv_{ij}c\}\\
&={\sf s}_{[i,j]}\eta(\Gamma)\\
&=\eta(\Gamma). \text{ (by Lemma \ref{Lemma 2} (5))}
\end{align*}
therefore, ${\sf s}_{[i,j]}$ is a Boolean endomorphism.
\item \begin{align*}{\sf s}_{[i,j]}{\sf s}_{[i,j]}X&={\sf s}_{[i,j]}\{c:\exists a\in X, a\equiv_{ij}c\}\\
&=\{b:(\exists a\in X\wedge c\in\eta(\Gamma)), a\equiv_{ij}c, \text{ and }
c\equiv_{ij}b\}\\
&=\{b:\exists a\in X, a=b\}\\
&=X.\end{align*}
\item\begin{align*}{\sf s}_{[i,j]}{\sf s}^{i}_{j}X&=\{c:\exists a\in {\sf s}^{i}_{j}X, a\equiv_{ij}c\}\\
&=\{c:\exists b\in(X\cap {\sf d}_{ij}),a\equiv_{i}b\wedge
a\equiv_{ij}c\}\\
&=\{c:\exists b\in(X\cap {\sf d}_{ij}), c\equiv_{j}b\} \text{ (by Lemma
\ref{Lemma 2} (4))}\\
&={\sf s}^{j}_{i}X.\end{align*}
\item We need to prove that ${\sf s}_{[i,j]}{\sf s}_{[i,\kappa]}X={\sf s}_{[j,\kappa]}{\sf s}_{[i,j]}X$ if $|\{i, j, \kappa\}|=3$.

Let $(K, \sim)\in {\sf s}_{[i,j]}{\sf s}_{[i,\kappa]}X$ then
$\exists(K', \sim')\in\eta(\Gamma)$, and $\exists(K'', \sim'')\in X$
such that $(K'', \sim'')\equiv_{i\kappa}(K', \sim')$ and $(K',
\sim')\equiv_{ij}(K, \sim)$.\\
Define $\tau:n\rightarrow n$ as follows:
\begin{align*}\tau(i)&=j\\
\tau(j)&=\kappa\\
\tau(\kappa)&=i, \text{ and}\\
\tau(l)&=l \text{ for every } l\in(n\setminus\{i, j,
\kappa\}).\end{align*} Now, it is easy to verify that $\tau(K',
\sim')\equiv_{ij}(K'', \sim'')$, and $\tau(K',
\sim')\equiv_{j\kappa}(K, \sim)$. Therefore, $(K, \sim)\in
{\sf s}_{[j,\kappa]}{\sf s}_{[i,j]}X$, i.e., ${\sf s}_{[i,j]}{\sf s}_{[i,\kappa]}X\subseteq
{\sf s}_{[j,\kappa]}{\sf s}_{[i,j]}X$. Similarly, we can show that
${\sf s}_{[j,\kappa]}{\sf s}_{[i,j]}X\subseteq {\sf s}_{[i,j]}{\sf s}_{[i,\kappa]}X$.
\item Axiom $Q_{10}$ follows from \cite{HMT2} (Theorem 1.5.7)
\item Axiom $Q_{11}$ follows from axiom 2, and the definition of ${\sf s}^{i}_{j}$.
\end{enumerate}
Since $\Rd_{ca}\B$ is a simple $\CA_{n}$, by
\cite{hirsh}, then $\B$ is a simple $\PEA_n$. This follows from the fact that ideals $I$ is an ideal in $\Rd_{ca}\B$ if and only if it is an ideal in
$\B$.
\end{proof}

\begin{theorem}
$\B(\Gamma)$ is a simple $\PEA_{n}$ generated using infinite unions 
by the set of the $n-1$ dimensional elements.
\end{theorem}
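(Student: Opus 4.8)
The plan is to prove two things: that $\B(\Gamma)$ is generated (under infinite unions, i.e. as a complete subalgebra of its own complex-algebra structure) by its $n-1$ dimensional elements, and — since simplicity was already established in Theorem~\ref{it is qea} — we need not reprove it, only recall it. Recall that in $\B(\Gamma)=\B(\eta(\Gamma))$ the universe is $\wp(\eta(\Gamma))$, so every element is a union of singletons $\{(K,\sim)\}$, and it suffices to show that each singleton atom $\{(K,\sim)\}$ is obtained from $n-1$ dimensional elements by the $\PEA_n$ operations together with infinite unions. An element $x$ is $n-1$ dimensional here means $\Delta x \subseteq n\setminus\{k\}$ for some $k<n$, equivalently $\cyl k x = x$; the point of the theorem is exactly the hypothesis needed to invoke Theorems~\ref{AU} and the representability transfer to diagonal-free reducts used in Theorem~\ref{can}.

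First I would fix an atom $a=(K,\sim)\in\eta(\Gamma)$ and produce, for each $k<n$, the element $\cyl k\{a\}=\{(K',\sim'):(K',\sim')\equiv_k(K,\sim)\}$, which by definition of $\equiv_k$ is $n-1$ dimensional (it is fixed by $\cyl k$ since $\equiv_k$ is an equivalence relation, so $\cyl k\cyl k\{a\}=\cyl k\{a\}$). The natural candidate is then
\[
\{a\}=\bigcap_{k<n}\cyl k\{a\},
\]
i.e. I would check that the intersection over all $k$ of these $n-1$ dimensional elements recovers the singleton. The inclusion $\supseteq$ is immediate since $a\equiv_k a$ for all $k$. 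For $\subseteq$, suppose $(K',\sim')\equiv_k(K,\sim)$ for every $k<n$; I must argue $(K',\sim')=(K,\sim)$. Unwinding the definition of $\equiv_k$: $K'(k)=K(k)$ for all $k$, so $K'=K$; and $\sim'\!\restriction(n\setminus\{k\})=\sim\!\restriction(n\setminus\{k\})$ for every $k$, and since $n\ge 3$ the family $\{n\setminus\{k\}:k<n\}$ covers all pairs, so $\sim'=\sim$. Hence equality holds and $\{a\}=\bigcap_{k<n}\cyl k\{a\}$, an intersection (Boolean meet) of finitely many $n-1$ dimensional elements. Then every element of $\B(\Gamma)$, being the union of the singletons it contains, lies in the complete subalgebra generated by the $n-1$ dimensional elements, which proves the generation claim; simplicity is Theorem~\ref{it is qea}.

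I expect the only mild subtlety to be the bookkeeping in the case analysis on $|n/\!\sim|$ in the definition of $\eta(\Gamma)$: one should double-check that $\cyl k\{a\}$ genuinely consists of legitimate elements of $\eta(\Gamma)$ (this is essentially the content of the $\equiv_k$-closure already used implicitly when verifying the cylindric axioms in \cite{hirsh}) and that no degenerate case (e.g. $|n/\!\sim|\le n-2$, where $K$ is nowhere defined) causes the covering argument for $\sim$ to fail — but since $n>2$ the covering $\bigcup_{k<n}(n\setminus\{k\})^{(2)}=n^{(2)}$ is unconditional, so there is no obstacle there. The main ``obstacle'' is really just stating precisely what ``generated using infinite unions by the $n-1$ dimensional elements'' means and confirming it is the property consumed downstream: namely that $\B(\Gamma)$ sits inside $\Cm\At\B(\Gamma)$ as a dense subalgebra generated by low-dimensional elements, so that Theorem~\ref{AU} applies and representability of $\B(\Gamma)$ is equivalent to that of its $\Df$-reduct — which is exactly the leverage needed in the subsequent non-elementarity argument for strongly representable atom structures.
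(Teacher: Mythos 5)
Your proof is correct and follows essentially the same route as the paper: the paper likewise shows $\{(K,\sim)\}=\prod_{i<n}{\sf c}_i\{(K,\sim)\}$, notes each ${\sf c}_i\{(K,\sim)\}$ is $n-1$ dimensional, and concludes via atomicity that every element is an infinite union of such finite meets, with simplicity quoted from Theorem~\ref{it is qea}. Your explicit remark that $n\ge 3$ makes the sets $n\setminus\{k\}$ cover all pairs (so the restrictions of $\sim'$ determine $\sim'$) is a detail the paper leaves implicit, but it is the same argument.
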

\begin{proof}
$\mathfrak{B}(\Gamma)$ is a simple $\PEA_{n}$ from  Theorem \ref{it
is qea}. It remains to show, since $\B$ is atomic, so that every element is an infinite union of atoms
 that $\{(K, \sim)\}=\prod\{{\sf c}_{i}\{(K,
\sim)\}: i<n\}$ for any $(K, \sim)\in H$. Let $(K, \sim)\in H$,
clearly $\{(K, \sim)\}\leq\prod\{{\sf c}_{i}\{(K, \sim)\}: i<n\}$. For the
other direction assume that $(K', \sim')\in H$ and $(K,
\sim)\not=(K', \sim')$. We show that $(K',
\sim')\not\in\prod\{{\sf c}_{i}\{(K, \sim)\}: i<n\}$. Assume toward a
contradiction that $(K', \sim')\in\prod\{{\sf c}_{i}\{(K, \sim)\}:i<n\}$,
then $(K', \sim')\in {\sf c}_{i}\{(K, \sim)\}$ for all $i<n$, i.e.,
$K'(i)=K(i)$ and
$\sim'\upharpoonright(n\setminus\{i\})=\sim\upharpoonright(n\setminus\{i\})$
for all $i<n$. Therefore, $(K, \sim)=(K', \sim')$ which makes a
contradiction, and hence we get the other direction.
\end{proof}
\begin{theorem}\label{chr. no.}
Let $\Gamma$ be a graph.
\begin{enumerate}\item Suppose that $\chi(\Gamma)=\infty$. Then $\mathfrak{B}(\Gamma)$
is representable.\item If $\Gamma$ is infinite and
$\chi(\Gamma)<\infty$ then $\Rd_{df}\B(\Gamma)$
is not
representable
\end{enumerate}
\end{theorem}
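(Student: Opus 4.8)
}

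The plan is to follow the Hirsch--Hodkinson strategy for their cylindric algebras $\mathfrak{B}(\Gamma)$ from \cite{hirsh}, lifting it to the polyadic equality signature by exploiting the fact (established in the preceding theorems) that $\mathfrak{B}(\Gamma)$ is generated by its $n-1$ dimensional elements, so that Theorem \ref{AU} applies. First I would prove part (1). Assume $\chi(\Gamma) = \infty$. The key observation is that an infinite chromatic number means that for every finite colouring $c : \Gamma \to k$ there is a monochromatic edge; equivalently, $\Gamma$ has arbitrarily long ``rainbow-free'' configurations. One uses this to build a representation of $\Rd_{ca}\mathfrak{B}(\Gamma)$: the standard construction in \cite{hirsh} produces, from a surjection onto a large complete graph (or rather from a suitable homomorphism exploiting $\chi(\Gamma)=\infty$), an atomic network labelling that realises every atom $(K,\sim)$ and satisfies the cylindric network conditions. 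Since $\Rd_{ca}\mathfrak{B}(\Gamma)$ is representable by \cite{hirsh}, it remains only to represent the polyadic (transposition) operations $\s_{[i,j]}$. But these are defined from $\equiv_{ij}$, which by Lemma \ref{Lemma 2} is essentially a coordinate swap, so a representation of the cylindric reduct automatically respects them: one defines the action of $\s_{[i,j]}$ on the representing set by permuting the two coordinates $i,j$ of each $n$-ary sequence, and checks using the defining equations $Q_i$ (verified in Theorem \ref{it is qea}) that this is consistent. Alternatively, and more slickly, since $\mathfrak{B}(\Gamma)$ is generated by elements of dimension $< n$, Theorem \ref{AU} reduces representability of $\mathfrak{B}(\Gamma)$ to representability of $\Rd_{df}\mathfrak{B}(\Gamma)$, and the latter follows from representability of $\Rd_{ca}\mathfrak{B}(\Gamma)$.

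For part (2), suppose $\Gamma$ is infinite with $\chi(\Gamma) = k < \infty$, and argue by contradiction that $\Rd_{df}\mathfrak{B}(\Gamma)$ is representable. Here I would again invoke the generation-by-low-dimensional-elements property together with Theorem \ref{AU}: a representation of $\Rd_{df}\mathfrak{B}(\Gamma)$ would lift to a representation of $\mathfrak{B}(\Gamma)$ as a $\PEA_n$ (indeed the hypothesis in Theorem \ref{AU} is exactly that the algebra is generated by elements $a$ with $\Delta a \neq n$, which we have). So it suffices to derive a contradiction from the existence of a representation of $\mathfrak{B}(\Gamma)$. The Hirsch--Hodkinson argument in \cite{hirsh} shows that from such a representation one can extract a finite colouring of $\Gamma$ with fewer than $k$ colours, or more precisely that a representation of $\mathfrak{B}(\Gamma)$ forces $\chi(\Gamma) = \infty$ when $\Gamma$ is infinite: the $n-1$-dimensional elements code, via their indices in $\Gamma \times n$, an assignment of ``colours'' to vertices of $\Gamma$, and the ban on independent range in clause (a) of the definition of $\eta(\Gamma)$ (``$\rng(K)$ is not an independent subset'') translates, under a representation, into the statement that this colouring has no monochromatic-free... one reverses this to bound the chromatic number. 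I would reproduce this counting argument, checking that the polyadic operations play no role (they don't interfere since we are working inside the diagonal-free reduct).

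The main obstacle I anticipate is \emph{not} the polyadic part --- that is genuinely routine given Theorem \ref{it is qea} and the generation lemma plus Theorem \ref{AU} --- but rather making precise the combinatorial dictionary in part (2) between a set-theoretic representation of $\mathfrak{B}(\Gamma)$ and a finite colouring of $\Gamma$, i.e.\ extracting $\chi(\Gamma)<\infty \Rightarrow$ non-representable in the contrapositive direction. This requires carefully tracking how the relation $E$ of $\Gamma$ sits inside the atom structure $\eta(\Gamma)$ and how cylindrifications $\cyl i$ move between the defined values $K(i) \in \Gamma \times n$; the cited result \cite{hirsh} does exactly this for the $\CA$ reduct, so the honest content of my proof of (2) is: observe that representability of $\mathfrak{B}(\Gamma)$ implies representability of $\Rd_{ca}\mathfrak{B}(\Gamma)$, which by \cite{hirsh} implies $\chi(\Gamma) = \infty$ (for infinite $\Gamma$), contradicting $\chi(\Gamma) < \infty$; and conversely for (1), $\chi(\Gamma)=\infty$ gives representability of the $\CA$-reduct by \cite{hirsh}, hence of $\Rd_{df}\mathfrak{B}(\Gamma)$, hence of $\mathfrak{B}(\Gamma)$ by Theorem \ref{AU}. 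I would then close by remarking that $\mathfrak{B}(\Gamma)$ is atomic and completely additive (its $\CA$-reduct is, by \cite{hirsh}, and the $\s_{[i,j]}$ are completely additive being Boolean endomorphisms composed with $\equiv_{ij}$-images), which is what is needed for the later application to strong representability.
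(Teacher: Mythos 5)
Your proposal is correct and follows essentially the same route as the paper: both parts are reduced to the results of \cite{hirsh} on the cylindric reduct $\Rd_{ca}\B(\Gamma)$, and the only genuinely new content is the transfer between signatures. One caveat: your ``slick'' alternative for part (1) via Theorem \ref{AU} does not work as stated, since Theorem \ref{AU} is a statement about \emph{complete} representability, not ordinary representability. The paper instead takes your first route and makes it precise: it shows that the representing isomorphism $f$ of $\Rd_{ca}\B(\Gamma)$ supplied by \cite{hirsh} \emph{automatically} preserves each $\s_{[\kappa,l]}$, using the identity that for an element $x$ with a spare index $\mu\in n\setminus\Delta x$ one has $\s_{[\kappa,l]}x=\sub{\kappa}{l}x$ if $\mu\in\{\kappa,l\}$, and $\s_{[\kappa,l]}x=\sub{l}{\mu}\sub{\kappa}{l}\sub{\mu}{\kappa}\cyl{\mu}x$ otherwise; thus $\s_{[\kappa,l]}$ is term-definable from operations $f$ already preserves, and since $\s_{[\kappa,l]}$ is a completely additive Boolean endomorphism this propagates from the low-dimensional elements to all of $\B(\Gamma)$. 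This explicit computation is the honest content of part (1) that your sketch leaves as ``routine.'' Part (2) is exactly as you describe, except that the detour through the full polyadic algebra is unnecessary: generation by $n-1$ dimensional elements already yields representability of $\Rd_{ca}\B(\Gamma)$ from that of $\Rd_{df}\B(\Gamma)$, which contradicts \cite[Proposition 5.4]{hirsh} directly.
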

\begin{proof}
\begin{enumerate}
\item For the sake of brevity, we denote $\B(\Gamma)$ by $\B$. We have $\Rd_{ca}\B$ is representable (c.f., \cite{hirsh}).
Let $X=\{x\in \B:\Delta x\not=n\}$. Call $J\subseteq \B$ inductive if
$X\subseteq J$ and $J$ is closed under infinite unions and
complementation. Then $B$ is the smallest inductive
subset of $\B$. Let $f$ be an isomorphism of
$\Rd_{ca}\B$ onto a cylindric set algebra with
base $U$. Clearly, by definition, $f$ preserves ${\sf s}^{i}_{j}$ for each
$i, j<n$. It remains to show that $f$ preserves ${\sf s}_{[i,j]}$ for every
$i< j<n$. Let $i< j<n$, since ${\sf s}_{[i,j]}$ is a  Boolean endomorphism and
completely additive, it suffices to show that $f({\sf s}_{[ij]}x)={\sf s}_{[ij]}f(x)$
for all $x\in \At\B$. Let $x\in \At\B$ and $\mu\in
n\setminus\Delta x$. If $\kappa=\mu$ or $l=\mu$, say $\kappa=\mu$,
then
\begin{align*}
f({\sf s}_{[\kappa, l]}x)&=f({\sf s}_{[\kappa, l]}{\sf c}_{\kappa}x)\\
&=f({\sf s}^{\kappa}_{l}x)\\
&={\sf s}^{\kappa}_{l}f(x)\\
&={\sf s}_{[\kappa, l]}f(x).
\end{align*}
If $\mu\not\in\{\kappa, l\}$ then
\begin{align*}
f({\sf s}_{[\kappa, l]}x)&=f({\sf s}^{l}_{\mu}{\sf s}^{\kappa}_{l}{\sf s}^{\mu}_{\kappa}{\sf c}_{\mu}x)\\
&={\sf s}^{l}_{\mu}{\sf s}^{\kappa}_{l}{\sf s}^{\mu}_{\kappa}{\sf c}_{\mu}f(x)\\
&={\sf s}_{[\kappa, l]}f(x).
\end{align*}
\item Assume for contradiction that $\Rd_{df}\B$ is representable. Since $\Rd_{ca}\B$
is generated by $n-1$ dimensional elements then
$\mathfrak{Rd}_{ca}\B$ is representable. But this
contradicts \cite[Proposition 5.4]{hirsh}.
\end{enumerate}
\end{proof}
\begin{theorem}\label{el}
Let $2<n<\omega$ and $\mathcal{T}$ be any signature between $\Df_{n}$
and $\PEA_{n}$. Then the class of strongly representable atom
structures of type $\mathcal{T}$ is not elementary.
\end{theorem}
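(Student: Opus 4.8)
The plan is to transfer the Hirsch--Hodkinson argument for $\CA_n$ to every signature $\mathcal{T}$ lying between $\Df_n$ and $\PEA_n$, using the algebras $\B(\Gamma)$ constructed above as the only examples needed. The engine is the dichotomy of Theorem~\ref{chr. no.}: for a graph $\Gamma$, Theorem~\ref{it is qea} gives that $\B(\Gamma)$ is a simple $\PEA_n$, and $\B(\Gamma)$ is clearly complete, atomic and completely additive (the non-Boolean operations being set-theoretic unions over atoms), so $\B(\Gamma)=\Cm(\eta(\Gamma))$; moreover if $\chi(\Gamma)=\infty$ then $\B(\Gamma)$ is representable, and if $\Gamma$ is infinite with $\chi(\Gamma)<\infty$ then already $\Rd_{df}\B(\Gamma)$ is not representable. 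Write $\eta_{\mathcal{T}}(\Gamma)$ for the reduct of the atom structure $\eta(\Gamma)$ to the relations interpreting the operators of $\mathcal{T}$; since $\B(\Gamma)$ is complete, atomic and completely additive, $\Cm(\eta_{\mathcal{T}}(\Gamma))$ is precisely the $\mathcal{T}$-reduct $\Rd_{\mathcal{T}}\B(\Gamma)$. As elementary classes are closed under ultraproducts, it suffices to produce graphs $(\Gamma_i:i<\omega)$ and a non-principal ultrafilter $U$ on $\omega$ for which every $\eta_{\mathcal{T}}(\Gamma_i)$ is a strongly representable atom structure of type $\mathcal{T}$ while $\prod_{i/U}\eta_{\mathcal{T}}(\Gamma_i)$ is not.

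First I would import the probabilistic graphs of \cite{hirsh}: a sequence $(\Gamma_i:i<\omega)$ of countably infinite graphs with $\chi(\Gamma_i)=\infty$ for each $i$, such that $\chi(\prod_{i/U}\Gamma_i)<\infty$ for every non-principal ultrafilter $U$ on $\omega$; fix one such $U$. For each $i$, Theorems~\ref{it is qea} and \ref{chr. no.}(1) make $\B(\Gamma_i)$ a representable $\PEA_n$, so its $\mathcal{T}$-reduct $\Cm(\eta_{\mathcal{T}}(\Gamma_i))$ is representable too (a reduct of a representable algebra to a subsignature is representable); hence $\eta_{\mathcal{T}}(\Gamma_i)$ is strongly representable. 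Next I would observe that $\Gamma\mapsto\eta(\Gamma)$ is a fixed first-order interpretation (the domain of $\eta(\Gamma)$ consists of the pairs $(K,\sim)$, coded inside a finite power of $\Gamma$ together with the finitely many possible choices of the data $\sim$, $\dom(K)$ and the $n$-components of $K$, and $D_{ij},\equiv_i,\equiv_{ij}$ are given by fixed formulas), so it commutes with ultraproducts, yielding $\eta_{\mathcal{T}}(\prod_{i/U}\Gamma_i)\cong\prod_{i/U}\eta_{\mathcal{T}}(\Gamma_i)$. Finally, $\prod_{i/U}\Gamma_i$ is infinite with finite chromatic number, so by Theorem~\ref{chr. no.}(2) its $\Df$-reduct $\Rd_{df}\B(\prod_{i/U}\Gamma_i)$ is not representable; since $\Df_n\subseteq\mathcal{T}$, the $\mathcal{T}$-reduct $\Rd_{\mathcal{T}}\B(\prod_{i/U}\Gamma_i)=\Cm(\eta_{\mathcal{T}}(\prod_{i/U}\Gamma_i))$ is not representable either, so $\prod_{i/U}\eta_{\mathcal{T}}(\Gamma_i)$ is not strongly representable. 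Thus the class of strongly representable atom structures of type $\mathcal{T}$ is not closed under ultraproducts, hence not elementary.

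The only genuinely deep ingredient is the existence of the graph sequence, and that is borrowed verbatim from \cite{hirsh}; the representability dichotomy that powers the argument is already available as Theorem~\ref{chr. no.}. So the residual work is bookkeeping across signatures: one must check that $\Cm(\eta_{\mathcal{T}}(\Gamma))$ is the $\mathcal{T}$-reduct of $\B(\Gamma)$ (immediate from completeness, atomicity and complete additivity of $\B(\Gamma)$, all established above), that representability is inherited by subsignature reducts while non-representability propagates to every larger signature (both the easy directions), and that $\eta$ commutes with ultraproducts (a routine application of the fundamental theorem on ultraproducts once $\eta$ is read off as an interpretation). The mild subtlety, and the point to be most careful about, is keeping the two reductions --- on atom structures and on complex algebras --- compatible with each other and with the ultraproduct; but no new essential obstacle arises, so the proof reduces to a short deduction from Theorems~\ref{it is qea}--\ref{chr. no.} together with \cite{hirsh}.
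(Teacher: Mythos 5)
Your proposal is correct and follows essentially the same route as the paper: the dichotomy of Theorem~\ref{chr. no.} applied to the algebras $\B(\Gamma)$, a sequence of Erd\H{o}s-type graphs of infinite chromatic number and large girth whose non-principal ultraproduct is acyclic (hence $2$-colourable), the fact that $\B(\Gamma)$ is first-order interpretable in $\Gamma$ so that $\prod_{D}\B(\Gamma_{\kappa})\cong\B(\prod_{D}\Gamma_{\kappa})$, and the sandwich observation that representability passes down to subsignature reducts while failure of $\Df$-representability propagates up to every $\mathcal{T}$ between $\Df_n$ and $\PEA_n$. Your explicit bookkeeping of the reducts $\eta_{\mathcal{T}}$ is slightly more careful than the paper's write-up, but the argument is the same.
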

\begin{proof}
By Erd\"{o}s's famous 1959 Theorem \cite{Erdos}, for each finite
$\kappa$ there is a finite graph $G_{\kappa}$ with
$\chi(G_{\kappa})>\kappa$ and with no cycles of length $<\kappa$.
Let $\Gamma_{\kappa}$ be the disjoint union of the $G_{l}$ for
$l>\kappa$. Clearly, $\chi(\Gamma_{\kappa})=\infty$. So by Theorem
\ref{chr.
no.} (1), $\mathfrak{B}(\Gamma_{\kappa})=\mathfrak{B}(\Gamma_{\kappa})^{+}$ is representable.\\
\- Now let $\Gamma$ be a non-principal ultraproduct
$\prod_{D}\Gamma_{\kappa}$ for the $\Gamma_{\kappa}$. It is
certainly infinite. For $\kappa<\omega$, let $\sigma_{\kappa}$ be a
first-order sentence of the signature of the graphs stating that
there are no cycles of length less than $\kappa$. Then
$\Gamma_{l}\models\sigma_{\kappa}$ for all $l\geq\kappa$. By
Lo\'{s}'s Theorem, $\Gamma\models\sigma_{\kappa}$ for all
$\kappa$. So $\Gamma$ has no cycles, and hence by, \cite{hirsh}
Lemma 3.2, $\chi(\Gamma)\leq 2$. By Theorem \ref{chr. no.} (2),
$\mathfrak{Rd}_{df}\mathfrak{B}$ is not representable. It is easy to
show (e.g., because $\mathfrak{B}(\Gamma)$ is first-order
interpretable in $\Gamma$, for any $\Gamma$) that$$
\prod_{D}\mathfrak{B}(\Gamma_{\kappa})\cong\mathfrak{B}(\prod_{D}\Gamma_{\kappa}).$$
Combining this with the fact that: for any $n$-dimensional atom
structure $\mathcal{S}$

$\mathcal{S}$ is strongly
representable $\Longleftrightarrow$ $\mathfrak{Cm}\mathcal{S}$ is
representable,
the desired follows.
\end{proof}
Again like in the case of rainbow algebras, define the polyadic operations corresponding to transpositions
using the notation in \cite[Definition 3.6.6]{HHbook2}, in the context of defining atom structures from
classes of models by
$R_{{\sf s}_{[ij]}}=\{([f], [g]): f\circ [i,j]=g\}$. This is well defined.
In particular, we can (and will)
consider the Monk-like algebra $\M(\Gamma)$ as defined in \cite[top of p. 78]{HHbook2} as a polyadic equality
algebra.

\begin{theorem}\label{hhchapterbook}Let $\M(\Gamma)$ be the polyadic equality algebra defined above.
If $\chi(\Gamma)=\infty$, then $\M(\Gamma)$ is representable as a  polyadic
equality algebra. If $\chi(\Gamma)<\infty$,
then $\Rd_{df}\M(\Gamma)$ is not representable.
\end{theorem}
\begin{proof} Only networks are changed but the atomic game is the same, so clearly \pe\ has a \ws\
infinite (possibly transfinite)  game over $\M(\Gamma)^{\sigma}$ which is
the canonical extension of $\M(\Gamma)$ \cite[Lemma 3.6.4, Lemma 3.6.7]{HHbook2}

In using the latter lemma the proof is unaltered, in the former lemma \pe\ s strategy to win the
$G^{|{\sf Uf}(M(\Gamma))|+\omega)}\M(\Gamma)^{\sigma}$ game is exactly the same
so  that  $\M(\Gamma)^{\sigma}$ is completely representable, hence $\M(\Gamma)$
is representable as a polyadic equality algebra.

The second part follows from the fact that $\M(\Gamma)$ is generated by elements whose dimension
sets $<n$,  \cite[Theorem 5.4.29]{HMT2}.
\end{proof}

\begin{definition}
\begin{enumarab}

\item A Monks algebra $\M(\Gamma)$ is good if $\chi(\Gamma)=\infty.$

\item A Monk's algebra $\M(\Gamma)$ 
is bad if $\chi(\Gamma)<\infty.$

\end{enumarab}
\end{definition}
It is easy to construct a good Monks algebra
as an ultraproduct (limit) of bad Monk algebras. Monk's original algebras can be viewed this way.
The converse is, as illustrated above, is much harder.
It took Erdos probabilistic graphs, to get a sequence of good graphs converging to a bad one.
Recall that we defined a strongly representable $\A\in \CA_n$ to be  atomic, such that $\Cm\At\A\in \sf RCA_n$. Exdending the definition 
to any $\sf K$ between 
$\Df$ and $\PEA$, we stipulate that $\A\in \sf K$ is {\it strongly representable} if it is atomic, completely additive and $\Cm\At\A\in \sf RK_n$. 
Atomicity and complete additivity on an atomic $\sf BAO$ are first order definable.
Let $\sf SRK_n$ denote the class of strongly repesentable $\K$ algebras of dimension $n$. 
Using our modified Monk-like algebras one can obtain the result formulated in theorem \ref{el}, using the same graphs.
Also answering a question of Hodkinson's in 
\cite[p. 284]{AU} we get:

\begin{corollary}\label{proper}
For any finite $n>2$,  and any $\sf K$ between $\Df$ and $\PEA$
the class $\sf SRK_n$  is not elementary. 
In fact, for any class $\K$ between $\Df$ and $\PEA$ and any finite $n>2$,
the class $\K_s=\{\A\in \K_n: \Cm\At\A\in \sf RK_n\}$, is not elementary.
Furthermore, 
for $\sf K$ between $\Sc$ and $\PEA$, $\Nr_n\K_{\omega}\subseteq S_c\Nr_n\K_{\omega}\subseteq {\sf SRK}_n$ and all inclusions are proper.
\end{corollary}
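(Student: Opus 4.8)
The plan is to assemble the corollary from the ingredients already developed, treating the three assertions separately but with a common thread: everything is inherited from the $\sf CA$ (or $\sf PEA$) case via the reduct machinery of Theorems \ref{AU}, \ref{chr. no.}, \ref{el} and \ref{hhchapterbook}. For the non-elementarity of $\sf SRK_n$ and of $\K_s$, I would reuse the Erd\H{o}s-graph sequence $(\Gamma_\kappa)_{\kappa<\omega}$ from the proof of Theorem \ref{el} together with the modified Monk-like polyadic equality algebras $\M(\Gamma)$ of Theorem \ref{hhchapterbook}. First I would recall that for each finite $\kappa$, $\chi(\Gamma_\kappa)=\infty$, so by Theorem \ref{hhchapterbook} the algebra $\M(\Gamma_\kappa)$ — which is atomic and completely additive, since these are first-order properties checkable directly on the defining atom structure — is representable as a $\PEA_n$, and because it is generated by elements whose dimension sets are $<n$, its $\Df$-reduct (hence every reduct $\Rd_{\sf K}$ for $\sf K$ between $\Df$ and $\PEA$) is also representable; moreover $\M(\Gamma_\kappa)=\Cm\At\M(\Gamma_\kappa)$ up to the usual identification when the algebra is the complex algebra of its atom structure, so each $\Rd_{\sf K}\M(\Gamma_\kappa)$ is strongly representable. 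Then, exactly as in Theorem \ref{el}, one takes a non-principal ultraproduct $\Gamma=\prod_D\Gamma_\kappa$; by \L o\'s's theorem $\Gamma$ has no short cycles, hence (by \cite{hirsh} Lemma 3.2) $\chi(\Gamma)\le 2<\infty$, so by Theorem \ref{hhchapterbook} $\Rd_{df}\M(\Gamma)$ is not representable, hence neither is $\Rd_{\sf K}\M(\Gamma)$ for any $\sf K$ in the range, and therefore $\Cm\At$ of the ultraproduct is not in $\sf RK_n$. Using the first-order interpretability of $\M(\Gamma)$ in $\Gamma$ one gets $\prod_D\Rd_{\sf K}\M(\Gamma_\kappa)\cong \Rd_{\sf K}\M(\prod_D\Gamma_\kappa)$, so the ultraproduct of strongly representable algebras fails to be strongly representable; since complete additivity and atomicity are preserved by ultraproducts (they are first order), the witness lies in $\K_n$ and witnesses non-closure under ultraproducts of both $\sf SRK_n$ and $\K_s$. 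This proves the first two sentences, and also yields Theorem \ref{proper}'s displayed claim that $\Cm\At(\Pi_{i/U}\A_i)$ is not representable while each $\Cm\At\A_i$ is.

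For the chain of inclusions $\Nr_n\K_\omega\subseteq S_c\Nr_n\K_\omega\subseteq {\sf SRK}_n$, the first inclusion is immediate since $\A=\Nr_n\B$ gives $\A\subseteq_c\Nr_n\B$ (a neat reduct is always a complete subalgebra of itself). The second inclusion is precisely the last clause of Theorem \ref{complete} (``complete subneat reducts are strongly representable''): if $\A\in S_c\Nr_n\K_\omega$ is atomic and completely additive then \pe\ wins $F^\omega(\At\A)$ by Theorem \ref{thm:n}, hence wins all finite Lyndon games $G_k$, so $\A$ satisfies every Lyndon condition, whence any algebra on $\At\A$ — in particular $\Cm\At\A$ — is representable; thus $\A\in\sf SRK_n$. (Here I would be slightly careful to note that membership in $S_c\Nr_n\K_\omega$ already forces complete additivity, or else state the hypothesis with it included, matching the wording of Theorem \ref{complete}.) Properness of the first inclusion is given by Theorem \ref{complete}'s item that there are completely representable — hence, by the remark opening \S3.1, in $S_c\Nr_n\K_\omega$ — algebras not in $\Nr_n\K_\omega$, even countable ones via Example \ref{SL}; alternatively, the algebra of Theorem \ref{rainbow} together with Example \ref{SL} separates the classes. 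Properness of the second inclusion: the term algebra $\A$ of Theorem \ref{can} is atomic, completely additive, representable — so it is \emph{not} strongly representable since $\Rd_{sc}\Cm\At\A\notin S\Nr_n\Sc_{n+3}$ — wait, that shows $\A\notin\sf SRK_n$; what I actually want is a strongly representable algebra \emph{not} in $S_c\Nr_n\K_\omega$, which is exactly provided by the $\M(\Gamma_\kappa)$ above (strongly representable) which are not in $S_c\Nr_n\K_{n+3}$, hence not in $S_c\Nr_n\K_\omega$, because \pa\ wins the relevant game on them for suitable finite $\kappa$ (large chromatic number forcing, as in Hirsch–Hodkinson \cite{hirsh}); so $S_c\Nr_n\K_\omega\subsetneq\sf SRK_n$.

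The main obstacle I expect is bookkeeping rather than a deep new idea: one must verify that the polyadic (transposition) operations on $\M(\Gamma)$ are genuinely preserved under the representation when $\chi(\Gamma)=\infty$ — this is handled as in Theorem \ref{chr. no.}(1) and Theorem \ref{hhchapterbook}, using that the $\sf CA$-reduct is representable (by \cite{hirsh}) and the algebra is generated by low-dimensional elements so that $\Rd_{df}$ representability lifts back up — and that the first-order interpretability argument $\prod_D\M(\Gamma_\kappa)\cong\M(\prod_D\Gamma_\kappa)$ goes through verbatim for the polyadic signature, which it does since the transposition accessibility relations $R_{{\sf s}_{[ij]}}$ are defined by a first-order formula in $\Gamma$. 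The only other point requiring a line of care is that ``$\Cm\At\A\in\sf RK_n$'' for the ultraproduct really fails — but this is exactly the content of $\Rd_{df}\M(\Gamma)$ being non-representable combined with the equivalence ``$\mathcal S$ strongly representable $\iff$ $\Cm\mathcal S$ representable'' recalled in the proof of Theorem \ref{el}. Assembling these, the corollary follows.
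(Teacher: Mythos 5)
Your treatment of the first two assertions (non-elementarity of $\sf SRK_n$ and of $\K_s$) and of the inclusion chain itself follows the paper's route: Erd\H{o}s graphs $\Gamma_\kappa$ of infinite chromatic number whose non-principal ultraproduct has chromatic number $\leq 2$, the isomorphism $\prod_D\M(\Gamma_\kappa)\cong\M(\prod_D\Gamma_\kappa)$, Theorem \ref{hhchapterbook} for (non-)representability of the reducts, Example \ref{SL} for the properness of $\Nr_n\K_\omega\subseteq S_c\Nr_n\K_\omega$, and the last clause of Theorem \ref{complete} (via Theorem \ref{thm:n} and the Lyndon conditions) for $S_c\Nr_n\K_\omega\subseteq{\sf SRK}_n$. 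All of that is sound and matches the paper.

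The gap is in your witness for the strictness of the second inclusion. You propose $\M(\Gamma_\kappa)$ (with $\chi(\Gamma_\kappa)=\infty$) as a strongly representable algebra outside $S_c\Nr_n\K_\omega$, ``because \pa\ wins the relevant game on them \ldots (large chromatic number forcing).'' This has the roles of the players backwards: in the Monk-like construction it is precisely \emph{infinite} chromatic number that hands \pe\ a winning strategy --- that is why $\M(\Gamma)$ is representable in that case (see the proof of Theorem \ref{hhchapterbook}, where \pe\ wins the unbounded atomic game on the canonical extension) --- whereas \pa\ wins only when $\chi(\Gamma)<\infty$, in which case the algebra is not representable at all and hence useless as a witness lying inside $\sf SRK_n$. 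Nothing in the paper shows, and there is no reason to expect, that $\M(\Gamma_\kappa)\notin S_c\Nr_n\K_{n+3}$. The witness the paper actually uses is the rainbow algebra of Theorem \ref{rainbow}: for $\A=\Rd_{ca}\PEA_{\N^{-1},\N}$ one has $\A\notin S_c\Nr_n\CA_{n+3}\supseteq S_c\Nr_n\CA_{\omega}$ because \pa\ wins $F^{n+3}$ on its atom structure, yet \pe\ wins every finite-rounded game $G_k$ on it, so it satisfies all Lyndon conditions and is therefore strongly representable. Substituting this witness closes the gap; the rest of your argument stands.
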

\begin{proof}

For the first part and second parts, we proceed like in the proof of theorem \ref{el} using the Monk algebras $\M(\Gamma)$, 
$\Gamma$ a graph. By Erd\"{o}s's celebrated  Theorem,  for each finite
$\kappa$ there is a finite graph $\G_{\kappa}$ with
$\chi(\G_{\kappa})>\kappa$ and with no cycles of length $<\kappa$.
Let $\Gamma_{\kappa}$ be the disjoint union of the $\G_{l}$ for
$l>\kappa$. Clearly, $\chi(\Gamma_{\kappa})=\infty$
hence $\M(\Gamma_{\kappa})$ is representable.\\
- Now let $\Gamma$ be a non-principal ultraproduct
$\Pi_{D}\Gamma_{\kappa}$ of the $\Gamma_{\kappa}$'s. It is
certainly infinite. For $\kappa<\omega$, let $\sigma_{\kappa}$ be a
first-order sentence of the signature of the graphs stating that
there are no cycles of length less than $\kappa$. Then
$\Gamma_{l}\models\sigma_{\kappa}$ for all $l\geq\kappa$. By
Lo\'{s}'s Theorem, $\Gamma\models\sigma_{\kappa}$ for all
$\kappa$. So $\Gamma$ has no cycles, and hence $\chi(\Gamma)\leq 2$
hence $\mathfrak{Rd}_{df}\M(\Gamma)$ is not representable.
But $$\Pi_{D}\M(\Gamma_{\kappa})\cong\M(\Pi_{D}\Gamma_{\kappa}),=\M(\Gamma)$$
and we are done. 

So $\M(\Gamma_i)$ is a family of algebras in $\K_s$ whose ultraproduct 
is not in $\K_s$.

We prove the last part. The first inclusion follows from example \ref{SL} and the last follows from the construction in theorem \ref{rainbow}. 
Taking for example cylindric algebras we have
$\A=\Rd_{ca}\PEA_{\N^{-1}, \N}\notin S_c\Nr_n\CA_{n+3}\supseteq S_c\Nr_n\CA_{\omega}$, but 
$\A$ satisfies the Lyndon conditions since \pe\ can win $G_k$ for all
finite $k$ hence $\A$ is strongly representable.
\end{proof}

Henceforth  we carry out our discussions and  formulate our theorems for cylindric algebras.
Everything said about cylindric algebras carries over to the other cylindric-like algebras
approached in our previous investigations like $\Sc, \PA$ and $\PEA$.

However, $\Df$s does not count here, for the notion of neat reducts
for such algebras is trivial.
Lifting from atom structures, we define several classes of atomic representable algebras.
For a class $\K$ with a Boolean reduct, $\K\cap \At$ denotes the class of atomic algebras in $\K$; the former is elementary iff the
latter is.

Fix $n>2$ finite. ${\sf CRA_n}$ denotes the class of completely representable algebras of dimension $n$.
Let ${\sf SRCA_n}$ be the class of strongly representable atomic algebras of dimension $n$,
$\A\in {\sf SRCA_n}$ iff $\Cm\At\A\in \RCA_n$.  ${\sf WRCA_n}$ denotes the class of weakly representable algebras of dimension $n$,
and this is just $\RCA_n\cap \At$.  We  have the following strict inclusions lifting them up from atom structures \cite{HHbook2} (*):
$${\sf CRA}_n\subset {\sf LCA_n}\subset {\sf SRCA_n}\subset {\sf WCRA}_n$$

The second  and fourth classes are elementary but not finitely axiomatizable, bad Monk  algebras converging to a good one (Monk's original algebras
are like that),
can witness this, while ${\sf SRCA_n}$ is not closed under both ultraroots and ultraproducts,
good Monks algebras converging to a bad one witnesses
this. The algebra $\Rd_{ca}\PEA_{\N^{-1}, \N}$  witness that ${\sf CRA_n}$ is not elementary,
since it is not completely representable (in fact it is not in $S_c\Nr_n\CA_{n+3}$), but is elementary equivalent to (countable atomic) 
algebra
that is. From this we readily conclude that ${\sf CRCA_n}$ is properly contained in ${\sf LCA}_n$.

For a cylindric algebra atom structure $\F$ the first order algebra over $\F$ is the subalgebra
of $\Cm\F$ consisting of all sets of atoms that are first order
definable with parameters from $S$. ${\sf FOCA_n}$ denotes the class of atomic such algebras of dimension $n$.

This class is strictly bigger than ${\sf SRCA_n}$.
Indeed, let $\A$ be any the  rainbow term algebra  obtained by blowing up and blurring the  finite rainbow algebra
$\CA_{n+1, n}$ proving that
$S\Nr_{n}\CA_{n+3}$ is not atom canonical.
This algebra was defined using first order formulas in the rainbow signature
(the latter is first order since we had only
finitely many greens).  Though the usual semantics was perturbed, first order logic did not see the relativization, only
infinitary formulas saw it,
and thats why the complex algebra could not be represented.
These examples all show that ${\sf SRCA_n}$ is properly contained in ${\sf FOCA_n}.$
Another way to view this is to notice that ${\sf FOCA_n}$ is elementary, that ${\sf SRCA_n}\subseteq {\sf FOCA_n}$, but
${\sf SRCA_n}$ is not elementary.

Last inclusion follows from the following  $\sf RA$ to $\CA$ adaptation of an example of Hirsch and Hodkinson
which we use to show that
${\sf FOCA_n}\subset  {\sf WRCA_n}$. This is not at all obvious because they are both elementary.
\begin{example}\label{fo}
Take an $\omega$ copy of the  $n$ element graph with nodes $\{1,2,\ldots, n\}$ and edges
$1\to 2\to\ldots\to n$. Then of course $\chi(\Gamma)<\infty$. Now  $\Gamma$  has an $n$ first order definable colouring.
Since $\M(\Gamma)$ as defined above and in \cite[top of p. 78]{HHbook2} is not representable, then the algebra of first order
definable sets is also not representable because $\Gamma$ is first order interpretable in
$\rho(\Gamma)$, the atom structure constructed from $\Gamma$ as defined in \cite{HHbook2}.
However, it can be shown that the term algebra is representable. (This is not so easy to prove).
\end{example}

\begin{theorem}
Let $n>2$ be finite. Then we have the following inclusions (note that $\At$ commutes with ${\bf UpUr})$:
$$\Nr_n\CA_{\omega}\cap \At\subset {\bf UpUr}\Nr_n\CA_{\omega}\cap \At$$
$$\subset {\bf Up Ur}\bold S_c\Nr_n\CA_{\omega}\cap \At={\bf UpUr}{\sf CRA_n}={\sf LCA}_n\subset {\sf SRCA_n}$$
$$\subset
{\bf Up}{\sf SRCA_n}={\bf Ur}{\sf  SRCA_n}={\bf UpUr}{\sf SRCA}_n\subseteq {\sf FOCA_n}$$
$$\subset S\Nr_n\CA_{\omega}\cap \At={\sf WRCA_n}= \RCA_n\cap \At.$$
\end{theorem}
\begin{proof}
The majority of  inclusions, and indeed their strictness,
can be distilled without much difficulty from our previous work.
It is known that ${\bf Up}{\sf SRCA_n}={\bf Ur}{\sf  SRCA_n}$ \cite{hirsh}. 
To show that the elementary closure of $\sf CRA_n$ is $\sf LCA_n$, let $\A\in \sf LCA_n$, 
then \pe\ can win the $k$ rounded atomic game $G_k$ for all $k\in \omega$ on $\At\A$. 
Using ultrapowers and an elementary 
chain argument one gets a countable (atomic) $\C$ such that $\C\equiv \A$, and \pe\ can win $G$, the $\omega$ rounded atomic game, 
on $\At\C$, hence $\C$ being countable is completely representable. 
It follows that $\A\in {\bf UrUp}\{\C\}\subseteq {\bf UrUp}{\sf CRA_n},$ 
and we are done.

The first  inclusion is witnessed by a slight modification
of the algebra $\B$ used in the proof of \cite[Theorem 5.1.4]{Sayedneat},
showing that for any pair of ordinals $1<n<m\cap \omega$, the class $\Nr_n\CA_m$ is not elementary.

In the constructed model $\sf M$ \cite[lemma 5.1.3]{Sayedneat} on which (using the notation in {\it op.cit}),
the two algebras $\A$ and $\B$  are based,
one requires (the stronger) that the interpretation of the $3$ ary relations symbols in the signature
in $\sf M$ are {\it disjoint} not only distinct as above.
Atomicity of $\B$ follows immediately,  since its Boolean reduct  is now a product of atomic algebras.
For $n=3$ these are denoted by $\A_u$ except for one countable component $\B_{Id}$, $u\in {}^33\sim \{Id\}$, cf. \cite{Sayedneat}
p.113-114. Second inclusion follows from example \ref{SL}. Third follows from theorem \ref{can},  
and fourth  inclusion follows from theorem \ref{el}. 
Last one follows from example \ref{fo}.
\end{proof}

\end{document}